\documentclass[11pt]{article}
\usepackage{graphicx} 
\usepackage{amsmath,amsthm,amssymb}                
\usepackage{graphicx, color}
\usepackage{amssymb}
\usepackage{epstopdf}
\usepackage{pdfsync}
\usepackage{makeidx}

\usepackage{geometry} 
\newtheorem{definition}{Definition}[section]
\newtheorem{lemma}[definition]{Lemma}
\newtheorem{theorem}[definition]{Theorem}
\newtheorem{proposition}[definition]{Proposition}

\newtheorem{remark}[definition]{Remark}

\def\e{\varepsilon}
\newcommand{\weak}{\rightharpoonup}

\title{Dimension reduction of fractional Sobolev seminorms\\ on thin domains}
\author{Andrea Braides\footnote{Department of Mathematics, University of Rome Tor Vergata, via della ricerca scientifica 1, 00133 Rome, Italy} , Andrea Pinamonti\footnote{Department of Mathematics, University of Trento, via Sommarive 14, 38123 Povo, Italy} , and Margherita Solci\footnote{DADU, Universit\`a di Sassari, piazza Duomo 6, 07041 Alghero, Italy}}
\date{}

\begin{document}
\maketitle

\begin{abstract}
We study the asymptotic behaviour of  Gagliardo seminorms in $H^s$ defined on thin films $\Omega_\e=\omega\times(0,\e)$. The first relevant order is $\e^{1-2s}$, at which the corresponding limit captures the vertical fractional oscillations through one-dimensional sections. The second relevant order produces dimension-reduction regimes that undergo a qualitative transition at the critical exponent $s=\tfrac12$. For $s<\tfrac12$, the dominant contribution is driven by interactions at finite planar distance, and the dimension-reduction scale is $\e^2$. In this regime, the limit is a lower-dimensional \emph{fractional} energy with an effective gain of $\tfrac12$ in the differentiability index. At the critical exponent $s=1/2$, the dimension-reduction scale is $\e^{2}|\log\e|$, and the limit is {\em local}, with dominant interactions at scales between $\e$ and $1$, producing a Dirichlet-type limit on $\omega$. For $s>\tfrac12$, the dominant contribution is instead driven by interactions at distances of order $\varepsilon$, the dimension-reduction scale is $\e^{3-2s}$, and the second-order $\Gamma$-limit is still local. We also study the case $s=s_\e\to 1^-$, showing a Bourgain--Brezis--Mironescu-type result.

{\bf Keywords:}
fractional Sobolev spaces,
dimension reduction,
thin films,
nonlocal energies,
Gagliardo seminorm,
Bourgain--Brezis--Mironescu limit.

{\bf MSC codes:}
49J45, 46E35, 35R11, 49J53, 74K35
\end{abstract}

\section{Introduction}
Dimension-reduction problems arise naturally in the study of variational models defined on thin structures. In many physical situations one considers a family of domains whose thickness tends to zero and investigates the corresponding asymptotic behaviour of the associated energy functionals. Classical examples include thin films, membranes, and plates in nonlinear elasticity, where three-dimensional models give rise to effective lower-dimensional theories. The rigorous derivation of such reduced models has been extensively studied using $\Gamma$-convergence methods, starting from the seminal work of Le Dret and Raoult \cite{LDR} and later developments summarized e.g.~in the references \cite{GCB,Handbook,BFF,MR2210909,MR1916989} .

Most results in this area concern local energies, typically involving gradients. The simplest example is the Dirichlet integral
\begin{equation}\label{intro-1}
\int_{\Omega_\e} |\nabla u|^2dx.
\end{equation}
defined on thin domains
$$
\Omega_\e=\omega\times(0,\e)\subset \mathbb R^d,
$$ where $\omega$ is a (bounded) open set in $\mathbb R^{d-1}$ and $\e$ represents the thickness of the film. In this classical setting the asymptotic analysis reveals a separation between derivatives along the thin direction and derivatives along the planar variables, leading to effective lower-dimensional energies defined on $\omega$. This is done through an asymptotic analysis of functionals \eqref{intro-1} as $\e\to 0$. In order to provide a common analytical framework, it is customary to scale the variable $u$ to a function $v\in H^1(\omega\times (0,1))$ as follows
\begin{equation}\label{intro-2}
v(x',x_d)=u(x',\e x_d), \qquad x'\in\omega,\  x_d\in(0,1),\ \varepsilon>0.
\end{equation}
Here the prime indicates quantities depending on (or operations with respect to) the variable $x'=(x_1,\ldots,x_d)$. This change of variables allows one to view functionals \eqref{intro-1} as defined on $H^1(\omega\times (0,1))$. Properly scaling the energies ({\em dimension-reduction scaling}; for the functional \eqref{intro-1} simply dividing by $\e$), it is shown that their limits are actually finite on functions depending only on $x'$, thus understood as functions in $H^1(\omega)$.

In contrast, much less is known when the energy is nonlocal. Nonlocal variational energies arise naturally in many contexts, including anomalous diffusion, peridynamics, and fractional phase-transition models. In particular, fractional Sobolev seminorms play a fundamental role in the theory of fractional Laplacians and nonlocal operators. A comprehensive overview of fractional Sobolev spaces can be found in the survey of Di Nezza, Palatucci, and Valdinoci \cite{DPV}. A key structural result is the celebrated formula of Bourgain, Brezis, and Mironescu \cite{BBM}, which shows that fractional seminorms converge to the Dirichlet energy as the fractional exponent tends to one. This paper had a major impact and it was later generalized in many directions (see e.g. \cite{ponce,MR1942116,MR4275122,MR3556344,MR4682567, MR5012373,Davo, MR4999552,MR4788002}) and to different functional spaces (see e.g. \cite{MR4525722, MR1942130,MR2832587,MR3132740, ponce}).

Despite the extensive literature on fractional Sobolev spaces and nonlocal energies, the interaction between nonlocal kernels and geometric constraints remains comparatively unexplored. In particular, the asymptotic behaviour of fractional energies on thin domains presents new difficulties. Unlike the classical Sobolev case, fractional seminorms couple interactions across all length scales, and therefore do not naturally decouple into derivatives along different directions. As a consequence, the derivation of effective lower-dimensional models requires a more delicate analysis.

The aim of this paper is to study the asymptotic behaviour of fractional Sobolev seminorms on thin domains, of the form
\begin{equation}\label{intro-5}
\lfloor u\rfloor^2_{s}(\Omega_{\epsilon})=\int_{\Omega_\e\times\Omega_\e}\frac{(u(x)-u(y))^2}{|x-y|^{d+2s}}dx dy,
\end{equation}
with $s\in(0,1)$. Our goal is to identify the relevant scaling regimes and to compute the
corresponding $\Gamma$-limits of the associated energies. A key feature of the analysis is that the asymptotic behaviour depends crucially on the integrability of the interaction kernel, which is governed by the fractional exponent.

Our results reveal the existence of two qualitatively different dimension-reduction
re\-gimes separated by the critical exponent $s = 1/2$. Before analyzing the dimension-reduction
 process, we preliminarily highlight the first relevant scaling, which
is of order $\e^{1-2s}$. At this scaling, the dominant contribution to the energy arises from oscillations along the thin direction. The corresponding $\Gamma$-limit captures the fractional oscillations of the rescaled functions along one-dimensional vertical sections. In particular, the limit energy can be expressed as the integral over $\omega$ of one-dimensional fractional seminorms.

At the next scaling order, the behaviour depends on the value of $s$. When $s<1/2$, long-
range interactions dominate and the resulting effective energy remains nonlocal. In this
regime the limit functional corresponds to a fractional seminorm defined on the planar
domain $\omega$, with an {\em effective gain of $1/2$ in the differentiability index}. When $s>1/2$
instead, the dominant interactions occur at small scales of order $\e$, and the effective
energy becomes local. In this case the $\Gamma$-limit reduces to a Dirichlet-type functional on
$\omega$. The exponent $s=1/2$ represents a critical threshold at which the two mechanisms
balance and a logarithmic correction appears in the scaling. 

These results show that the dimension-reduction behaviour of fractional energies is
governed by the competition between the thickness of the domain and the
characteristic interaction length scale of the kernel. In particular, the analysis reveals a
transition from nonlocal effective energies to local ones depending on the integrability
of the kernel.

Finally, we also consider the regime in which the fractional exponent approaches one
simultaneously with the thickness parameter. In this case, we recover, through a suitable
renormalization, a Bourgain--Brezis--Mironescu-type limit that connects the present
analysis with the classical thin-film asymptotics for Dirichlet energies.

The results are complemented by compactness results, which suggest the relevant dimension-reduction convergence of the scaled functions $v$ for the computation of the $\Gamma$-limits.
We briefly summarize (and slightly simplify for the sake of exposition) the results below:

\smallskip
\noindent{\bf Vertical slicing (Theorem \ref{fulldim})}
{\em Let $s\in(0, 1)$, and define 
$$
E^s_\e(u)=\frac1{\e^{1-2s}}\lfloor u\rfloor^2_{s}(\Omega_\e).
$$
Then we have 
\begin{equation}
    \Gamma\hbox{-}\lim_{\e\to 0} E^s_\e(v)=C_{s,d}\int_{\omega}\int_0^1\int_0^1\frac{|v(x^\prime, x_d)-v(x^\prime, y_d)|^2}{|x_d-y_d|^{1+2s}}\, dx_d\, dy_d\, dx^\prime.
  \end{equation}
The $\Gamma$-limit is taken with respect to the weak convergence in $L^2(\omega \times (0,1))$ of the rescaled functions $v$.}

This result, in particular, shows that the limit energy describes the fractional oscillations of the function along one-dimensional vertical sections.
Remarkably, the explicit constant $C_{s,d}$,
arising from the change of variables that allows for a Fubini-type argument, can be written in terms of a \emph{hypergeometric} function, which itself can be expressed in terms of Euler's Gamma function.
The result can also be extended to varying $s_\e$, taking into account the one-dimensional version of the Bourgain-Brezis-Mironescu result if $s_\e\to 1$.

At the next order of the asymptotic expansion, the behaviour changes depending on the
value of the fractional exponent $s$. The following theorem characterizes the resulting
dimension-reduction regimes.

\smallskip
\noindent{\bf Dimension Reduction (Theorems  
\ref{drconv}, \ref{Gammaths1}, and \ref{Gammaths0})}

\smallskip
{\em 
{\bf 1. Low-integrability regimes.} Let $s<1/2$. At scale $\e^2$ we have
 \begin{equation}\label{drT-1}
    \Gamma\hbox{-}\lim_{\e\to 0} \frac{1}{\e^{2}}\lfloor u\rfloor^2_{s}(\Omega_\e)=\int_\omega\int_\omega 
\frac{|v(x')-v(y')|^2}{|x'-y'|^{d+2s}}\, dx^\prime  dy^\prime=\lfloor v\rfloor^2_{s+\frac12}(\omega).
  \end{equation}
  This limit is calculated with respect to the weak topology in $L^2(\omega\times(0,1))$.
  
\smallskip  {\bf 2. Critical regime.} For $s\ge1/2$  we have an improved dimension-reduction coerciveness result in the strong topology of $L^1_{\rm loc}(\omega\times(0,1))$. With respect to this convergence, for $s=1/2$ the correct scaling is $\e^2|\log\e|$, and we have
\begin{equation}\label{drT-2}
    \Gamma\hbox{-}\lim_{\e\to 0} \frac{1}{\e^{2}|\log\e|}\lfloor u\rfloor^2_{1/2}(\Omega_\e)=\frac{\sigma_{d-1}}{2(d-1)}\int_\omega
|\nabla' v|^2dx'.
  \end{equation}
}

\smallskip
{\em 
{\bf 3. High-integrability regimes.} Let $s>1/2$. At scale $\e^{3-2s}$
we have
\begin{equation}\label{drT-3}
    \Gamma\hbox{-}\lim_{\e\to 0} \frac{1}{\e^{3-2s}}\lfloor u\rfloor^2_{s}(\Omega_\e)=\frac{1}{1-s} K_{s,d}\int_{\omega} |\nabla' v|^2 dx'.
  \end{equation} }
  
   The coefficient $K_{s,d}$
in \eqref{drT-3} can also be expressed as a hypergeometric integral, diverging as $s\to1/2+$. Note that by the Bourgain-Brezis-Mironescu result, also the functional on the right-hand side of \eqref{drT-1} diverges as $s\to1/2^-$.

\smallskip
The dimension-reduction regimes can also be analyzed for varying $s_\e$. In particular for $s_\e\to 1^-$ we recover an analog of the Bourgain-Brezis-Mironescu result, which implies the following {\em separation of scale} phenomenon at both the first and the second scaling.

\medskip
\noindent{\bf Bourgain--Brezis-Mironescu-type Expansion (Theorem \ref{gammaexp}).}
{\em
Let $\e^{1-s_\e}\to 1$; that is, let
$$
1-s_\e<\!<\frac1{|\log\e|}.
$$
Then, we have
$$
(1-s_\e)\lfloor u\rfloor^2_{s_\e}(\Omega_\e)\ \buildrel\Gamma\over =\ \frac1\e\frac{
\sigma_{d}}{2d}\int_{\omega}\int_0^1\Big|\frac{\partial v}{\partial x_d}\Big|^2 dx_d\, dx^\prime
+\e\frac{\sigma_{d}}{2d}\int_{\omega} |\nabla' v|^2 dx'
+o(\e)
$$
in the sense of $\Gamma$-expansions} \cite{BT}.

\smallskip
These results show that the dimension-reduction behaviour of fractional Sobolev
energies depends crucially on the integrability of the kernel. In particular, the critical exponent $s=1/2$ separates regimes in which the effective lower-dimensional energy is nonlocal from those in which it becomes local. To the best of our knowledge, this is the first systematic asymptotic analysis describing the dimension reduction of fractional Sobolev energies in thin domains, including the critical transition at $s=1/2$.

\bigskip
The paper is organized as follows. In Section 2 we introduce the notation and recall basic facts on fractional Sobolev spaces and dimension-reduction convergence. Section 3 is devoted to the analysis of the first scaling regime and the corresponding $\Gamma$-limit in
the thin direction. In Section 4 we study the dimension-reduction regimes, identify the relevant scaling laws depending on $s$, and compute the associated $\Gamma$-limits. Finally, in Section 5 we reformulate the results in terms of $\Gamma$-expansions, highlighting the analogy with the classical asymptotic expansions for local thin-film energies.

\bigskip
\noindent{\bf Acknowledgments.} 
This work has been inspired by a suggestion of Fran\c cois Murat.  A.B. is partially supported by the MIUR Excellence Department Project 2023-2027 MatMod@TOV awarded to the Department of Mathematics, University of Rome Tor Vergata. A.P. is supported by the University of Trento, the MIUR-PRIN 2022 Project \emph{Regularity problems in sub-Riemannian structures}  Project code: 2022F4F2LH and the INdAM-GNAMPA 2025 Project \emph{Structure of sub-Riemannian hypersurfaces in Heisenberg groups}, CUP ES324001950001. M.S. is partially supported by the Department of Architecture, Design and Planning of the University of Sassari in the framework of the projects {\em DM737/2021 and DM737/2022-23}. 
The authors are members of GNAMPA, INdAM.

\section{Notation and preliminaries}\label{notation}

If $x\in\mathbb R^d$, then we write $x'=(x_1,\ldots, x_{d-1})$, and use the notation $x=(x',x_d)$.
We also write 
$$
\nabla'u=\Big(\frac{\partial u}{\partial x_1},\ldots, \frac{\partial u}{\partial x_{d-1}}\Big).
$$
With a slight abuse of notation, this is done both when $u=u(x')$ and $u=u(x)$. In the second case, we also write the usual gradient as $\nabla u=(\nabla'u, \tfrac{\partial u}{\partial x_d})$.

The symbol
\begin{equation}\label{defsigma}
    \sigma_{k}=\mathcal H^{k-1}(S^{k-1})
\end{equation} 
denotes the surface measure of the unit sphere in $\mathbb R^k$ (in our case, it will be either  $k=d$ or $k=d-1$).

We use the notation $a\sim b$, meaning that we consider indifferently the  quantities $a$ and $b$ in the argument considered; this convention will be used throughout the paper to illustrate approximation arguments.

\subsection{Dimension-reduction convergence}
In the following, $\omega$ denotes a bounded connected open subset of $\mathbb R^{d-1}$ with Lipschitz boundary, and for all $\e>0$
we define the {\em thin film}
$$
\Omega_\e=\omega\times (0,\e)\subset \mathbb R^d.
$$

In order to define a notion of convergence for functions $u_\varepsilon$ defined on $\Omega_\varepsilon$ as $\varepsilon \to 0$, we rescale them onto a fixed domain. To this end, we define functions $v_\varepsilon \in L^1(\Omega)$, with $\Omega = \omega \times (0,1)$, by
\begin{equation}\label{scaled-ve}
v_\varepsilon(x) = v_\varepsilon(x', x_d) := u_\varepsilon(x', \varepsilon x_d).
\end{equation}
Besides strong and weak convergence for $v_\varepsilon$ in the spaces $L^p(\omega \times (0,1))$, we will also use a notion of convergence for functions $u_\varepsilon \in L^1(\Omega_\varepsilon)$ toward a dimensionally reduced function $u \in L^1(\omega)$. It is customary to proceed as follows \cite{LDR,BFF}. 

\begin{definition}[Dimension-reduction convergence]\label{drco}
We say that $u_\e\in L^p(\Omega_\e)$ {\em (di\-men\-sion-reduction) converge in $L^p$} to $u\in L^p(\omega)$ as $\e\to0$, and we use the notation $u_\e\buildrel{\hbox {\rm\scriptsize DR }\, \, }\over{\longrightarrow} u$ in $L^p$, if:  

{\rm1)} the scaled functions $v_\e$ defined by \eqref{scaled-ve} converge in $L^p(\Omega)$ to some $v=v(x')$; that is, to some $v$  is independent of $x_d$; 

{\rm2)} the limit $u\in L^p(\omega)$ is defined by the equality $u(x')=v(x')$. 

\smallskip 
\noindent We will use a similar terminology if the scaled functions $v_\e$ converge to $v$ in a different topology; in particular, in $L^1_{\rm loc}$ or in $L^p$-weak. Moreover, we will often use equivalently $v$ and $u$ to denote the limit, even though they formally belong to different function spaces.
\end{definition}

In the case of thin films modeled by local energies in Sobolev spaces, this convergence is ensured by the following compactness result (see, for example, \cite{GCB}, Section 14.1). The proof relies on a simple application of Fubini’s theorem, which allows one to decouple the “vertical” and “planar” components of the gradient. 

\begin{lemma}[Dimension-reduction compactness for local functionals]\label{local-lemma}
Let $\{u_\e\}$ be a sequence with $u_\e\in H^1(\Omega_\e)$, and suppose that 
$$
\sup_\e\frac1\e\int_{\Omega_\e} |\nabla u_\e|^2dx<+\infty.
$$
Then, up to the addition of constants, $u_\e$ is precompact; that is, there exists $c_\e$ such that $u_\e+c_\e$ is precompact, with respect to the di\-men\-sion-reduction convergence in $L^2$, the limit  $u$ belongs to $H^1(\omega)$ and
$$
\int_\omega |\nabla'u|^2dx'\le \liminf_{\e\to 0}\frac1\e\int_{\Omega_\e} |\nabla u_\e|^2dx.
$$ 
\end{lemma}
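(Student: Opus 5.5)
The plan is to rescale to the fixed domain $\Omega=\omega\times(0,1)$ and use Fubini to split the energy into its vertical and planar parts. For $v_\e(x',x_d)=u_\e(x',\e x_d)$ we have $\nabla' v_\e(x)=\nabla' u_\e(x',\e x_d)$ and $\partial_{x_d}v_\e(x)=\e\,\partial_{x_d}u_\e(x',\e x_d)$. The change of variables then gives
$$
\frac1\e\int_{\Omega_\e}|\nabla u_\e|^2dx=\int_{\Omega}\Big(|\nabla' v_\e|^2+\frac1{\e^2}\Big|\frac{\partial v_\e}{\partial x_d}\Big|^2\Big)dx .
$$
The hypothesis therefore yields two bounds:
\begin{itemize}
\item $\sup_\e\|\nabla v_\e\|_{L^2(\Omega)}<+\infty$;
\item $\|\partial_{x_d}v_\e\|_{L^2(\Omega)}^2\le C\e^2\to0$.
\end{itemize}

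\textbf{Compactness.} We choose $c_\e=-\frac1{|\Omega|}\int_\Omega v_\e\,dx$, so that $v_\e+c_\e$ has zero mean. Since $\omega$ is bounded, connected and Lipschitz, $\Omega$ is a bounded connected Lipschitz domain. The Poincar\'e--Wirtinger inequality therefore bounds $v_\e+c_\e$ in $H^1(\Omega)$. By Rellich's theorem, up to subsequences, $v_\e+c_\e\to v$ strongly in $L^2(\Omega)$ and weakly in $H^1(\Omega)$. This compact embedding is the only point where the regularity of $\omega$ enters, and it is the step needing care.

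\textbf{The limit is independent of $x_d$.} Weak lower semicontinuity gives $\|\partial_{x_d}v\|_{L^2}\le\liminf_\e\|\partial_{x_d}v_\e\|_{L^2}=0$, so $\partial_{x_d}v=0$ in $\mathcal D'(\Omega)$. Because each vertical section $(0,1)$ is connected, $v(x',x_d)=u(x')$ for a.e.\ $x$. Applying Fubini to $v\in H^1(\Omega)$ gives $u\in H^1(\omega)$, with $\nabla' v(x',x_d)=\nabla' u(x')$. This is exactly dimension-reduction convergence in the sense of Definition \ref{drco}.

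\textbf{Liminf inequality.} We drop the nonnegative vertical term and use weak lower semicontinuity of the $L^2$ norm for $\nabla' v_\e\rightharpoonup\nabla' v$:
$$
\int_\omega|\nabla' u|^2dx'=\int_\Omega|\nabla' v|^2dx\le\liminf_{\e\to0}\int_\Omega|\nabla' v_\e|^2dx\le\liminf_{\e\to0}\frac1\e\int_{\Omega_\e}|\nabla u_\e|^2dx .
$$
Here the first equality holds because the $x_d$-interval has unit length.

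Finally, we may first pass to a subsequence realizing the $\liminf$ of the energies. The inequality then holds along any dimension-reduction convergent subsequence, which completes the proof.
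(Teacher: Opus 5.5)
Your proposal is correct and follows the route the paper indicates. The paper defers to \cite{GCB}, Section 14.1, and later records the identity $\frac1\e\int_{\Omega_\e}|\nabla u_\e|^2dx=\int_{\omega\times(0,1)}|\nabla' v_\e|^2dx+\frac1{\e^2}\int_{\omega\times(0,1)}|\partial_{x_d}v_\e|^2dx$, which gives $H^1$-compactness of the rescaled functions together with $\partial_{x_d}v_\e\to0$; your Poincar\'e--Wirtinger/Rellich step, the identification of the $x_d$-independent limit, and the weak lower semicontinuity of the planar term are exactly the details that sketch leaves implicit.
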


\subsection{Fractional Sobolev spaces}
If $\Omega\subset \mathbb R^d$ is a bounded connected open set, the fractional Sobolev spaces $H^s(\Omega)$ are defined as the set of functions in $L^2(\Omega)$ such that their {\em Gagliardo seminorm}
$$
\lfloor u
\rfloor_s(\Omega)=\bigg(\int_{\Omega\times \Omega} \frac{|u(x)-u(y)|^2}{|x-y|^{d+2s}} dx\,dy\bigg)^{1/2}
$$
is finite (see \cite{leofrac,DPV}, to which we refer for notation and results). 

\smallskip
The space $H^1(\Omega)$ is a singular limit of the spaces $H^s(\Omega)$ in the following sense.
\begin{theorem}[Bourgain--Brezis--Mironescu limit theorem \cite{BBM,ponce}]\label{BBM-thm}
If $u_s$ is a family of functions with $u_s\in H^s(\Omega)$ and $\sup_s(1-s)\lfloor u\rfloor^2_{H^s(\Omega)}<+\infty$,
then, up to subsequences and the addition of constants, $u_s$ converges in $L^2(\Omega)$ as $s\to 1$ to a function $u\in H^1(\Omega)$. Furthermore, for $u\in H^1(\Omega)$ we have
$$
\Gamma\hbox{-}\lim_{s\to 1} (1-s)\int_{\Omega\times \Omega} \frac{|u(x)-u(y)|^2}{|x-y|^{d+2s}} dx\,dy=\frac{\sigma_d}{2d}\int_\Omega|\nabla u|^2dx,
$$
with $\sigma_d$ defined in \eqref{defsigma}. Furthermore, the limit holds also pointwise.
\end{theorem}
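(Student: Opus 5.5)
The plan is to establish the result in three stages: first the pointwise limit for smooth functions, then a uniform upper bound that extends it to all of $H^1(\Omega)$, and finally compactness together with the $\Gamma$-liminf inequality. Throughout I will use that $\Omega$ is a Lipschitz domain, so that $H^1(\Omega)$ functions admit extensions $Eu\in H^1(\mathbb R^d)$ with $\|\nabla Eu\|_{L^2}\le C\|u\|_{H^1}$.

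\emph{Pointwise limit for smooth $u$.} Take $u\in C^2(\overline\Omega)$ and fix $\delta>0$. I split the double integral into the region $|x-y|\ge\delta$ and the region $|x-y|<\delta$.
\begin{itemize}
\item On $|x-y|\ge\delta$ the kernel is bounded by $\delta^{-d-2s}$. This part is therefore $O(\delta^{-d-2s}\|u\|_{L^2}^2)$, and after multiplication by $(1-s)$ it tends to $0$.
\item On $|x-y|<\delta$, write $y=x+h$ and Taylor expand: $u(x+h)-u(x)=\nabla u(x)\cdot h+O(|h|^2)$.
\end{itemize}
For the leading term I pass to polar coordinates $h=r\theta$ and use $\int_{S^{d-1}}(e\cdot\theta)^2\,d\mathcal H^{d-1}=\sigma_d/d$ for unit $e$. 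This gives
$$
(1-s)\int_\Omega|\nabla u|^2\,\frac{\sigma_d}{d}\int_0^\delta r^{1-2s}\,dr=\frac{\sigma_d}{d}\,\frac{(1-s)\,\delta^{2-2s}}{2-2s}\int_\Omega|\nabla u|^2\to\frac{\sigma_d}{2d}\int_\Omega|\nabla u|^2 .
$$
The remainder is controlled by $(1-s)\int_0^\delta r^{3-2s}dr=O(\delta^2)$. The boundary layer, where $x+h\notin\Omega$, is handled in two ways. From above it is dominated by the whole-space integral of $Eu$. From below it only discards points within $\delta$ of $\partial\Omega$, a set of vanishing measure as $\delta\to0$. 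Letting $\delta\to0$ after $s\to1$ gives the pointwise limit.

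\emph{Extension to $H^1$ and the limsup.} For $u\in H^1$ I use the difference-quotient bound $\int|Eu(x+h)-Eu(x)|^2dx\le|h|^2\|\nabla Eu\|^2$, combined with the trivial bound $4\|Eu\|^2$ for $|h|>1$. This yields the uniform estimate $(1-s)\lfloor u\rfloor_s^2(\Omega)\le C\|u\|_{H^1(\Omega)}^2$. The seminorms $\big((1-s)\lfloor\cdot\rfloor^2_s\big)^{1/2}$ are therefore equi-Lipschitz in the $H^1$ norm, and density of $C^2(\overline\Omega)$ transfers the pointwise limit to all of $H^1$. The $\Gamma$-limsup then follows with the constant recovery sequence.

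\emph{Compactness and liminf (the main obstacle).} Here I would first mollify: set $u_{s}^\eta=u_s*\rho_\eta$ on $\Omega'\Subset\Omega$. By Jensen's inequality, $\lfloor u_s^\eta\rfloor_s(\Omega')\le\lfloor u_s\rfloor_s(\Omega)$ for $\eta<\mathrm{dist}(\Omega',\partial\Omega)$. The key technical step is Ponce's estimate $\|u-u*\rho_\eta\|_{L^2(\Omega')}^2\le C\eta^{2}\cdot(1-s)\lfloor u\rfloor_s^2$, uniformly for $s$ near $1$. I would prove it by averaging the differences with radial weights and comparing the kernels $(1-s)|h|^{-d-2s}$ with the normalized approximate identities in the style of the BBM argument. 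Once this holds:
\begin{itemize}
\item the mollified functions are bounded in $C^1(\Omega')$ up to constants, so Arzel\`a–Ascoli gives precompactness;
\item the estimate transfers this precompactness back to $u_s$ in $L^2_{\rm loc}$;
\item a Lipschitz-domain exhaustion argument, via Poincar\'e-type control near the boundary, upgrades it to $L^2(\Omega)$.
\end{itemize}
For the liminf, I apply the smooth pointwise computation to $u_s^\eta$. The second-order Taylor remainder is uniformly controlled by $\|D^2u_s^\eta\|\lesssim\eta^{-2}$, so $\liminf(1-s)\lfloor u_s\rfloor_s^2\ge\frac{\sigma_d}{2d}\int_{\Omega'}|\nabla(u*\rho_\eta)|^2$. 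Letting $\eta\to0$ and $\Omega'\uparrow\Omega$, lower semicontinuity gives the claim, and also shows that the limit $u$ belongs to $H^1(\Omega)$.
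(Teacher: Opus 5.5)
The paper does not prove this statement. It quotes it from Bourgain--Brezis--Mironescu and Ponce, and your outline is essentially the argument of those references, in particular Ponce's. It consists of three pieces:
\begin{itemize}
\item the Taylor computation for $C^2$ functions;
\item the bound $(1-s)\lfloor u\rfloor_s^2(\Omega)\le C\|u\|^2_{H^1(\Omega)}$ via an extension, which makes $\big((1-s)\lfloor\cdot\rfloor_s^2\big)^{1/2}$ equi-Lipschitz on $H^1$ and yields the pointwise limit and the $\Gamma$-limsup by density;
\item the $\Gamma$-liminf through $\lfloor u_s*\rho_\eta\rfloor_s(\Omega')\le\lfloor u_s\rfloor_s(\Omega)$.
\end{itemize}
These parts are correct. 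The liminf uses only the hypothesis $u_s\to u$ in $L^2(\Omega)$, which already gives $u_s*\rho_\eta\to u*\rho_\eta$ in $C^2(\overline{\Omega'})$, so no $s$-uniform estimate is needed there. Your Lipschitz assumption on $\Omega$ is genuinely necessary, although the paper's statement omits it. On a slit disc, a function that is smooth on each side of the slit but jumps across it lies in $H^1(\Omega)$, while $\lfloor u\rfloor_s(\Omega)=+\infty$ for $s\ge\frac12$.

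What your proposal does not actually supply is the compactness half, and that is where the real difficulty of the theorem sits. Under $\sup_s(1-s)\lfloor u_s\rfloor_s^2\le S$, every elementary bound loses a factor $(1-s)^{-1}$. For example, $\|u_s-\bar u_s\|^2_{L^2(\Omega)}\le C\lfloor u_s\rfloor_s^2\le CS/(1-s)$. Similarly, $|\nabla(u_s*\rho_\eta)(x)|\le\|\nabla\rho_\eta\|_\infty\inf_c\|u_s-c\|_{L^1(B_\eta(x))}$ is bounded only by a multiple of $(\eta^{d+2s}\lfloor u_s\rfloor_s^2)^{1/2}=O((1-s)^{-1/2})$. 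Hence both your $C^1$ bound ``up to constants'' and your upgrade from $L^2_{\rm loc}$ to $L^2(\Omega)$ require an estimate carrying the factor $1-s$ uniformly in $s$.

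In the interior, this comes from the BBM doubling inequality $F(2t)\le 4F(t)$, where $F(t)=\int_{S^{d-1}}\|u(\cdot+t\sigma)-u\|^2_{L^2}\,d\mathcal H^{d-1}(\sigma)$. Summing over dyadic shells shows that the average of $F$ on $[r/2,r]$ is at most $Cr^{2s}(1-s)\lfloor u\rfloor_s^2$. This gives $\|u-u*\rho_\eta\|^2_{L^2(\Omega')}\le C\eta^{2s}(1-s)\lfloor u\rfloor_s^2(\Omega)$, together with the matching local oscillation bounds.

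Near $\partial\Omega$ you need two further ingredients:
\begin{itemize}
\item an $s$-uniform extension, built from local reflections and a partition of unity; after multiplication by $1-s$ the cutoffs cost only $C\|u\|^2_{L^2(\Omega)}$;
\item the Poincar\'e inequality $\|u-\bar u\|^2_{L^2(\Omega)}\le C(1-s)\lfloor u\rfloor_s^2(\Omega)$ for $s$ near $1$. It follows by contradiction from compactness of $L^2$-bounded sequences combined with your liminf, since the limit would be a constant with zero mean and unit norm.
\end{itemize}

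Note also that the mollification estimate holds with $\eta^{2s}$, not $\eta^2$; to see this, test $\sin(kx_1)$ with $k\eta\approx1$. The $\eta^2$ form holds only for $s\ge s(\eta)$, which is harmless because you fix $\eta$ before letting $s\to1$. With these ingredients made explicit, your outline proves the theorem.
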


\section{\bf $\Gamma$-limit in the vertical direction} \label{asymp2}

We now start our analysis of the squared Gagliardo seminorms 
$$
\lfloor u
\rfloor^2_s(\Omega_\e)=\int_{\Omega_\e\times \Omega_\e} \frac{|u(x)-u(y)|^2}{|x-y|^{d+2s}} dx\,dy
$$
by considering the scaling for which the asymptotic behaviour is governed by the `thin direction' $x_d$. To give a heuristic idea of this scaling, 
we consider a function $u=u(x_d)$ and write the corresponding Gagliardo seminorm in terms of the function $v$ defined by $v(x_d)=u(\e x_d)$. After the change of variables $x=\e z$ and $y=\e w$, we have
\begin{equation}\label{2.2}
\lfloor u\rfloor_s^2(\Omega_\e)=\e^{d-2s}
\int_0^1\int_0^1(v(z_d)-v(w_d))^2\Big(\int_{\frac\omega\e\times\frac\omega\e}\frac{1}{|z-w|^{d+2s}}dz' dw'\Big)dz_d dw_d. 
\end{equation}
With $z_d-w_d\neq 0$ fixed, using the change of variable $\xi=z'-w'$ and taking into account the integrability of the kernel, we estimate
$$
\int_{\frac\omega\e\times\frac\omega\e}\frac{1}{|z-w|^{d+2s}}dz' dw'\sim \frac{|\omega|}{\e^{d-1}}\int_{\mathbb R^{d-1}}\frac{1}{(|z_d-w_d|^2+ |\xi|^2)^{\frac d2+s}}d\xi.
$$
We can then integrate out the dependence on $|z_d-w_d|$ by the change of variable $\xi=|z_d-w_d|\zeta$, which gives 
\[
\int_{\frac\omega\e\times\frac\omega\e}\frac{1}{|z-w|^{d+2s}}dz' dw'
\sim \left(\frac{|\omega|}{\e^{d-1}}\int_{\mathbb R^{d-1}}\frac1{(1+ |\zeta|^2)^{\frac d2+s}}d\zeta\right) \frac1{|z_d-w_d|^{1+2s}}.
\]
Plugging this relation back into \eqref{2.2} we obtain
\begin{equation}\label{2.3}
\lfloor u\rfloor_s^2(\Omega_\e)\ \sim\ \e^{1-2s}|\omega|\int_{\mathbb R^{d-1}}\frac1{(1+ |\zeta'|^2)^{\frac d2+s}}d\zeta'
\int_0^1\int_0^1\frac{(v(z_d)-v(w_d))^2}{|z_d-w_d|^{1+2s}}dz_d dw_d. 
\end{equation}
This formula suggests that

\smallskip
i) the first scaling is $\e^{1-2s}$;

\smallskip
ii) the limit can be expressed as the integral over $\omega$ of the one-dimensional Gagliardo 
$s$-seminorms along the vertical sections, multiplied by an appropriate constant.;

\smallskip
iii) The coefficient is given by a hypergeometric integral that encodes the interaction between the planar and the vertical variables.

\smallskip
To justify this argument, we first establish a preliminary result that allows us to extend the previous reasoning to functions depending also on the planar variable. More precisely, we prove matching lower and upper bounds for the 
$s$-seminorm in terms of the integral of the one-dimensional 
$s$-seminorm taken along the thin direction.

In the following, for $\tau>0$ small enough, we define the sets
\[\omega_\tau=\{x^\prime\in\omega: \ \hbox{\rm dist}(x^\prime,\partial\omega)>\tau\}\quad \mbox{and}\quad\  \Omega_\e^\tau=\omega_\tau\times (0,\e).\] 

\begin{lemma}[Estimate of the seminorm in the thin direction]\label{vert-lemma}
Let $s_\e\in(0,1)$, and let $\{u_\e\}_\e$ be a sequence in $H^{s_\e}(\Omega_\e)$ 
such that the following properties hold: 

\indent $\bullet$ (equiboundedness) $\sup_{\e>0}\|u_\e\|_\infty=L_\infty<+\infty$;  

\indent $\bullet$ (equi-planar Lipschitz condition) there exists $L>0$ such that for all $\e>0$ 
\begin{equation*}\label{equilip}
|u_\e(x^\prime, x_d)-u_\e(y^\prime, x_d)|\leq L|x^\prime-y^\prime| \ \ \hbox{\rm for all } x^\prime,y^\prime\in\omega, x_d\in (0,\e). 
\end{equation*}
Let $\tau>0$ and define
\begin{equation}\label{def-vertical}
V_\varepsilon(u_\varepsilon)
:=
\int_{\omega_\tau}
\int_0^\varepsilon
\int_0^\varepsilon
\frac{|u_\varepsilon(x',x_d)-u_\varepsilon(x',y_d)|^2}
{|x_d-y_d|^{1+2s_\varepsilon}}
\,dx_d\,dy_d\,dx'.
\end{equation}
For every $\eta>0$, the following estimates hold:
\begin{align}
(1+\eta)[u_\varepsilon]_{s_\varepsilon}^2(\Omega_\varepsilon)
+\frac{\varepsilon\phi(\varepsilon)}{\eta}
&\ge
C_{s_\varepsilon,d}\,V_\varepsilon(u_\varepsilon),
\label{vert-est1}
\\
[u_\varepsilon]_{s_\varepsilon}^2(\Omega_\varepsilon^\tau)
-\frac{\varepsilon\phi(\varepsilon)}{\eta}
&\le
(1+\eta)C_{s_\varepsilon,d}\,V_\varepsilon(u_\varepsilon).
\label{vert-est2}
\end{align}
The constant $C_{s,d}$ is defined by
\begin{equation}\label{defC}
C_{s,d}
=
\int_{\mathbb{R}^{d-1}}
\frac{1}{(1+|\xi|^2)^{\frac{d}{2}+s}}
\,d\xi.
\end{equation}
Moreover, the function $\phi$ depends only on $L_\infty$, $L$, $\omega$, and $\tau$, and satisfies
\[
\lim_{\varepsilon\to 0}\phi(\varepsilon)=0.
\]
\end{lemma}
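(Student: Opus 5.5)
The idea is to compare each pair $(x,y)$ with the ``vertical'' pair obtained by freezing the planar variable. With $x=(x',x_d)$, $y=(y',y_d)$, $\xi=y'-x'$, write
\[
u_\e(x)-u_\e(y)=\big(u_\e(x',x_d)-u_\e(x',y_d)\big)+\big(u_\e(x',y_d)-u_\e(y',y_d)\big)=:A+B,
\]
where $|B|\le \min\{L|\xi|,2L_\infty\}$ by the equi-planar Lipschitz condition and equiboundedness. We use the elementary inequalities
\[
(A+B)^2\le(1+\eta)A^2+\big(1+\tfrac1\eta\big)B^2,\qquad (A+B)^2\ge \tfrac{1}{1+\eta}A^2-\tfrac1\eta B^2 .
\]
The $A^2$-term, integrated against the kernel in $y'$, is then computed by the scaling of the heuristic argument leading to \eqref{2.3}. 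The $B^2$-term is shown to be an error of size $\e\phi(\e)$.

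\textbf{Error term.} For fixed $x_d,y_d\in(0,\e)$, set $t=|x_d-y_d|\le\e$. We need
\[
\int_{\omega}\int_{\omega}\frac{\min\{L^2|\xi|^2,4L_\infty^2\}}{(t^2+|\xi|^2)^{\frac d2+s_\e}}\,d\xi\,dx'\le |\omega|\int_{\mathbb R^{d-1}}\frac{\min\{L^2|\xi|^2,4L^2_\infty\}}{|\xi|^{d+2s_\e}}\,d\xi .
\]
Near the origin the integrand behaves like $|\xi|^{1-d-2s}\cdot|\xi|^0$, i.e.\ $L^2|\xi|^{2-d-2s}$, which is integrable in $\mathbb R^{d-1}$ since $2-2s>0$. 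Near infinity it is $|\xi|^{-d-2s}$, which is integrable. Integrating over $x_d,y_d\in(0,\e)^2$ therefore gives a bound of order $\e^2\cdot C$, that is $\e\phi(\e)$ with $\phi(\e)=O(\e)$. If $s_\e$ varies, the constant behaves like $1/(1-s_\e)+1/s_\e$. Should this blow up, I would instead keep $t$ in the denominator and split at $|\xi|=\e^{\alpha}$ to still get $\phi\to0$; for fixed $s$ no issue arises. The dependence is only on $L,L_\infty,\omega$ (and $\tau$).

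\textbf{Main term.} For fixed $x'$ and $x_d,y_d$, we need
\[
\int_{\omega}\frac{dy'}{(t^2+|x'-y'|^2)^{\frac d2+s}}.
\]
For the upper bound \eqref{vert-est2}, restrict to $x'\in\omega_\tau$ and enlarge $\omega$ to $\mathbb R^{d-1}$; the change of variable $\xi=t\zeta$ gives exactly $C_{s,d}\,t^{-1-2s}$. This yields
\[
\lfloor u_\e\rfloor^2(\Omega_\e^\tau)\le(1+\eta)C_{s_\e,d}V_\e+\tfrac{C}{\eta}\e\phi .
\]
Here the $B$-error is bounded as above, and the domain $\Omega^\tau_\e$ on both sides keeps $x'\in\omega_\tau$. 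For the lower bound \eqref{vert-est1}, restrict $x'\in\omega_\tau$ in $\lfloor u_\e\rfloor^2(\Omega_\e)$; then $B_\tau(x')\subset\omega$, and
\[
\int_{B_\tau(x')}\frac{dy'}{(t^2+|x'-y'|^2)^{\frac d2+s}}=t^{-1-2s}\big(C_{s,d}-r(t/\tau)\big),
\]
where $r$ is the tail $\int_{|\zeta|>\tau/t}$. Since $t\le\e$, this tail is at most $c\,(\e/\tau)^{1+2s}$, and its contribution is bounded by
\[
\int_{\omega_\tau}\int\int A^2\,\tau^{-1-2s}\,dx_d\,dy_d\,dx'\le 4L_\infty^2|\omega|\e^2\tau^{-1-2s},
\]
which again is $\e\phi(\e)$ (absorb $1/\eta\ge1$ for small $\eta$, or note that it is even better). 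Multiplying the inequality $(A+B)^2\ge\frac{1}{1+\eta}A^2-\frac1\eta B^2$ by $(1+\eta)$ gives \eqref{vert-est1}, after adjusting $\phi$.

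\textbf{Main obstacle.} The main obstacle is the uniformity of the error $\phi$ when $s_\e$ is allowed to vary, in particular as $s_\e\to1$. There the constant $\int\min\{|\xi|^2,1\}|\xi|^{-d-2s}d\xi\sim 1/(1-s)$ blows up, and one must exploit the extra factor $\e^{2-2s}$ hidden in the computation at scale $t\le\e$. I would handle it by estimating the $B$-term with the finer split $|\xi|<\e$ versus $|\xi|\ge\e$, using $t^{-1-2s}$-type bounds on the inner region, so that $\phi(\e)\lesssim \e^{1-s_\e}\cdot\e^{1-s_\e}/(1-s_\e)$-type quantities. These must then be checked to vanish, possibly after redefining $\phi$ to absorb the normalisation.
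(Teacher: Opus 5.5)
Your overall plan is the paper's. The paper splits $u_\e(x)-u_\e(y)$ into the same vertical difference $A$ and horizontal difference $B$, and uses the same $\eta$-weighted elementary inequalities. It evaluates $\int_{\mathbb R^{d-1}}(t^2+|\xi|^2)^{-\frac d2-s}\,d\xi=C_{s,d}\,t^{-1-2s}$ exactly, and has a boundary error of size $L_\infty^2\e^2\tau^{-1-2s}$. Your restriction to $B_\tau(x')$ is an equivalent variant of the paper's extension by zero and estimate of the region $y'\notin\omega$. The main-term part of your argument is fine.

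The gap is in the error term. After dropping $t$ from the kernel you need $\int_{\mathbb R^{d-1}}\min\{L^2|\xi|^2,4L_\infty^2\}\,|\xi|^{-d-2s}\,d\xi<\infty$. Near the origin this equals $\sigma_{d-1}L^2\int_0 r^{2-d-2s}r^{d-2}\,dr=\sigma_{d-1}L^2\int_0 r^{-2s}\,dr$, which is finite only for $s<1/2$. The threshold $2-2s>0$ you quote is the one for $\mathbb R^d$, not $\mathbb R^{d-1}$. So for every fixed $s\in[1/2,1)$ your bound is $+\infty$, and ``for fixed $s$ no issue arises'' is false. Even for $s<1/2$ the constant behaves like $1/(1-2s)$, not $1/(1-s)$.

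No sharper estimate of this form can help. For $u_\e(x)=x_1$ the $B$-term is genuinely of order $\e^{3-2s}$ for $s>1/2$ and $\e^2|\log\e|$ for $s=1/2$ (compare the heuristics of Section~\ref{heu}), so no $O(\e^2)$ bound holds.

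The fallback you mention (keeping $t$) is therefore needed for all $s\ge1/2$, not only when $s_\e\to1$. Integrate out the vertical variables first:
\[
\int_0^\e\!\!\int_0^\e\frac{dx_d\,dy_d}{(|x_d-y_d|^2+|\xi|^2)^{\frac d2+s}}\le 2\e\int_0^\e\frac{dt}{(t^2+|\xi|^2)^{\frac d2+s}}\le 2\e\min\Big\{\e|\xi|^{-d-2s},\,c_s|\xi|^{1-d-2s}\Big\}.
\]
This gains one power of $|\xi|$ near $0$. Then $|\xi|^2\cdot|\xi|^{1-d-2s}=|\xi|^{3-d-2s}$ is integrable in $\mathbb R^{d-1}$ precisely because $2-2s>0$. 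You get a bound $\e\phi(\e)$ with $\phi(\e)\to0$; explicitly $\phi\sim\e^{2-2s}$, $\e|\log\e|$, $\e$ for $s>1/2$, $s=1/2$, $s<1/2$ respectively, not $O(\e)$ in general. This is exactly the paper's $\phi$ in \eqref{defphi}, combined with dominated convergence.

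Your concern about uniformity as $s_\e\to1$ is legitimate. The paper's dominated-convergence step also tacitly needs $s_\e$ bounded away from $1$, since the dominating function has integral $\sim 1/(2-2s_\e)$. However, the application only needs $(1-s_\e)\e^{2s_\e}\phi(\e)\to0$, and the bound above gives that quantity as $O(\e^2)$.
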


\begin{proof} The idea of the proof is to decompose the interaction energy into a vertical
component, corresponding to interactions along the thin direction, and a horizontal
component, which will be shown to be negligible as $\e\to 0$.

We extend $u_\e$ to $\mathbb R^{d-1}\times(0,\e)$ by setting $u_\e(x^\prime,x_d)=0$ for $x^\prime\in\mathbb R^{d-1}\setminus\omega$. 
For each measurable subset $A$ of $\mathbb R^{d-1}$, we define 
\begin{eqnarray*}
&&I_{\e,\tau}(A)= \int_{\omega_\tau}\int_0^\e\int_0^\e\int_A\frac{|u_\e(x^\prime, x_d)-u_\e(y^\prime, y_d)|^2}{(|x_d-y_d|^2+|x^\prime-y^\prime|^2)^{\frac{d}{2}+s_\e}}\, dy^\prime\, dx_d\, dy_d\, dx^\prime\\
&&I^{\rm v}_{\e,\tau}(A)=\int_{\omega_\tau}\int_0^\e\int_0^\e\int_A \frac{|u_\e(x^\prime, x_d)-u_\e(x^\prime, y_d)|^2}{(|x_d-y_d|^2+|x^\prime-y^\prime|^2)^{\frac{d}{2}+s_\e}}\, dy^\prime\, dx_d\, dy_d\, dx^\prime\\ 
&&I^{\rm h}_{\e,\tau}(A)=\int_{\omega_\tau}\int_0^\e\int_0^\e\int_A \frac{|u_\e(y^\prime, y_d)-u_\e(x^\prime, y_d)|^2}{(|x_d-y_d|^2+|x^\prime-y^\prime|^2)^{\frac{d}{2}+s_\e}}\, dy^\prime\, dx_d\, dy_d\, dx^\prime  . 
\end{eqnarray*}
Note that, even though the extension may not belong to $H^{s_\varepsilon}(\mathbb R^{d-1}\times (0,\varepsilon))$, the above quantities are finite. This follows observing that $|x^\prime - y^\prime| > \tau$ whenever $y^\prime \notin \omega$ and $x^\prime \in \omega_\tau$ and using the equi-boundedness of   $u_{\e}$. 

Since the change of variable $z^\prime-y^\prime=|z_d|\xi$ gives the equality 
\begin{equation}
\int_{\mathbb R^{d-1}} \frac{1}{(|z_d|^2+|z^\prime-y^\prime|^2)^{\frac{d}{2}+s}}\, dy^\prime=
\int_{\mathbb R^{d-1}} \frac{|z_d|^{d-1}}{(|z_d|^2+(|z_d||\xi|)^2)^{\frac{d}{2}+s}}\, d\xi
=
\frac{C_{s,d}}{|z_d|^{1+2s}}
\label{rd1}
\end{equation} 
for all $z_d\neq 0$ and $z^\prime\in\mathbb R^{d-1}$,  
then using Fubini's Theorem we can write  
\begin{equation}
I^{\rm v}_{\e,\tau}(\mathbb R^{d-1})=C_{s_\e,d}\int_{\omega_\tau}\int_0^\e\int_0^\e\frac{|u_\e(x^\prime, x_d)-u_\e(x^\prime, y_d)|^2}{|x_d-y_d|^{1+2s_\e}}\, dx_d\, dy_d\, dx^\prime.\label{cvar}
\end{equation}
A triangular inequality gives 
\begin{eqnarray}
&&I_{\e,\tau}^{\rm v}(\mathbb R^{d-1})\leq (1+\eta)I_{\e,\tau}(\mathbb R^{d-1})+\frac{1}{\eta}I_{\e,\tau}^{\rm h}(\mathbb R^{d-1})\label{tr1}\\
&&I_{\e,\tau}(\mathbb R^{d-1})\leq (1+\eta)I_{\e,\tau}^{\rm v}(\mathbb R^{d-1})+\frac{1}{\eta}I_{\e,\tau}^{\rm h}(\mathbb R^{d-1})\label{tr2}
\end{eqnarray}
for all $\eta>0$. 

We now show that 
\begin{equation}\label{stima-or}
I_{\e,\tau}^{\rm h}(\mathbb R^{d-1})\leq\e\phi(\e),
\end{equation}
where $\phi$ depends on $L$, $\omega$ and $\tau$, and $\lim\limits_{\e\to 0}\phi(\e)=0$.
Noting that for all $x^\prime\in\omega_\tau$ 
\begin{equation*}
\int_{\mathbb R^{d-1}\setminus\omega} \frac{1}{(|x_d-y_d|^2+|x^\prime-y^\prime|^2)^{\frac{d}{2}+s_\e}} dy^\prime
\leq\int_{\mathbb R^{d-1}\setminus B_\tau(0)} \frac{1}{|\xi^\prime|^{d+2s_\e}}\, d\xi^\prime
=\frac{\sigma_{d-1}}{(1+2s_\e)\tau^{1+2s_\e}}, 
\end{equation*}
where $\sigma_{d-1}=\mathcal H^{d-2}(S^{d-2})$, 
we get 
\begin{equation}
I^{\rm h}_{\e,\tau}(\mathbb R^{d-1}\setminus \omega)
\leq \frac{\sigma_{d-1}L^2_\infty |\omega_\tau|}{\tau^{1+2s_\e}}\e^2\label{st1}\\
\end{equation}
Hence, we only have to estimate the integral in the set where $y^\prime\in\omega$. 
We have 
\begin{eqnarray*}
I^{\rm h}_{\e,\tau}(\omega)
&\leq& 
2L^2\int_{\omega_\tau}\int_0^\e\int_{\omega}|x^\prime-y^\prime|^2\int_0^\e\frac{1}{(t^2+|x^\prime-y^\prime|^2)^{\frac{d}{2}+s}}\, dt\, dy^\prime\, dy_d\, dx^\prime\\
&\leq& 
2L^2\e\phi(\e)
\end{eqnarray*}
where $L$ is the Lipschitz constant and 
\begin{equation}\label{defphi}\phi(\e)=
\int_{\omega}\int_{\omega}|x^\prime-y^\prime|^2
\int_0^\e\frac{1}{(t^2+|x^\prime-y^\prime|^2)^{\frac{d}{2}+s}}\, dt
\, dy^\prime\, dx^\prime.
\end{equation}
Since the integrand in \eqref{defphi} pointwise converges to $0$ almost everywhere, and it is estimated by 
\begin{eqnarray*}
|x^\prime-y^\prime|^2\int_0^\e\frac{1}{(t^2+|x^\prime-y^\prime|^2)^{\frac{d}{2}+s}}\, dt
&\leq&|x^\prime-y^\prime|^{3-d-2s_\e} \int_0^{+\infty}\frac{1}{(1+w^2)^{\frac{d}{2}+s_\e}} d w \\
&\leq&\hbox{\rm diam}(\omega)^{1-2s_\e} |x^\prime-y^\prime|^{2-d}\int_0^{+\infty}\frac{1}{(1+w^2)^{\frac{d}{2}+s_\e}} d w, 
\end{eqnarray*}
with
$$\int_{\omega}\int_{\omega}|x^\prime-y^\prime|^{2-d}\, dx^\prime\, dy^\prime<+\infty,$$ 
by Lebesgue's Theorem we obtain 
$\lim\limits_{\e\to 0}\phi(\e)=0$.
Since \eqref{st1} holds, this implies \eqref{stima-or}.

Noting that \eqref{st1} holds also for $I_{\e,\tau}(\mathbb R^{d-1}\setminus\omega)$, we can write
\begin{equation}\label{stima-semi}
I_{\e,\tau}(\mathbb R^{d-1})
= I_{\e,\tau}(\omega)+r_\e, 
\end{equation}
with the remainder $r_\e$ satisfying $|r_\e|\leq \sigma_{d-1}L^2_\infty |\omega_\tau|\tau^{-1-2s_\e}\e^2$. 

Recalling \eqref{tr1} and \eqref{tr2}, estimates \eqref{stima-or} and \eqref{stima-semi} imply that 
\begin{eqnarray*}
I_{\e,\tau}^{\rm v}(\mathbb R^{d-1})&\leq&(1+\eta)(I_{\e,\tau}(\omega)+|r_\e|)+\frac{\e\phi(\e)}{\eta}\\
&\leq&(1+\eta)\lfloor u_\e\rfloor_{s_\e}^2(\Omega_\e)
+C\e^2\tau^{-1-2s_\e}
+\frac{\e\phi(\e)}{\eta},
\\
\lfloor u_\e\rfloor_{s_\e}^2(\Omega_\e^\tau)-C\e^2\tau^{-1-2s_\e}&\leq& I_{\e,\tau}(\omega)-|r_\e|\\
&\leq&(1+\eta)I_{\e,\tau}^{\rm v}(\mathbb R^{d-1})+\frac{\e\phi(\e)}{\eta}, 
\end{eqnarray*}
and, since \eqref{cvar} holds, the claim follows. \end{proof}

It is useful to provide a more explicit expression for the constants $C_{s,d}$ in \eqref{defC}, as stated in the following proposition. We note that the computation of these constants, as well as of others appearing below, relies on the evaluation of the integral
\[
\int_0^{+\infty}\frac{1}{(1+t^2)^s}\,dt
=
\sqrt{\pi}\,\frac{\Gamma\!\left(s-\frac12\right)}{2\Gamma(s)},
\]
which holds for $s>\frac12$. Here $\Gamma$ denotes the usual Euler's $\Gamma$ function.

\begin{proposition}[Alternative expression for $C_{s,d}$]\label{pro-Csd}
We have 
\begin{align}\label{csd-1}
C_{s,d}=\int_{\mathbb{R}^{d-1}}
\frac{1}{(1+|\xi|^2)^{\frac{d}{2}+s}}
\,d\xi=\frac{2}{\sqrt\pi} \,
\frac{\Gamma(\frac{d}{2}+1)\Gamma(\frac{1}{2}+s)}{\Gamma(\frac{d}{2}+s)}\, \frac{\sigma_d}{2d}=\pi^{\frac{d-1}{2}} \frac{\Gamma(\frac{1}{2}+s)}{\Gamma(\frac{d}{2}+s)}. 
\end{align}
\end{proposition}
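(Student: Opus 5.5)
Pass to polar coordinates in $\mathbb R^{d-1}$, reduce the radial integral to a Beta function, and then compare with $\sigma_d/(2d)$ using the standard formula for the area of the unit sphere.

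\emph{Step 1: polar coordinates.} Writing $\xi=r\theta$ with $\theta\in S^{d-2}$, we get
\[
C_{s,d}=\sigma_{d-1}\int_0^{+\infty}\frac{r^{d-2}}{(1+r^2)^{\frac d2+s}}\,dr .
\]

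\emph{Step 2: Beta function.} Substitute $t=r^2$, so that $r^{d-2}\,dr=\tfrac12 t^{\frac{d-3}{2}}\,dt$. Using
\[
\int_0^\infty \frac{t^{a-1}}{(1+t)^{a+b}}\,dt=B(a,b),
\]
with $a=\frac{d-1}{2}$ and $b=\frac12+s$, the radial integral equals $\frac12 B\big(\frac{d-1}2,\frac12+s\big)$. (For $d=2$ this is exactly the integral $\int_0^{+\infty}(1+t^2)^{-s}\,dt$ recalled before the statement, up to a factor $2$.)

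\emph{Step 3: the rightmost expression.} Insert $\sigma_{d-1}=2\pi^{\frac{d-1}{2}}/\Gamma\big(\frac{d-1}{2}\big)$ and write the Beta function as a ratio of Gamma functions:
\[
C_{s,d}=\frac{2\pi^{\frac{d-1}2}}{\Gamma(\frac{d-1}2)}\cdot\frac12\cdot\frac{\Gamma(\frac{d-1}2)\Gamma(\frac12+s)}{\Gamma(\frac d2+s)}
=\pi^{\frac{d-1}2}\frac{\Gamma(\frac12+s)}{\Gamma(\frac d2+s)} .
\]
The factor $\Gamma\big(\frac{d-1}{2}\big)$ cancels, which gives the rightmost expression in \eqref{csd-1}.

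\emph{Step 4: the middle expression.} It remains to check that
\[
\frac{2}{\sqrt\pi}\,\Gamma\Big(\frac d2+1\Big)\frac{\sigma_d}{2d}=\pi^{\frac{d-1}{2}} .
\]
Since $\sigma_d=2\pi^{d/2}/\Gamma(\frac d2)$ and $\Gamma(\frac d2+1)=\frac d2\Gamma(\frac d2)$, we have $\Gamma(\frac d2+1)\,\sigma_d/(2d)=\pi^{d/2}/2$. Multiplying by $2/\sqrt\pi$ gives $\pi^{\frac{d-1}2}$, as required.

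No step is a genuine obstacle. The only points needing care are the normalization convention $\sigma_k=\mathcal H^{k-1}(S^{k-1})$ from \eqref{defsigma}, so that the surface measure in $\mathbb R^{d-1}$ is $\sigma_{d-1}$, and the degenerate case $d=2$. In that case $S^0$ consists of two points, $\sigma_1=2$ agrees with the formula, and the computation goes through unchanged.
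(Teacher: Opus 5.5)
Your argument is correct. It shares the first step with the paper: polar coordinates give $C_{s,d}=\sigma_{d-1}J^s_d$ with $J^s_d=\int_0^{+\infty} z^{d-2}(1+z^2)^{-\frac d2-s}\,dz$. Where it differs is in evaluating this radial integral.

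The paper, in the Appendix, does the following:
\begin{itemize}
\item it integrates by parts to obtain the recursion $J^s_d=\frac{d-3}{d-2+2s}J^s_{d-2}$;
\item it telescopes the resulting products into Gamma quotients, separately for $d$ even and $d$ odd;
\item it computes the two base cases by hand: $J^s_2$ via the one-dimensional integral quoted before the statement (with exponent $1+s$), and $J^s_3=\frac1{1+2s}$;
\item it gets the middle expression from the ratio $\sigma_d/\sigma_{d-1}$.
\end{itemize}
Your substitution $t=r^2$, giving $J^s_d=\frac12 B\big(\frac{d-1}2,\frac12+s\big)$, replaces all of this by a single line valid for every $d\ge 2$, with no parity split. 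The only cost is that you take Euler's Beta integral $\int_0^{+\infty}t^{a-1}(1+t)^{-a-b}\,dt=\Gamma(a)\Gamma(b)/\Gamma(a+b)$ as known. The paper needs only integration by parts plus the one formula it quotes, which is itself the case $a=\frac12$ of that Beta integral.

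A further advantage of your route is that it handles the paper's other constants verbatim. The same substitution gives $C_{s,d;p}=\frac12\sigma_{d-1}B\big(\frac{d-1}2,\frac{1+sp}2\big)$ and $\int_{\mathbb R^{d-1}}|\xi'|^2(1+|\xi'|^2)^{-\frac d2-s}\,d\xi'=\frac12\sigma_{d-1}B\big(\frac{d+1}2,s-\frac12\big)$. From the latter, the relation $K_{s,d}=C_{s,d}/((2s-1)(3-2s))$ follows at once from $\Gamma(x+1)=x\Gamma(x)$, whereas the paper sets up a separate recursion for each constant.

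One harmless slip: for $d=2$ the radial integral is $\int_0^{+\infty}(1+r^2)^{-(1+s)}\,dr$. That is the recalled integral with $s$ replaced by $1+s$, not the recalled integral itself; this parenthetical plays no role in your proof.
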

\noindent For the sake of completeness, we include the proof in the Appendix.

\medskip
Observing that
\[
\int_\omega\int_0^\e\!\!\int_0^\e
\frac{|u(x^\prime,x_d)-u(x^\prime,y_d)|^2}{|x_d-y_d|^{1+2s}}
\, dx_d\, dy_d\, dx^\prime
=
\e^{1-2s}
\int_\omega\int_0^1\!\!\int_0^1
\frac{|v(x^\prime,t)-v(x^\prime,\tau)|^2}{|t-\tau|^{1+2s}}
\, dt\, d\tau\, dx^\prime,
\]
where $v$ denotes the rescaled function defined by 
\[
v(x^\prime,t)=u(x^\prime,\e t),
\]
the previous lemma shows that the natural scaling factor is $\e^{1-2s_\e}$. 

\smallskip
Using the estimates in Lemma \ref{vert-lemma}, we can now prove the $\Gamma$-convergence result in this scaling. The $\Gamma$-limit is computed with respect to the weak convergence in $L^2_{\rm loc}(\omega\times(0,1))$ of the rescaled functions $v_\e$, under the assumption that they are bounded in $L^2_{\rm loc}(\omega\times(0,1))$. As in the integer case; that is, for the Dirichlet integral on $H^1(\Omega)$, this property is not guaranteed, up to addition of constants, since we only have a control in the $x_d$-direction. However, such a condition follows from a control of $u$ at $x_d=0$; e.g., prescribing boundary conditions $u(x',0)=u_0(x')$.

\begin{theorem}[Gamma-limit at the first scaling]\label{fulldim} 
Let $s_0\in[0,1]$ be fixed, and $\{s_\e\}_\e\subset (0,1)$ be such that $s_\e\to s_0$ as $\e\to 0$. 
Let the functional $E_\e$ be defined in $H^{s_\e}(\Omega_\e)$ by 
$$E_\e(u)=\frac{1}{\e^{1-2s_\e}}\lfloor u\rfloor_{s_\e}^2(\Omega_\e).$$ 
Then the following 
$\Gamma$-convergence results hold with respect to the weak convergence in $L^2_{
\rm loc}(\omega\times (0,1))$ of the corresponding scaled functions:

\indent $\bullet$ if $s_0<1$, then 
$$\Gamma\hbox{\rm -}\lim_{\e\to 0}{E_\e}({u})=C_{s_0,d}\int_{\omega}\int_0^1\int_0^1\frac{|v(x^\prime, x_d)-v(x^\prime, y_d)|^2}{|x_d-y_d|^{1+2s_0}}\, dx_d\, dy_d\, dx^\prime,$$
\hspace{8mm} with $C_{s_0,d}$ is defined in  \eqref{defC}; 

\smallskip 

\indent $\bullet$ {\rm(Bourgain--Brezis--Mironescu-type result)} if $s_0=1$, then 
\begin{equation*}\Gamma\hbox{\rm -}\lim_{\e\to 0}(1-s_\e){E_\e}(u)
= \frac{\sigma_d}{2d} \int_{\omega}\int_0^1 \Big|\frac{\partial v}{\partial x_d}(x^\prime,x_d)\Big|^2\,dx_d\, dx^\prime, 
\end{equation*}
\smallskip 
where we have used \eqref{csd-1} to write $C_{1,d}=\frac{\sigma_d}{2d}$.
\end{theorem}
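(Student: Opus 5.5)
The proof combines Lemma \ref{vert-lemma} with lower semicontinuity of one-dimensional seminorms, handling the lower and upper bounds separately.

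\textbf{Rescaling.} For $v_\e(x',t)=u_\e(x',\e t)$ we have
$$V_\e(u_\e)=\e^{1-2s_\e}F^\tau_{s_\e}(v_\e),\qquad F^\tau_{s}(v):=\int_{\omega_\tau}\int_0^1\int_0^1\frac{|v(x',t)-v(x',r)|^2}{|t-r|^{1+2s}}\,dt\,dr\,dx'.$$
Estimates \eqref{vert-est1}--\eqref{vert-est2}, divided by $\e^{1-2s_\e}$, then compare $E_\e(u_\e)$ with $C_{s_\e,d}F^\tau_{s_\e}(v_\e)$. The error term is $\e^{2s_\e}\phi(\e)/\eta$, which tends to $0$ because $s_\e\ge 0$ and $\phi(\e)\to0$. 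Proposition \ref{pro-Csd} gives $C_{s_\e,d}\to C_{s_0,d}$ continuously, including $s_0=1$ with $C_{1,d}=\sigma_d/(2d)$.

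\textbf{Upper bound (recovery sequences).} I would argue by density. For $v$ smooth up to the boundary, Lipschitz and bounded on $\overline{\omega}\times[0,1]$, take $u_\e(x',x_d)=v(x',x_d/\e)$. This sequence is equibounded and equi-planar Lipschitz, so \eqref{vert-est2} applies on $\Omega^\tau_\e$. Then $F_{s_\e}(v)\to F_{s_0}(v)$ by dominated convergence when $s_0<1$. When $s_0=1$, the one-dimensional BBM pointwise limit (Theorem \ref{BBM-thm} with $d=1$, applied on each vertical section) gives $(1-s_\e)F_{s_\e}(v)\to \frac{\sigma_1}{2}\int|\partial_t v|^2$. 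This constant must be rematched with $C_{1,d}\sigma_1/2$; the identification $C_{1,d}\cdot\frac{\sigma_1}{2}$ versus $\frac{\sigma_d}{2d}$ needs a check of normalization. The contribution of $\omega\setminus\omega_\tau$ is not covered by the lemma, so I would bound it separately. Because $v$ is smooth, the full seminorm on $(\omega\setminus\omega_\tau)\times(0,\e)$ interacting with $\Omega_\e$ is bounded, via the direct computation \eqref{rd1}, by $C\,\e^{1-2s_\e}\,|\omega\setminus\omega_{2\tau}|$ (times $(1-s_\e)^{-1}$ in the BBM case). This is $O(\tau)$ after scaling, and I let $\tau\to0$ last. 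For general $v$ with finite limit energy, I would approximate by smooth functions (mollification in $x'$ and extension/mollification in $t$) with convergence of the limit energy, and use the standard density argument for the $\Gamma$-limsup.

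\textbf{Lower bound (main obstacle).} Given $v_\e\rightharpoonup v$ weakly in $L^2_{\rm loc}$ with bounded energy, Lemma \ref{vert-lemma} cannot be applied directly, because $u_\e$ need be neither bounded nor planar Lipschitz. I would therefore regularize in $x'$ only: set $u^\delta_\e=(T_M u_\e)*\rho_\delta$, a truncation at level $M$ followed by mollification in $x'$. Truncation decreases the Gagliardo seminorm. Mollification in $x'$ is a convex average of horizontal translates, and translation invariance of the kernel shows, via Jensen, that $\lfloor u*\rho_\delta\rfloor^2(\Omega^{\delta}_\e)\le\lfloor u\rfloor^2(\Omega_\e)$. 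The resulting $u^\delta_\e$ is bounded by $M$ and planar Lipschitz with constant $L(M,\delta)$, uniformly in $\e$. Applying \eqref{vert-est1} on a slightly smaller domain gives
$$(1+\eta)\liminf E_\e(u_\e)\ge \liminf C_{s_\e,d}F^\tau_{s_\e}(v^\delta_\e).$$
Weak lower semicontinuity of the convex functional $F^\tau_{s}$, with uniformity in varying $s_\e\to s_0$, is obtained by Fatou-type arguments on the kernels when $s_0<1$ and by the BBM $\Gamma$-liminf on vertical sections when $s_0=1$. Together with $v^\delta_\e\rightharpoonup (T_Mv)*\rho_\delta$, this gives the bound for $(T_Mv)*\rho_\delta$. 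The weak convergence of $v^\delta_\e$ to $(T_Mv)*\rho_\delta$ needs care, since truncation is not weakly continuous; I expect this to be the delicate point. To handle it, I would replace truncation by a weakly continuous device, namely mollification in $x'$ alone. Mollification in $x'$ makes sections planar Lipschitz with a constant controlled by $\|u\|_{L^2}$, and the boundedness hypothesis can be dropped by inspecting the proof of Lemma \ref{vert-lemma}, where only $L^2$-type bounds on the horizontal term are really needed. Finally I let $\eta,\delta\to0$ and $\tau\to0$ using lower semicontinuity and monotone convergence.
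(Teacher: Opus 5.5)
Your proposal is correct and follows essentially the paper's proof. For the lower bound, apply Lemma \ref{vert-lemma} to the $x'$-mollified sequence and pass to the limit by lower semicontinuity, resp.\ by the BBM $\Gamma$-liminf on vertical sections; for $s_0=1$ your sections converge only weakly, so you must first upgrade to strong convergence via the compactness part of Theorem \ref{BBM-thm}. For the upper bound, use the trivial recovery sequence $v(x',x_d/\e)$ with \eqref{vert-est2} plus density. The normalization you flag is consistent, since $\sigma_1/2=1$ and $C_{1,d}=\sigma_d/(2d)$.

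The differences are only technical. The paper truncates before mollifying; with weak convergence this is still justified by $\|T_Mv_\e-v_\e\|_{L^1}\le \sup_\e\|v_\e\|^2_{L^2}/M$ locally. You instead avoid truncation by noting that the lemma only needs an $L^2$-in-$x_d$ bound on the planar Lipschitz constant. You also estimate the boundary strip $\omega\setminus\omega_\tau$ explicitly in the upper bound, which the paper leaves implicit.
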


\begin{proof}
Let $u_\e$ be such that the corresponding scaled functions $v_\e$ weakly converge to $v$ in $L^2_{
\rm loc}(\omega\times (0,1))$. 
Note that we can suppose that the sequence $\{u_\e\}$ is equibounded in $L^\infty(\Omega_\e)$ up to a truncation argument, since the functionals $E_\e$ decrease by truncation. 
Let $\tau>0$ and $\varphi_\tau\colon\mathbb R^{d-1}\to [0,+\infty)$ be a mollifier with support in $B_\tau(0)$. 
We set $u_\e^\tau(x^\prime,x_d)=u_\e \ast\varphi_\tau$, where the convolution is performed in the variable $x^\prime$. Then, the sequence $\{u_\e^\tau\}_\e$ satisfies the uniform Lipschitz condition required to apply Lemma \ref{vert-lemma}. Noting that 
$$\lfloor u_\e\rfloor_{s_\e}^2(\Omega_\e)\geq \lfloor u_\e^\tau\rfloor_{s_\e}^2(\omega_\tau\times(0,\e)),$$ 
we can apply Lemma \ref{vert-lemma} to the sequence $\{u_\e^\tau\}_\e$ with $\Omega_\e$ replaced by $\Omega_\e^\tau=\omega_{\tau}\times (0,\e)$. Then, by \eqref{vert-est1} and the arbitrariness of $\eta>0$ therein, noting that $\e^{2s_\e}\phi(\e)\to 0$ we get 
\begin{eqnarray*}
\liminf_{\e\to 0} E_\e(u_\e)&\geq&C_{s_0,d} \liminf_{\e\to 0}\e^{2s_\e-1}\int_{\omega_{2\tau}}\int_0^\e\int_0^\e\frac{|u^\tau_\e(x^\prime, x_d)-u^\tau_\e(x^\prime, y_d)|^2}{|x_d-y_d|^{1+2s_\e}}\, dx_d\, dy_d\, dx^\prime\\
&=&C_{s_0,d} \liminf_{\e\to 0}\int_{\omega_{2\tau}}\int_0^1\int_0^1\frac{|v^\tau_\e(x^\prime, x_d)-v^\tau_\e(x^\prime, y_d)|^2}{|x_d-y_d|^{1+2s_\e}}\, dx_d\, dy_d\, dx^\prime, 
\end{eqnarray*}
where $v^\tau_\e(x^\prime, x_d)=u^\tau_\e(x^\prime, \e x_d)$. 
If we also set $v^\tau=v\ast \varphi_\tau$, we have the convergence $v^\tau_\e\to v^\tau$ in $L^2(\omega_\tau\times (0,1))$. 
We now separately consider the cases $s_0<1$ and $s_0=1$. 

\smallskip 

$\bullet$ {Case $s_0\in[0,1)$.} By Fatou's Lemma, we get 
\begin{eqnarray*}
&&\hspace{-2cm}\liminf_{\e\to 0}\int_{\omega_{2\tau}}\int_0^1\int_0^1\frac{|v^\tau_\e(x^\prime, x_d)-v^\tau_\e(x^\prime, y_d)|^2}{|x_d-y_d|^{1+2s_\e}}\, dx_d\, dy_d\, dx^\prime\\
&&\geq 
\int_{\omega_{2\tau}}\int_0^1\int_0^1\frac{|v^\tau(x^\prime, x_d)-v^\tau(x^\prime, y_d)|^2}{|x_d-y_d|^{1+2s_0}}\, dx_d\, dy_d\, dx^\prime. 
\end{eqnarray*}
Taking the limit as $\tau\to 0$ we obtain the lower bound. 

The upper bound for a  function $v\colon\mathbb R^{d-1}\times(0,1)\to\mathbb R$ such that  
\begin{equation*}
|v(x^\prime, x_d)-v(y^\prime, x_d)|\leq L|x^\prime-y^\prime|
\end{equation*}
for all $x^\prime,y^\prime\in \mathbb R^{d-1}$ and $x_d\in (0,1)$, is achieved by the trivial recovery sequence $u_\e(x^\prime, x_d)=v(x^\prime, \frac{x_d}{\e})$. Indeed, estimate \eqref{vert-est2} gives 
$$E_\e(u_\e)\leq 
\frac{\e^{2s_\e}\phi(\e)}{\eta}+ (1+\eta)C_{s_\e,d}\int_{\omega_\tau}\int_0^1\int_0^1\frac{|v(x^\prime, x_d)-v(x^\prime, y_d)|^2}{|x_d-y_d|^{1+2s_\e}}\, dx_d\, dy_d\, dx^\prime,$$
allowing us to obtain the upper bound. 
For a general $v\in L^2(\omega\times (0,1))$ we can proceed by approximation.

\smallskip  

$\bullet$ {Case $s_0=1$.} Again by Fatou's Lemma, 
\begin{eqnarray*}
&&\hspace{-2cm}\liminf_{\e\to 0}\int_{\omega_{2\tau}}(1-s_\e)\int_0^1\int_0^1\frac{|v^\tau_\e(x^\prime, x_d)-v^\tau_\e(x^\prime, y_d)|^2}{|x_d-y_d|^{1+2s_\e}}\, dx_d\, dy_d\, dx^\prime\\
&&\geq 
\int_{\omega_{2\tau}}\int_0^1 \Big|\frac{\partial v^\tau}{\partial x_d}(x^\prime,x_d)\Big|^2\, dx_d \, dx^\prime, 
\end{eqnarray*}
where we used the $\Gamma$-convergence result for the $s$-seminorms of Bourgain, Brezis and Mironescu, Theorem \ref{BBM-thm}. Taking the limit as $\tau\to 0$ we obtain the lower bound. 

\smallskip
For the upper bound, we can proceed exactly as in the case $s_0<1$, by considering the trivial recovery sequence $u_\e(x^\prime, x_d)=v(x^\prime, \frac{x_d}{\e})$ for a function $v$ which is Lipschitz with respect to the planar variable. Using \eqref{vert-est2} multiplied by $1-s_\e$, 
we get 
$$(1-s_\e)E_\e(u_\e)\leq 
\frac{\e^{2s_\e}\phi(\e)}{\eta}(1-s_\e)+ (1+\eta)C_{s_\e,d} \int_{\omega_\tau}(1-s_\e)
\lfloor v(x^\prime,\cdot) \rfloor^2_{s_\e}(0,1)dx^\prime.$$
Using the pointwise convergence in Theorem \ref{BBM-thm} we obtain the upper estimate. \end{proof}

\section{The dimension-reduction regimes}
We now describe the behaviour of Gagliardo seminorms in the dimension-reduction regimes. We first determine two different scaling laws for $s<1/2$ and $s>1/2$, and then prove dimension-reduction compactness, in the first case with respect to a weak convergence, and in the second case with respect to a strong convergence, and compute the $\Gamma$-limits with respect to those convergences. The case $s=1/2$ is critical and is obtained as a limit case of both other regimes.

\subsection{Determination of the scalings}\label{heu}
The heuristic argument used to identify the next scaling after 
$\varepsilon^{1-2s_\varepsilon}$ in the asymptotic analysis—namely, 
the scaling at which a dimension-reduction effect appears—is the following. 
Interactions between points at distance of order $\varepsilon$ or smaller 
can be regarded as genuinely $d$-dimensional, whereas interactions between 
points at distance larger than $\varepsilon$ are essentially $(d-1)$-dimensional.

These two types of interaction scale differently depending on whether 
$s > \tfrac12$ (the high-integrability regime) or $s < \tfrac12$ 
(the low-integrability regime). In the former case, the dominant 
contributions come from interactions at scales not larger than 
$\varepsilon$, while in the latter case the dominant interactions 
occur at scales greater than or equal to $\varepsilon$.

\smallskip
We now illustrate the heuristic argument, giving an estimate of the pointwise limit of  $\lfloor u\rfloor_{s_\e}^2(\Omega_\e)$ when
$u(x',x_d)=v(x')$ with $v$ a Lipschitz function and $s_\e\to s_0$, where $s_0\in [0,1]$ is fixed. We can limit our analysis to $v(x')=x_1$. In order to give an upper bound, for fixed $x\in \Omega_\e$ we can subdivide interactions inside the ball $B_{K\e}(x)$, where $K>1$ is any constant, and those outside the cylinder $\{y\in \Omega_{\e}\ |\ |x'-y'|<(K-1)\e\}$, since
$$
\Omega_\e\subset B_{K\e}(x)\times C_{(K-1)\e}(x),\qquad  C_{(K-1)\e}(x)=\{y\in\Omega_\e: |x'-y'|>(K-1)\e\}.
$$
We then have 
\begin{eqnarray*}
\int_{B_{K\e}(x)\cap \Omega_\e}\!\frac{(u(x)-u(y))^2}{|x-y|^{d+2s_\e}}dy
&\le& \int_{B_{K\e}(x)}\!\frac{|x_1-y_1|^2}{|x-y|^{d+2s_\e}}dy
\,\, \le \,\, \int_{B_{K\e}(x)}\frac{1}{|x-y|^{d+2s_\e-2}}dy\\
&=&\int_{B_{K\e}}\frac{1}{|\xi|^{d+2s_\e-2}}d\xi
\,\, =\,\, C\int_0^{K\e}t^{1-2s_\e}dt\\
&\le& CK^{2-2s_\e}\frac{\e^{2-2s_\e}}{1-s_\e},
\end{eqnarray*}
so that
\begin{equation}\label{geu-1}
\int_{\Omega_\e}\int_{B_{K\e}(x)\cap \Omega_\e}\frac{(u(x)-u(y))^2}{|x-y|^{d+2s_\e}}dy\le CK^{2-2s_\e}|\omega|\frac{\e^{3-2s_\e}}{1-s_\e},
\end{equation}
with $C$ depending only on $d$. Note that the term $K^{2-2s_\e}$ is largely suboptimal for $K$ large.

As far as the interactions outside the cylinder are concerned, if $R$ denotes the diameter of $\omega$, we have
\begin{eqnarray}\label{geu-1.5}\nonumber
\int_{ C_{(K-1)\e}(x)}\frac{(u(x)-u(y))^2}{|x-y|^{d+2s_\e}}dy
&\le& C\int_{C_{(K-1)\e}(x)}\frac{1}{|x'-y'|^{d+2s_\e-2}}dy\\
&=&\nonumber
C\e\int_{\{(K-1)\e<|\xi|<R\}}\frac{1}{|\xi'|^{d+2s_\e-2}}d\xi'\\
&=&\nonumber C\e\int_{(K-1)\e}^{R}t^{-2s_\e}dt
\\
&\le&C
\e\frac{((K-1)\e)^{1-2s_\e}-R^{1-2s_\e}}{2s_\e-1}.
\end{eqnarray}

\medskip
\noindent {\bf The high-integrability regime.}
We now consider the case $s_0>1/2$,
so that, from \eqref{geu-1.5}, we have 
\begin{equation}
\int_{\Omega_\e}\int_{ C_{(K-1)\e}(x)}\frac{(u(x)-u(y))^2}{|x-y|^{d+2s_\e}}dy
\le C(K-1)^{1-2s_\e}|\omega|\frac{\e^{3-2s_\e}}{2s_\e-1},
\end{equation}
with $C$ depending only on $d$, and, together with \eqref{geu-1}, that 
\begin{equation}\label{geu-2}
\lfloor u\rfloor_{s_\e}^2(\Omega_\e)\le C|\omega|\e^{3-2s_\e}\Big(
\frac{K^{2-2s_\e}}{1-s_\e}+\frac{(K-1)^{1-2s_\e}}{2s_\e-1}\Big).
\end{equation}

On the other hand, we can give a lower bound by considering  only $y\in B_{\e/4}(x)$  for $x\in 
\Omega_\e$ with dist$(x',\partial\omega)>\e/2$ and $ x_d\in[\e/4,3\e/4]$, so that 
\begin{eqnarray*}
&&\hskip-1cm\int_{B_{\e/4}(x)\cap \Omega_\e}\frac{(u(x)-u(y))^2}{|x-y|^{d+2s_\e}}dy
= \int_{B_{\e/4}(x)}\frac{|x_1-y_1|^2}{|x-y|^{d+2s_\e}}dy\\
&=& \frac1d\int_{B_{\e/4}(x)}\frac{1}{|x-y|^{d+2s_\e-2}}dy
=\frac1d\int_{B_{\e/4}}\frac{1}{|\xi|^{d+2s_\e-2}}d\xi
\\
&=&\frac{C}d\int_0^{\e/2}t^{1-2s_\e}dt= C\frac{\e^{2-2s_\e}}{1-s_\e},
\end{eqnarray*}
which shows that
\begin{equation}\label{geu-3}
\lfloor u\rfloor_{s_\e}^2(\Omega_\e)\ge C|\omega|\frac{\e^{3-2s_\e}}{1-s_\e},
\end{equation}
with $C$ depending only on $d$. 

\smallskip
Estimates \eqref{geu-2} and \eqref{geu-3} suggest that the relevant scaling of the energy be 

\smallskip
1) $\e^{3-2s_\e}$ for $s_0\in (1/2,1)$

\smallskip
2) $\frac{\e^{3-2s_\e}}{1-s_\e}$
for $s_0=1$.

\medskip
\noindent {\bf The low-integrability regime.}
We now turn to the case $s_0<1/2$,
in which the sign of the denominator in \eqref{geu-1.5} changes, so that we have 
\begin{equation}
\int_{\Omega_\e}\int_{ C_{(K-1)\e}(x)}\frac{(u(x)-u(y))^2}{|x-y|^{d+2s_\e}}dy
\le C
\e^2|\omega|\frac{R^{1-2s_\e}}{1-2s_\e}
\end{equation}
with $C$ depending only on $d$. This estimate, together with \eqref{geu-1}, implies that 
$$
\lfloor u\rfloor_{s_\e}^2(\Omega_\e)\le C|\omega|
\Big(
\e^2\frac{R^{1-2s_\e}}{1-2s_\e}+\e^{3-2s_\e}
\frac{K^{2-2s_\e}}{1-s_\e}\Big)
\le C|\omega|R^{1-2s_0}\e^2,
$$
where this last constant $C$ also depends on $s_0$.

To check that the scaling is sharp, with fixed $\delta>0$, for all $x'\in\omega$ with dist$(x',\partial\omega)>\delta$ we estimate
$$
\int_{\{y\in\Omega_\e:K\e<|x-y|<\delta\}}\frac{(u(x)-u(y))^2}{|x-y|^{d+2s_\e}}dy\ge C\e \int_{K\e}^\delta 
t^{-2s_\e}dt=C\e\frac{\delta^{1-2s_\e}-(K\e)^{1-2s_\e}}{1-2s_\e},
$$
so that $\lfloor u\rfloor_{s_\e}^2(\Omega_\e)
\ge C\e^2$,
with this last constant also depending on $\delta$. Hence, in this regime the relevant scaling is $\e^2$.

\medskip
\noindent {\bf The critical regime.} At $s=1/2$ the interactions at scale up to $\e$ and larger than any fixed $\delta>0$ are negligible. As a consequence, the correct scaling turns out to be $\e^2|\log\e|$, and the limit is local. Note that, compared with the other regimes, when $s=1/2$ we have $\e^2=\e^{3-2s}$; however, at this scaling the functionals diverge. The $\Gamma$-limit is obtained by optimizing the arguments from the cases $s\neq 1/2$.

\subsection{$\Gamma$-limit in the high-integrability case}\label{dr+12}
In the case where $s_\varepsilon \to s_0 > \tfrac12$, we prove a dimension-reduction theorem with respect to strong convergence in $L^1$ for sequences of functions such that
\[
\frac{1 - s_\varepsilon}{\varepsilon^{3 - 2s_\varepsilon}} \, \lfloor u_\varepsilon \rfloor_{s_\varepsilon}^2(\Omega_\varepsilon)
\]
is equibounded, even though the factor $1 - s_\varepsilon$ is actually redundant unless $s_\varepsilon \to 1$. 
We show that the cluster points of such sequences belong to $H^1(\omega)$, and that the corresponding $\Gamma$-limit is a multiple of the Dirichlet integral.

\subsubsection{Strong dimension-reduction compactness}

 Lemma \ref{local-lemma}
 is based on the fact that, in the notation of Definition \ref{drco},
we may write
\begin{eqnarray*}
\frac1\e\int_{\Omega_\e} |\nabla u_\e|^2dx=\int_{\omega\times(0,1)}|\nabla' v_\e|^2dx+\frac1{\e^2} \int_{\omega\times(0,1)}\Big|\frac{\partial v_\e}{\partial x_d}\Big|^2dx
\ge \int_{\omega\times(0,1)}|\nabla v_\e|^2dx,
\end{eqnarray*}
which at the same time proves compactness for $v_\e$ in $H^1(\omega\times (0,1))$ and that $\frac{\partial v_\e}{\partial x_d}$ tends to~$0$.
The decoupling of the ``horizontal'' and ``vertical'' derivatives is not straightforward for Gagliardo seminorms. 
To overcome this difficulty, in the following result we adopt a discretization procedure for the Gagliardo seminorm, which allows us to construct sequences in local Sobolev spaces. 
The scaling argument can then be applied to these sequences.

\begin{theorem}[Non-local dimension-reduction compactness]\label{nlcth-1}
Let $s=s_\e\to s_0>1/2$ and let $u_\e$ be such that $$\sup_\e\frac{1-s}{\e^{3-2s}}\lfloor u_\e\rfloor_s^2(\Omega_\e)\le S<+\infty.$$Then there exist $u\in H^1(\omega)$ and a subsequence $\{u_{\e_j}\}_j$ such that, up to the addition of constants, 
$u_{\e_j}\buildrel{\hbox {\rm\scriptsize DR }\, \, }\over{\longrightarrow} u$ locally in $L^1$.
\end{theorem}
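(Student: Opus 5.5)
The plan is to reduce to the local compactness Lemma \ref{local-lemma} by a discretization at scale $\e$, following the hint before the statement. Fix a small parameter $\delta\in(0,1)$ and consider the lattice $\delta\e\mathbb Z^{d-1}$ in the planar variables. On each cell $Q_i=\delta\e(i+[0,1)^{d-1})\times(0,\e)$ with $Q_i\subset\omega\times(0,\e)$ we take the average $a_i^\e=\frac{1}{|Q_i|}\int_{Q_i}u_\e$. We then build a piecewise-affine interpolation $w_\e(x')$ of the values $a_i^\e$ on a simplicial subdivision of the lattice, which depends only on $x'$. Because the cells have full height $\e$, comparing $u_\e$ with $a_i^\e$ already controls the vertical oscillation, since cell diameters are of order $\e$.

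The first key estimate bounds discrete differences by the Gagliardo energy. For neighbouring cells $Q_i,Q_j$ (with $|i-j|_\infty\le1$) all points satisfy $|x-y|\le c\e$, so by Jensen
\[
|a_i^\e-a_j^\e|^2\le\frac{1}{|Q_i||Q_j|}\int_{Q_i}\int_{Q_j}|u_\e(x)-u_\e(y)|^2\,dx\,dy\le \frac{(c\e)^{d+2s}}{(\delta^{d-1}\e^{d})^2}\int_{Q_i}\int_{Q_j}\frac{|u_\e(x)-u_\e(y)|^2}{|x-y|^{d+2s}}dx\,dy .
\]
Summing over neighbouring pairs, each pair of points is counted boundedly many times, so
\[
\frac1\e\int_{\omega'}|\nabla' w_\e|^2dx'\,\e \sim \sum_{i\sim j}(\delta\e)^{d-1}\Big|\frac{a_i^\e-a_j^\e}{\delta\e}\Big|^2\le C_\delta\,\e^{2s-3}\lfloor u_\e\rfloor_s^2(\Omega_\e)\le C_\delta S/(1-s)
\]
on any $\omega'\subset\subset\omega$. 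With $s_0<1$ this bound is uniform; if $s_0=1$ the factor $1-s$ must be recovered by the sharper BBM-type counting, namely averaging over all shifts of the lattice, as in Bourgain--Brezis--Mironescu/Ponce. Then Lemma \ref{local-lemma}, or simply Rellich in $x'$, gives that $w_\e-c_\e$ is precompact in $L^2_{\rm loc}(\omega)$ with limit in $H^1_{\rm loc}(\omega)$, and lower semicontinuity gives a bound on $\int|\nabla' u|^2$ that is uniform on exhaustions, hence $u\in H^1(\omega)$. Using a fixed $\delta$ suffices here.

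The second estimate shows that $v_\e-w_\e\to0$ in $L^1_{\rm loc}$ after rescaling. For $x\in Q_i$,
\[
\int_{Q_i}|u_\e-a_i^\e|^2dx\le\frac1{|Q_i|}\int_{Q_i}\int_{Q_i}|u_\e(x)-u_\e(y)|^2\le C\e^{2s}\,\e^{-(d)}\e^{d}\!\int_{Q_i}\!\int_{Q_i}\frac{|u_\e(x)-u_\e(y)|^2}{|x-y|^{d+2s}},
\]
which sums to $\le C\e^{2s}\lfloor u_\e\rfloor_s^2\le C\e^{3}S/(1-s)$. After dividing by $\e$ (rescaling to $\omega\times(0,1)$), $\|v_\e-\bar w_\e\|^2_{L^2}\le C\e^2\to0$, where $\bar w_\e$ is the piecewise-constant version. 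The piecewise-constant and piecewise-affine versions differ by at most neighbour differences, which are $O(\e)$ in $L^2$ by the first estimate. Hence $v_\e$ converges in $L^2_{\rm loc}$, so also in $L^1_{\rm loc}$, to the same $x_d$-independent limit $u$.

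I expect the main obstacle to be bookkeeping near $\partial\omega$ and the uniformity in $s$ as $s_0\to1$. Working on $\omega'\subset\subset\omega$ is what the statement calls local convergence, so boundary cells can be discarded. The additive constants $c_\e$ are chosen from the means of $w_\e$ on a fixed ball, with Poincaré on connected compact subsets handled by a diagonal argument. The $1-s$ issue only matters if $s_0=1$; there I would replace the single-neighbour Jensen bound by averaging the difference quotient over all directions and radii up to $\e$, which produces the factor $\int_0^\e t^{1-2s}dt\sim \e^{2-2s}/(1-s)$ and exactly cancels it.
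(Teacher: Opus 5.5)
For $s_0\in(1/2,1)$ your argument is correct, and it takes a simpler route than the paper. You discretize only in $x'$, with cells of full height $\e$, so the comparison function $w_\e$ is independent of $x_d$ from the start and the vertical oscillation is absorbed into the cell Poincar\'e bound. The paper instead discretizes on $d$-dimensional rotated lattices $\e r\mathbb Z^d_{\rho\overline\nu}$ of every size $\rho\in(0,1)$ and averages the interpolations with $\mu_\e$. It then recovers $x_d$-independence only afterwards, through Lemma~\ref{local-lemma}.

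However, the statement also covers $s_\e\to1^-$. That case is the only reason for the factor $1-s$ in the hypothesis, and it is the one used in Theorem~\ref{Gammath}. There your proof has a genuine gap, because both of your estimates degenerate:
\begin{itemize}
\item the gradient bound becomes $\int_{\omega'}|\nabla'w_\e|^2dx'\le C_\delta S/(1-s_\e)$;
\item the comparison becomes $\|v_\e-\bar w_\e\|^2_{L^2}\le C_\delta S\e^2/(1-s_\e)$, which need not vanish if $1-s_\e=O(\e^2)$.
\end{itemize}
The loss is structural, not a matter of constants. Jensen on cells of size $\sim\e$ replaces $|x-y|^{-d-2s}$ by its value at distance $c\e$. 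When $s_\e\to1$, the normalized energy $\frac{1-s}{\e^{3-2s}}\lfloor u\rfloor_s^2(\Omega_\e)$ is carried by pairs at distance $\ll\e$; pairs at distance comparable to $\e$ carry only an $O(1-s)$ share. Already for a smooth $u=u(x')$, your bound is of order $C_\delta(1-s)^{-1}\int|\nabla'u|^2$, while the true quantity stays fixed.

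The fixes you mention do not close this. Averaging over shifts of the lattice keeps the scale $\delta\e$, so it suffers the same loss. Averaging over radii $t\in(0,\e)$ does not act on your construction either. You would first need an inequality showing that differences of full-height cell averages at scale $\delta\e$ are dominated by averaged difference quotients at every smaller scale, and no such step is given.

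The paper supplies this through Steps 2--7. It uses lattices of size $\e r\rho$ in all orientations, weighted by $d\mu_\e\propto(1-s)\rho^{1-2s}\,d\rho$; the normalization cancels the $1-s$ and gives \eqref{Proof31}, with the sharp constant $\frac{\sigma_d}{2d}$ later reused in Remark~\ref{sub-rem}. The price is working with $d$-dimensional comparison functions.

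If you want to keep your planar construction, the missing ingredient is a uniform-in-$s$ translation estimate on the film:
\[
\int|u(x+h)-u(x)|^2\,dx\le C(1-s)|h|^{2s}\lfloor u\rfloor_s^2(\Omega_\e)\qquad\text{for } |h|\lesssim\e.
\]
This is a Bourgain--Brezis--Mironescu-type multiscale lemma. It can be proved by chaining $h$ into $\sim|h|/t$ steps of size $\sim t$, averaging over $t$ with weight $(1-s)t^{1-2s}$, and reflecting evenly across $x_d=0$ and $x_d=\e$ to stay in the domain. With it, your gradient bound becomes $\int_{\omega'}|\nabla'w_\e|^2\le C\delta^{2s-2}S$ and the squared comparison distance becomes $O(\e^2)$. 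But this lemma is exactly what your proposal leaves unproved.
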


\begin{proof} To make the proof easier to follow, we divide it into several steps.

\smallskip
{\bf Step 1.} {\em A first change of variables with a bound on the range of interactions.}

\smallskip
We fix $r\in (0,1/2)$ and, in view of the heuristic computation in Section \ref{heu}, we limit our analysis to interactions with $|x-y|<\e r$; that is, using the change of variables $\xi=y-x$, we estimate
\begin{equation}\label{Proof1}
\lfloor u_\e\rfloor_s^2(\Omega_\e)
\ge \int_{\{|\xi|<\e r\}}\frac1{|\xi|^{d+2s-2}}\int_{\Omega_\e^r}\frac{|u(x+\xi)-u(x)|^2}{|\xi|^2}dx d\xi,
\end{equation}
where $\Omega_\e^r=\{x\in\Omega_\e:{\rm dist}(x,\partial \Omega_\e)> r\e\}$.

\medskip
{\bf Step 2.} {\em Parameterization on the {\em Stiefel} manifold.}

\smallskip
The inner integral in \eqref{Proof1} can be interpreted as a difference quotient, which will yield suitable piecewise-affine interpolations.

Before addressing the general case $d \geq 2$, we first illustrate the strategy in the simpler case $d=2$. In this situation, each vector $\xi$ can be completed to the orthogonal basis $\{\xi, \xi^\perp\}$. By symmetry, we obtain that 
$$\lfloor u_\e\rfloor_s^2(\Omega_\e)\geq \int_{\{|\xi|<\e r\}}\frac1{|\xi|^{d+2s-2}}F_\e^\xi(u_\e)\, d\xi,$$ 
where 
$$F_\e^\xi(u_\e)=\frac{1}{2}\int_{\Omega_\e^r}\frac{|u(x+\xi)-u(x)|^2+|u(x+\xi^\perp)-u(x)|^2}{|\xi|^2}dx. $$
The functionals $F_\e^\xi$ can be discretized on suitable square lattices oriented with $\xi$. 

In dimension larger than two, the extension of $\xi$ to an orthogonal basis is performed using the  
 {\em set of orthonormal bases} (Stiefel manifold) of $\mathbb{R}^d$
\[
V:=\{\overline{\nu}=(\nu_1,...,\nu_d) : \nu_j \in S^{d-1} \text{ such that } \langle\nu_i, \nu_j\rangle=0 \text{ for } i\neq j \}.
\]
We observe that $V$ has Hausdorff dimension equal to $k_d:=d(d-1)/2$. 

Following the notation in \cite[page 3445]{Solci-vortices} (see also \cite[Remark 5]{BBD}), we write $\xi=\varepsilon r\rho\,\nu_n$ for some basis element $\nu_n$ of $\overline{\nu}\in V$, and rewrite \eqref{Proof1} as
\begin{eqnarray}\label{Proof2}
\lfloor u_\varepsilon\rfloor_s^2(\Omega_\varepsilon)
\geq
\frac{(\varepsilon r)^{2-2s}}{\mathcal{H}^{k_d}(V)}
\int_0^1 \frac{\sigma_d}{\rho^{-1+2s}}
\int_V \frac{1}{d}\sum_{n=1}^d
\int_{\Omega_\varepsilon^r}
\frac{\bigl|u_\varepsilon(x+\varepsilon r\rho\,\nu_n)-u_\varepsilon(x)\bigr|^2}{|\varepsilon r\rho|^2}
\,dx\, d\mathcal{H}^{k_d}(\overline{\nu})\, d\rho .\nonumber\\
\end{eqnarray}
This follows from a standard change of variables; the same argument (with full details) is given in \cite[Section 3.1]{BBD}.

\medskip
{\bf Step 3.} {\em Discretization on lattices with lattice size up to $\e r$.}

\smallskip
Given $\rho>0$ and $\overline{\nu}\in V$, we define $$\mathbb Z^d_{\rho\overline{\nu}}:=\{z_1\rho\nu_1+z_2\rho\nu_2+...+z_d\rho\nu_d : (z_1,...,z_d)\in \mathbb Z^d\}$$ and $Q_{\rho\overline{\nu}}$ as the cube generated by the orthogonal basis $\{\rho\nu_1,...,\rho\nu_d\}$. With fixed $\rho\in(0,1)$, we set 
\[
\mathcal{I}^{\e r}_{\rho\overline{\nu}}(\Omega):=\{k\in \e r\mathbb Z_{\rho\overline{\nu}}^d : k+\e r Q_{\rho\overline{\nu}} \subset \subset \Omega\},
\]
and for every $k\in \mathcal{I}^{\e r}_{\rho\overline{\nu}}(\Omega)$ we define
    \[
    u_\e^{\rho\overline{\nu}}(k)= \displaystyle \frac{1}{|\e r\rho|^d}\int_{k+\e rQ_{\rho\overline{\nu}}} u_\e\,dx.
\] 
By Jensen's inequality, we then have
\begin{eqnarray*}\nonumber
&&\hskip-2cm\int_{\Omega_\e^r}\sum_{n=1}^d\frac{|u_\e(x+\e r\rho\nu_n)-u_\e(x)|^2}{|\e r\rho|^2} dx\\
 &\ge&\sum_{k\in \e r \mathbb Z^d_{\rho\overline{\nu}}}\sum_{n=1}^d|\e r\rho|^d\frac{|u_\e^{\rho\overline{\nu}}(k+\nu_n)-u_\e^{\rho\overline{\nu}}(k)|^2}{|\e r\rho|^2} dx.
\end{eqnarray*}

{\bf Step 4.} {\em Estimate with piecewise-affine interpolations.}

\smallskip
We still let $u_\e^{\rho\overline{\nu}}$ denote the piecewise-affine interpolation of the discrete function $u_\e^{\rho\overline{\nu}}$ defined on $\mathcal{I}^{\e r}_{\rho\overline{\nu}}(\Omega)$, extended through a standard Kuhn decomposition (see, e.g.~\cite[Remark~6]{BBD}). This function is well defined on a slightly smaller set of the form $\Omega_\e^{cr}$ for some $c$ depending only on $d$. We then have
\begin{eqnarray*}\nonumber
\sum_{k\in \e r \mathbb Z^d_{\rho\overline{\nu}}}\sum_{n=1}^d|\e r\rho|^d\frac{|u_\e^{\rho\overline{\nu}}(k+\nu_n)-u_\e^{\rho\overline{\nu}}(k)|^2}{|\e r\rho|^2} dx\ge
 \int_{\Omega_\e^{cr}}|\nabla u_\e^{\rho\overline{\nu}}|^2dx.
\end{eqnarray*}
From \eqref{Proof2} we then obtain
\begin{eqnarray}\label{Proof3}
&&\frac{1-s}{\e^{3-2s}}\lfloor u_\e\rfloor_s^2(\Omega_\e)
\ge \frac{r^{2(1-s)}}\e\frac{\sigma_{d}}{2d}\int_{(0,1)\times V}\int_{\Omega_\e^{cr}}|\nabla u_\e^{\rho\overline{\nu}}|^2dxd\mu_\e,
\end{eqnarray}
where $\mu_\e$ denotes the probability measure on $(0,1)\times V$ defined as follows
$$ d\mu_\e=
\frac{2(1-s)}{\mathcal{H}^{k_d}(V)}  \frac{1}{\rho^{2s-1}} d\mathcal{H}^{k_d}(\overline{\nu}) d\rho.
$$

{\bf Step 5.} {\em Estimate with a sequence in $H^1(\Omega_\e)$.}

\smallskip
We define the functions
$$
u^r_\e(x)=\int_{(0,1)\times V}u_\e^{\rho\overline{\nu}}(x) d\mu_\e(\rho,\overline\nu),
$$
where we have highlighted the dependence on $r$ of the interpolations, and note that
$$
\nabla u^r_\e(x)=\int_{(0,1)\times V}\nabla u_\e^{\rho\overline{\nu}}(x) d\mu_\e(\rho,\overline\nu)
$$
 in the sense of distributions.
An application of Jensen's inequality in \eqref{Proof3} then gives
\begin{eqnarray}\label{Proof31}
&&\frac{1-s}{\e^{3-2s}}\lfloor u_\e\rfloor_s^2(\Omega_\e)
\ge \frac{r^{2(1-s)}}\e\frac{\sigma_{d}}{2d}\int_{\Omega_\e^{cr}}|\nabla u^r_\e|^2dx.
\end{eqnarray}

{\bf Step 6.} {\em Compactness of comparison functions.}

\smallskip
We can apply compactness Lemma \ref{local-lemma} with $\omega$ replaced by any $\omega'$ compactly contained in $\omega$ and with $(0,1)$ replaced by $(cr,1-cr)$. We then obtain that the scaled functions $v_\e^r(x)=u_\e^r(x',\e x_d)$, up to subsequences and addition of constants, converge in $L^2(\omega'\times(cr,1-cr))$ to a function $v^r(x)=u^r(x')$ with $u^r\in H^1(\omega')$.

\medskip
{\bf Step 7.} {\em Compactness of the original sequence.}

\smallskip
We have
\begin{equation}\label{claim-1}
\frac1\e\int_{\omega'\times(\e cr,\e(1-cr))} |u_\e-u^r_\e|dx\le I^1_\e+I^2_\e,
\end{equation}
where
\begin{eqnarray*}
   &&I^1_\e:= \frac1\e \int_{[0,1]\times V} \sum_{{k \in {\mathring{\mathcal I}}^{\e r}_{\rho\overline{\nu}}(\Omega_\e)}} \int_{k+{\e r}Q_{\rho\overline{\nu}}} |u_\e(x) - u^{\rho\overline{\nu}}_\e(k)|\,dx\,d\mu_\e(\rho,\overline{\nu}) \\ \label{L1conv2}
&&I^2_\e:= \frac1\e \int_{[0,1]\times V} \sum_{{k \in {\mathring{\mathcal I}}^{\e r}_{\rho\overline{\nu}}(\Omega_\e)}} \int_{k+{\e r}Q_{\rho\overline{\nu}}} |u^{\rho\overline{\nu}}_\e(x) - u^{\rho\overline{\nu}}_\e(k)|\,dx\,d\mu_\e(\rho,\overline{\nu}),
\end{eqnarray*}
and
\[
{\mathring{\mathcal I}}^{\e r}_{\rho\overline{\nu}}(\Omega):=\{k\in \e r\mathbb Z_{\rho\overline{\nu}}^d : k+\e rc Q_{\rho\overline{\nu}} \subset \subset \Omega\}.
\]

To give a bound on $I^1_\e$ and $I_\e^2$, we can proceed as in \cite[Section 3.1]{BBD}, using the refined lower estimate in \eqref{Proof2}.

Using  a  scaled Poincar\'e--Wirtinger inequality (see, e.g. 
\cite[Theorem 6.33]{leofrac}), we have that
\begin{eqnarray*}\label{attempt4} && \hskip-1cm\nonumber
    \sum_{{k \in {\mathring{\mathcal I}}^{\e r}_{\rho\overline{\nu}}(\Omega_\e)}} \int_{k+{\e r}Q_{\rho\overline{\nu}}} |u_\e(x) - u_\e^{\rho\overline{\nu}}(k)|\,dx  \\ \nonumber 
    &\leq& P|{\e r}\rho|^{\frac{d}{2}+s} \sum_{k \in {\mathring{\mathcal I}}^{\e r}_{\rho\overline{\nu}}(\Omega_\e)}\bigg(\int_{(k+{\e r}Q_{\rho\overline{\nu}})^2}\frac{|u_\e(x)-u_\e(y)|^2}{|x-y|^{d+2s}}\,dxdy\bigg)^{1/2},
    \end{eqnarray*} 
where $P$ is the Poincar\'e--Wirtinger constant for the  $d$-dimensional unit cube. By this estimate, 
using the concavity of the square root and that $\#{\mathring{\mathcal I}}^{\e r}_{\rho\overline{\nu}}(\Omega_\e)\sim \frac{\e|\omega|}{\e^d\rho^{d}r^d}$, we then have 
\begin{eqnarray}\label{I1}
I^1_\e\le P \frac1\e \e^{s} r^s\,2^{1-\frac{d}{2}}\e^{\frac12}|\omega|^{\frac12}(1-s) \lfloor u_\e\rfloor_{H^{s}(\Omega)} \frac{1}{2-s}\ 
= P r^s\e\sqrt{1-s}\,2^{1-\frac{d}{2}}|\omega|^{\frac12} \frac{1}{2-s}  \sqrt{S}.
\end{eqnarray}

Regarding $I^2_\e$, we note that
$$
\frac1\e\int_{k+{\e r}Q_{\rho\overline{\nu}}} |u^{\rho\overline{\nu}}_\e(x) - u_\e^{\rho\overline{\nu}}(k)|\,dx\le r\rho \sqrt d\int_{k+rQ_{\rho\overline{\nu}}} |\nabla u^{\rho\overline{\nu}}_\e(x)|\,dx.
$$
This implies, using \eqref{Proof2}, that
\begin{eqnarray}\label{I2}
I^2_\e\le  r\sqrt\e \sqrt{\frac{d}{2^{d}}|\omega|\int_{[0,1]\times V} \sum_{k \in {\mathring{\mathcal I}}^{\e r}_{\rho\overline{\nu}}(\Omega_\e)}\int_{k+\e r Q_{\rho\overline{\nu}}} |\nabla u_\e^{\rho\overline{\nu}}(x)|^2 dx\, d\mu_\e(\rho,\overline{\nu}) }
\le\e\,   r^{2s-1} \sqrt{\frac{d|\omega|}{2^d{c_d}}S}.
\end{eqnarray}
From \eqref{claim-1}, \eqref{I1}, and \eqref{I2} we obtain that $u_\e\to u^ r$ in $L^1$. In particular, we see that $u^r$ is independent of $ r$, and that the convergence of the corresponding $v_\e$ is in $L^1_{\rm loc}(\omega\times(0,1))$.
\end{proof}

\begin{remark}[`Sub-optimal' lower bound]\label{sub-rem}
\rm  Let $s=s_\e\to s_0>1/2$ and let $u_\e\to u$ in $L^1$, with $u\in H^1(\omega)$. Then from \eqref{Proof31} and by the arbitrariness of $\omega'$ we have
\begin{eqnarray}\label{Proof7}
&&\liminf_{\e\to 0}\frac{1-s}{\e^{3-2s}}\lfloor u_\e\rfloor_s^2(\Omega_\e)
\ge r^{2(1-s_0)}(1-2cr)\frac{\sigma_{d}}{2d}\int_{\omega}|\nabla' u|^2dx'
\end{eqnarray}
for all $r\in(0,1/2c)$, where $c$ is the dimensional constant in Step 4 of the proof of Theorem \ref{nlcth-1}. This lower bound will be improved in the case $s_0<1$.
\end{remark}

\subsubsection{A Bourgain--Brezis--Mironescu-type result for $s\to 1^-$ }\label{BBM1}
The lower bound obtained in the previous section turns out to be sharp in the case $s_\e\to 1$.
As a consequence, we have the following Bourgain--Brezis--Mironescu-type result.

\begin{theorem}[Dimension-reduction Gamma-limit for $s\to 1$]\label{Gammath} Let $s=s_\e\to 1^-$ as $\e\to 0$. 
Then we have
$$
\Gamma\hbox{-}\lim_{\e\to 0} \frac{1-s}{\e^{3-2s}}\lfloor u\rfloor_s^2(\Omega_\e)= \frac{\sigma_{d}}{2d}\int_\omega |\nabla'u|^2dx'
$$
for all $u\in H^1(\omega)$, where the $\Gamma$-limit is computed with respect to the dimension-reduction convergence in $L^1_{\rm loc}$.
\end{theorem}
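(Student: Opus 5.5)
The proof splits into a liminf inequality, which comes almost for free from the compactness argument, and a limsup inequality, which I would obtain by choosing an optimal cutoff in a direct computation on functions independent of $x_d$.

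\emph{Lower bound.} Let $u_\e$ converge to $u$ in the dimension-reduction sense in $L^1_{\rm loc}$, and assume the energies are bounded along a subsequence (otherwise there is nothing to prove). Theorem \ref{nlcth-1} then gives $u\in H^1(\omega)$, and Remark \ref{sub-rem} applies. Since $s_\e\to 1$, the factor $r^{2(1-s_0)}$ in \eqref{Proof7} equals $1$, so for every $r\in(0,1/2c)$
$$
\liminf_{\e\to0}\frac{1-s}{\e^{3-2s}}\lfloor u_\e\rfloor_s^2(\Omega_\e)\ge (1-2cr)\frac{\sigma_d}{2d}\int_\omega|\nabla'u|^2dx'.
$$
Letting $r\to0$ gives the sharp liminf inequality.

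\emph{Upper bound.} By density and lower semicontinuity of the $\Gamma$-limsup, it suffices to treat $u\in C^2(\overline{\omega})$, or Lipschitz $u$ extended to a neighbourhood of $\omega$. I take the trivial recovery sequence $u_\e(x',x_d)=u(x')$ and split $\lfloor u_\e\rfloor^2_s(\Omega_\e)$ into the interactions with $|x-y|<K\e$ and those with $|x-y|\ge K\e$, where $K=K_\e\to\infty$ is chosen below.

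For the far part, \eqref{geu-1.5} with $s_0=1$ gives a contribution of order $C\e^2(K\e)^{1-2s}/(2s-1)$. After multiplying by $(1-s)/\e^{3-2s}$ this becomes $C(1-s)K^{1-2s}$, which tends to $0$ whenever $K\ge1$, since $1-s\to0$.

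For the near part I use a Taylor expansion, $(u(x)-u(y))^2=(\nabla'u(x')\cdot(x'-y'))^2+O(|x-y|^3)$. The error term contributes at most $C\e(K\e)^{3-2s}$, which after rescaling is $C(1-s)K^{3-2s}\e$; this is negligible if, for instance, $K=\e^{-1/2}$. For the main term I fix $x$ and integrate $y$ over $\Omega_\e\cap B_{K\e}(x)$. The crude bound is the full ball; after angular averaging,
$$
\int_{B_{K\e}}\frac{(a\cdot\xi')^2}{|\xi|^{d+2s}}\,d\xi=|a|^2\frac{\sigma_d}{d}\cdot\frac{(K\e)^{2-2s}}{2-2s},
$$
which after rescaling gives $\frac{\sigma_d}{2d}K^{2-2s}\int_\omega|\nabla'u|^2$. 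This is too crude, because $\Omega_\e\cap B_{K\e}(x)$ is only a slab of height $\e$, not a ball.

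The step I expect to be the main obstacle is making the near-range estimate sharp. One way is to choose $K=1$ (or $K$ fixed) and bound the near part by the ball integral, which gives the limsup constant $\frac{\sigma_d}{2d}K^{2-2s}$. Since $s\to1$, $K^{2-2s}\to1$ for fixed $K$, while the far part still tends to $0$ for fixed $K$. The Taylor error term is also harmless for fixed $K$, since it is $O(\e)$ after rescaling. So no growing $K$ is needed: the upper estimate is $\frac{\sigma_d}{2d}\int_\omega|\nabla'u|^2+o(1)$, up to boundary effects near $\partial\omega\times(0,\e)$. Those boundary effects are bounded by the same estimates on a strip of width $K\e$ and vanish in the limit.

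Both bounds match, so the $\Gamma$-limit is $\frac{\sigma_d}{2d}\int_\omega|\nabla'u|^2dx'$, as stated in Theorem \ref{Gammath}.
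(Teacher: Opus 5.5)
Your proposal is correct and follows the paper's proof essentially step for step. The liminf comes from Remark \ref{sub-rem} with $r^{2(1-s_0)}=1$ and $r\to0$. The limsup uses the recovery sequence constant in $x_d$, a Taylor expansion, and the full-ball bound $\frac{\sigma_d}{2d}K^{2-2s}\int_\omega|\nabla'u|^2\to\frac{\sigma_d}{2d}\int_\omega|\nabla'u|^2$, together with the fact that the far interactions contribute $O(1-s)$ after rescaling.

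The only slip is that \eqref{geu-1.5} controls the cylinder region $|x'-y'|>(K-1)\e$, so its bound involves $(K-1)^{1-2s}$ rather than $K^{1-2s}$. Hence you must take a fixed $K>1$ rather than $K=1$. Alternatively, as the paper does, treat the annulus $r\e<|x-y|<2\e$ separately through the ball integral, which contributes $C(2^{2-2s}-r^{2-2s})\to0$.
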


\begin{proof} Let $u_\e\buildrel{\hbox {\rm\scriptsize DR }\, \, }\over{\longrightarrow} u$ with respect to the local-$L^1$ dimension-reduction convergence. From Remark \ref{sub-rem} we then obtain 
$$
\liminf_{\e\to 0}\frac{1-s}{\e^{3-2s}}\lfloor u_\e\rfloor_s^2(\Omega_\e)
\ge (1-2cr)\frac{\sigma_{d}}{2d}\int_{\omega}|\nabla' u|^2dx',
$$
which gives the desired lower bound by letting $r\to0$.

As for the upper bound, by a  density argument it suffices to consider $u\in C^2(\overline\omega)$.
Let then $u\in C^2(\overline\omega)$, and with an abuse of notation, let $u$ also denote the function $u=u(x')$, independent of the $d$-th variable, which we view as an element of  $H^s(\Omega_\e)$. 
We first note that 
\begin{equation}\label{8}
\limsup_{\e\to 0} \frac{1-s}{\e^{3-2s}} \int_{\{(x,y)\in\Omega_\e: |x-y|>r\e\}}\frac{|u(x)-u(y)|^2}{|x-y|^{d+2s}} dx\,dy=0
\end{equation}
for all $r>0$.
Indeed, if $L$ is such that $|u(x)-u(y)|\le L|x-y|$, then,
noting that if $|x-y|\ge 2\e$ then $|x'-y'|>\e$, we have
\begin{eqnarray*}&&
\hskip-1.5cm\int_{\{(x,y)\in\Omega_\e\times\Omega_\e: |x-y|>r\e\}}\frac{|u(x)-u(y)|^2}{|x-y|^{d+2s}} dx\,dy\\
&\le& \int_{\{(x,y)\in\Omega_\e\times\Omega_\e: |x-y|>r\e\}}L^2|x-y|^{2-d-2s} dx\,dy\\
&\le& C\Big(\int_{\Omega_\e} \int_{\{r\e<|\xi|<2\e\}}|\xi|^{2-d-2s} d\xi\,dy\\
&&+
\int_{\Omega_\e}\int_{\{y\in\Omega_\e: |x'-y'|>\e\}}|x'-y'|^{2-d-2s} dx\,dy\Big)\\\\
&\le& C\Big( \mathcal H^{d-1}(S^{d-1})\e|\omega|\Big( \int_{r\e}^{2\e}t^{1-2s} dt\Big)^+ +
\mathcal H^{d-2}(S^{d-2})\e^2 |\omega| \int_{2\e}^{+\infty}t^{-2s}dt\Big)
\\
&\le& C\Big(\frac{\e}{2(1-s)}((r\e)^{2-2s}-(2\e)^{2-2s})^++\frac{\e^2}{2s-1}(2\e)^{1-2s}\Big)\\
&\le& C\Big( \frac{\e^{3-2s}}{1-s}(r^{2-2s}-2^{2-2s})^++ \e^{3-2s}\Big).
\end{eqnarray*}
Note that we have implicitly used the fact that $s>1/2$.
Hence, we have
\begin{eqnarray}\label{a1}\nonumber
&&\hskip-2cm\frac{1-s}{\e^{3-2s}} \int_{\{(x,y)\in\Omega_\e: |x-y|>r\e\}}\frac{|u(x)-u(y)|^2}{|x-y|^{d+2s}} dx\,dy\\
&\le& C\big((r^{2-2s}-2^{2-2s})^++ 1-s\big).
\end{eqnarray}
Letting $s\to 1$, we have \eqref{8}.

From \eqref{8}, we obtain that the asymptotic behaviour of $\frac{1}{\e^{3-2s}}F_{\e,s}(u)$ is the same as that
of
$$
\frac{1-s}{\e^{3-2s}} \int_{\{(x,y)\in\Omega_\e: |x-y|<r\e\}}\frac{|u(x)-u(y)|^2}{|x-y|^{d+2s}} dx\,dy
$$
with truncated range of interactions.

We now simplify the asymptotic analysis when $|x-y|<r\e$. We can write
$$
u(x)-u(y)= \langle \nabla u(x), x-y\rangle + O(|x-y|^2)
$$
uniformly in $x$, so that, with fixed $\eta>0$
\begin{eqnarray*}
\big||u(x)-u(y)|^2- |\langle \nabla u(x), x-y\rangle|^2\big|
&\le& \eta|\langle \nabla u(x), x-y\rangle|^2 +C_\eta |x-y|^4\\
&\le& \eta C|x-y|^2 +C_\eta |x-y|^4,
\end{eqnarray*}
and
\begin{eqnarray}\label{a2}\nonumber
&&\hskip-1.5cm\bigg|\int_{\{(x,y)\in\Omega_\e: |x-y|<r\e\}}\frac{|u(x)-u(y)|^2}{|x-y|^{d+2s}} dx\,dy
\\ \nonumber
&& -\int_{\{(x,y)\in\Omega_\e: |x-y|<r\e\}}\frac{|\langle \nabla u(x), x-y\rangle|^2}{|x-y|^{d+2s}} dx\,dy\bigg|\\ \nonumber
&\le&  \eta C\e|\omega|\int_{|\xi|<r\e\}}{|\xi|^{2-d-2s}} d\xi+C_\eta \e|\omega|\int_{|\xi|<r\e\}}{|\xi|^{4-d-2s}} dx\,dy\\\nonumber
&\le&  C\Big( \eta \frac{1}{1-s}\e^{3-2s}+C_\eta \e^{5-2s}\Big)\\
&=&  C\frac{1}{1-s}\e^{3-2s}\Big(  \eta+ C_\eta (1-s)\e^2\Big).
\end{eqnarray}
Letting $\e\to 0^+$  first, by the arbitrariness of $\eta$ and this estimate, together with \eqref{8}, we also have that the asymptotic behaviour of $\frac{1-s}{\e^{3-2s}}\lfloor u\rfloor_s^2(\Omega_\e)$ is the same as that
of
$$
\frac{1-s}{\e^{3-2s}} \int_{\{(x,y)\in\Omega_\e: |x-y|<r\e\}}\frac{|\langle \nabla u(x), x-y\rangle|^2}{|x-y|^{d+2s}} dx\,dy.
$$

We now take $r<\frac12$, so that 
\begin{eqnarray*}
&&\hskip-1.5cm\int_{\{(x,y)\in\Omega_\e: |x-y|<r\e\}}\frac{|\langle \nabla u(x), x-y\rangle|^2}{|x-y|^{d+2s}} dx\,dy\\
&\ge &\int_{\omega\times (r\e,(1-r)\e)}\int_{B_{r\e}(x)}\frac{|\langle \nabla u(x), x-y\rangle|^2}{|x-y|^{d+2s}}\,dy\,dx\\
&=&\int_{\omega\times (r\e,(1-r)\e)}\int_{B_{r\e}}\frac{|\langle \nabla u(x), \xi\rangle|^2}{|\xi|^{d+2s}}\,d\xi\,dx\\
&=&\int_{\omega\times (r\e,(1-r)\e)}|\nabla u(x)|^2\frac1d\int_{B_{r\e}}|\xi|^{2-d-2s}\,d\xi\,dx\\
&=&\int_{\omega\times (r\e,(1-r)\e)}|\nabla u(x)|^2c_d(r\e)^{2-2s}\,dx\\
&=&(1-2r)r^{2-2s}\frac{\e^{3-2s}}{1-s}\frac{\sigma_{d}}{2d}\int_{\omega}|\nabla' u(x')|^2dx'.
\end{eqnarray*}
Between the third and fourth line of the previous formula, we have used the remark that, by the symmetry of the domain of integration, we have 
\begin{eqnarray*}
\int_{B_{r\e}}\frac{|\langle \nabla u(x), \xi\rangle|^2}{|\xi|^{d+2s}}\,d\xi=|\nabla u(x)|^2 \int_{B_{r\e}}\frac{|\langle e_j, \xi\rangle|^2}{|\xi|^{d+2s}}\,d\xi
=|\nabla u(x)|^2 \frac1d\int_{B_{r\e}}\frac{|\xi|^2}{|\xi|^{d+2s}}\,d\xi
\end{eqnarray*}
for all elements of the canonical basis $\{e_1,\ldots, e_d\}$.
Hence,
\begin{equation}\label{9}
\liminf_{\e\to 0}\frac{1-s}{\e^{3-2s}}\lfloor u\rfloor_s^2(\Omega_\e)\ge (1-2r)\frac{\sigma_{d}}{2d}\int_{\omega}|\nabla' u(x')|^2dx'
\end{equation}
for all $r<\frac12$. Conversely, for all $r>0$ we have
\begin{eqnarray}\label{a3}\nonumber
&&\hskip-1.5cm\int_{\{(x,y)\in\Omega_\e: |x-y|<r\e\}}\frac{|\langle \nabla u(x), x-y\rangle|^2}{|x-y|^{d+2s}} dx\,dy\\ \nonumber
&\le &\int_{\omega\times (0,\e)}\int_{B_{r\e}(x)}\frac{|\langle \nabla u(x), x-y\rangle|^2}{|x-y|^{d+2s}}\,dy\,dx
=r^{2-2s}\frac{\e^{3-2s}}{1-s}\frac{\sigma_{d}}{2d}\int_{\omega}|\nabla' u(x')|^2dx',
\end{eqnarray}
repeating the same computations as above, so that 
\begin{equation}\label{10}
\limsup_{\e\to 0}\frac{1}{\e^{3-2s}}F_{\e,s}(u)\le \frac{\sigma_{d}}{2d}\int_{\omega}|\nabla' u(x')|^2dx'.
\end{equation}
The desired limsup ineuality follows from \eqref{9} and \eqref{10} by letting $r\to 0$.\end{proof}

\subsubsection{Convergence for $s_0\in(1/2,1)$}
We complete our computations of $\Gamma$-limits in the case $s_0>1/2$.

\begin{theorem}[Dimension-reduction Gamma-limit in the super-critical regime]\label{Gammaths0} Let $s=s_\e\to s_0\in(1/2,1)$ as $\e\to 0$. 
Then we have
$$
\Gamma\hbox{-}\lim_{\e\to 0} \frac{1}{\e^{3-2s_\e}}\lfloor u\rfloor_{s_\e}^2(\Omega_\e)= \frac1{1-s_0} K_{s_0,d}\int_\omega |\nabla'u|^2dx'
$$
for all $u\in H^1(\omega)$, where the $\Gamma$-limit is computed with respect to the dimension-reduction convergence in $L^1_{\rm loc}$, and
\begin{equation}
K_{s,d}=  \frac{1}{(3-2s)(d-1)} \int_{
 \mathbb R^{d-1}}\frac{|\xi'|^2}{(1+|\xi'|^2)^{\frac d2+s}}d\xi'.
\end{equation}
\end{theorem}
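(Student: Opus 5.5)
The plan is to prove the $\limsup$ inequality by an explicit computation on functions independent of $x_d$. The $\liminf$ inequality will come from a two-scale weak-limit argument on rescaled difference quotients. In that argument a possible vertical corrector is eliminated by the evenness of the kernel in the planar variable.

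\emph{Upper bound.} By density it suffices to take $u\in C^2(\overline\omega)$, and to use the trivial recovery sequence $u_\e(x)=u(x')$. I split the interactions into $|x'-y'|\ge\delta$ and $|x'-y'|<\delta$.
\begin{itemize}
\item For $|x'-y'|\ge\delta$, the interactions contribute $O(\e^2)=o(\e^{3-2s})$ because $s_0>1/2$, exactly as in the heuristic of Section~\ref{heu}.
\item For $|x'-y'|<\delta$, I Taylor expand $u(y')-u(x')=\langle\nabla'u(x'),y'-x'\rangle+O(|x'-y'|^2)$. The remainder contributes $\e^2\int_{|\xi'|<\delta}|\xi'|^{4-d-2s}\,d\xi'=O(\e^2)$.
\end{itemize}
On the main term I change variables $x_d=\e t$, $y_d=\e\tau$, $y'-x'=\e\xi'$. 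This gives
$$
\e^{3-2s}\int_\omega\int_0^1\!\!\int_0^1\int_{\{|\xi'|<\delta/\e\}}\frac{|\langle\nabla'u(x'),\xi'\rangle|^2}{(|t-\tau|^2+|\xi'|^2)^{\frac d2+s}}\,d\xi'\,dt\,d\tau\,dx'.
$$
I then let $\e\to0$, which is legitimate because $|\xi'|^{2-d-2s}$ is integrable at infinity in $\mathbb R^{d-1}$ exactly when $s>1/2$. By radial symmetry, $|\langle g,\xi'\rangle|^2$ may be replaced by $\frac{|g|^2}{d-1}|\xi'|^2$. The substitution $\xi'=|t-\tau|\zeta$ then yields a factor $|t-\tau|^{1-2s}$. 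Finally $\int_0^1\!\int_0^1|t-\tau|^{1-2s}\,dt\,d\tau=\frac{1}{(1-s)(3-2s)}$ produces exactly $\frac1{1-s_0}K_{s_0,d}\int_\omega|\nabla'u|^2\,dx'$. The continuity of everything in $s_\e\to s_0$ is routine.

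\emph{Lower bound.} Let $\e^{2s-3}\lfloor u_\e\rfloor^2_{s}(\Omega_\e)\le S$ with $u_\e\buildrel{\hbox {\rm\scriptsize DR }\, \, }\over{\longrightarrow} u$. By Theorem~\ref{nlcth-1} we know $u\in H^1(\omega)$, and we may truncate so that $u_\e$ is bounded in $L^\infty$. In rescaled variables I write the energy, for fixed $\omega'\subset\subset\omega$ and a truncation $|h|<M$, as
$$
\e^{2s-3}\lfloor u_\e\rfloor^2_s\ \ge\ \int_{\{|h|<M\}}\int_0^1\!\!\int_0^1\frac{1}{(|t-\tau|^2+|h|^2)^{\frac d2+s}}\int_{\omega'}|D_\e(x',t,\tau,h)|^2\,dx'\,dt\,d\tau\,dh,
$$
where $D_\e=\e^{-1}\big(v_\e(x'+\e h,\tau)-v_\e(x',t)\big)$.
\begin{itemize}
\item The weighted $L^2$ norm of $D_\e$ in all variables is bounded by $S$. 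Hence, up to subsequences, $D_\e\weak W$ weakly in the weighted $L^2$ space.
\item To identify $W$, I test against $\varphi(x')\psi(t,\tau,h)$ and split $D_\e$ into two pieces:
$$
D_\e=\frac{v_\e(x'+\e h,\tau)-v_\e(x',\tau)}{\e}+\frac{v_\e(x',\tau)-v_\e(x',t)}{\e}.
$$
The first piece, after moving the shift onto $\varphi$ and using strong $L^1_{\rm loc}$ convergence plus the $L^\infty$ bound, tends to $\langle\nabla'u,h\rangle$.
\item The second piece is independent of $h$ and bounded in the weighted space. This bound follows from Lemma~\ref{vert-lemma}: it gives $\int\!\!\int\!\!\int|v_\e(\tau)-v_\e(t)|^2|t-\tau|^{-1-2s}\le C\e^{2}$, after mollification in $x'$ as in the proof of Theorem~\ref{fulldim}. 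Since this piece is a difference of a function of $\tau$ and a function of $t$, its weak limit should have the form $a(x',\tau)-a(x',t)$.
\end{itemize}
By weak lower semicontinuity,
$$
\liminf\ \ge\ \int_{\omega'}\int\!\!\int\!\!\int\frac{|\langle\nabla'u,h\rangle+a(x',\tau)-a(x',t)|^2}{(|t-\tau|^2+|h|^2)^{\frac d2+s}}.
$$
Expanding the square, the cross term vanishes because the kernel is even in $h$ and $a(\tau)-a(t)$ does not depend on $h$. The remaining term $\int|a(\tau)-a(t)|^2\cdots$ is nonnegative and can be dropped. What is left is the same integral as in the upper bound, truncated to $|h|<M$. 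Letting $M\to\infty$ and $\omega'\uparrow\omega$ gives the claim.

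The main obstacle I expect is the rigorous identification of the weak limit of the vertical piece as $a(\tau)-a(t)$. The vertical bound is weighted by $|t-\tau|^{-1-2s}$ rather than uniform, so an antisymmetric weak limit need not obviously split this way. If the representation cannot be proved, I would avoid it: it is enough to show that the limit $B(x',t,\tau)$ of the second piece is independent of $h$. Then the cross term $\int\langle\nabla'u,h\rangle B\,K(h,t-\tau)$ still vanishes by the evenness of $K$ in $h$, which is all that is needed.
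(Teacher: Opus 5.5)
Your upper bound is the paper's argument: trivial recovery sequence $u(x')$, discarding planar distances $\gg\e$ (possible because $s_0>1/2$), Taylor expansion, radial symmetry, and the substitution $\xi'=|t-\tau|\zeta$. The paper cuts at $|x'-y'|<K\e$ rather than at $\delta$, which is immaterial.

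Your lower bound takes a genuinely different route.
\begin{itemize}
\item \emph{The paper's route.} It removes the vertical dependence pointwise in $\xi'$ by Jensen's inequality with respect to the symmetric probability measure $\mu^{\xi'}_\e$ on $(0,\e)^2$. Symmetry makes the two marginals coincide, so only planar differences of the vertically averaged functions $u^{\xi'}_\e$ survive. It then turns these $(d-1)$-dimensional difference quotients into gradients of piecewise-affine interpolations on lattices aligned with $\xi'$, and concludes by weak $H^1$ lower semicontinuity after a further averaging.
\item \emph{Your route.} You treat the rescaled quotient $D_\e$ as a bounded sequence in the weighted space $L^2(k\,dx'\,dt\,d\tau\,dh)$. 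You identify the weak limit of its planar part by duality as $\langle\nabla'u,h\rangle$. You then discard the vertical part by orthogonality. Its weak limit $B$ is independent of $h$, because $h$-independent functions form a closed, hence weakly closed, subspace; so the representation $a(\tau)-a(t)$ you worried about is not needed. The cross term then vanishes by oddness in $h$, or equally by antisymmetry of $B$ in $(t,\tau)$, which is the same symmetry the paper exploits through $\mu^{\xi'}_\e$.
\end{itemize}

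What each approach buys:
\begin{itemize}
\item Yours avoids the lattice/Kuhn interpolation machinery and the only-sketched ``further average''.
\item Yours also exhibits the defect $\int|B|^2k$ as the energy paid by vertical oscillations at scale $\e$.
\item The price is that you must bound the vertical increments separately, which the Jensen route never needs.
\item You also rely on the Hilbert structure. For $p\neq2$ you would replace orthogonality by symmetrizing $h\mapsto-h$ and using $\frac12(|x+b|^p+|x-b|^p)\ge|x|^p$.
\end{itemize}

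Two small corrections.
\begin{itemize}
\item The bound on the vertical piece should come from the slicing Lemma \ref{slixd}, not from Lemma \ref{vert-lemma}. Estimate \eqref{slixd-eq} gives $\int_\omega\int_0^1\int_0^1|v_\e(x',t)-v_\e(x',\tau)|^2|t-\tau|^{-1-2s}\le C\e^{2s-1}\lfloor u_\e\rfloor_s^2(\Omega_\e)\le CS\e^2$ directly for $v_\e$. Integrating the kernel in $h$ via \eqref{rd1} then gives the weighted bound. Lemma \ref{vert-lemma} applies only to the mollified functions. As stated, it leaves an error $\e^{2s-2}\phi(\e)/\eta$ after scaling, with only $\phi(\e)\to0$ known, which is not controlled without an extra rate estimate.
\item Since $s=s_\e$ varies, the weight $k$ depends on $\e$. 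Run the weak-limit argument on $\{(|t-\tau|^2+|h|^2)^{1/2}>\eta\}$. There $k_{s_\e}/k_{s_0}\to1$ uniformly, and the $h$-sections are still symmetric, so the cross term still vanishes. Then let $\eta\to0$.
\end{itemize}
With these adjustments your argument is complete.
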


The form of the coefficient $K_{s,d}$ highlights a combination of planar interactions, which give the integral in $\mathbb R^{d-1}$, and of interactions in the vertical directions weighted by $s$, whose integral gives the coefficient $1/(3-2s)$.

\begin{remark}[Comparison with $C_{s,d}$]\label{Ksd}\rm 
The coefficients $C_{s,d}$ and $K_{s,d}$ are related as follows \begin{equation}\label{CK}
K_{s,d}=\frac{1}{(2s-1)(3-2s)}C_{s,d}.
\end{equation}
For the sake of readability, the proof is postponed to the Appendix.
\end{remark}

\begin{proof}[Proof of Theorem \rm\ref{Gammaths0}] We use the notation
$$
F^s_\e(u)=\frac{1}{\e^{3-2s}}\lfloor u\rfloor_{s}^2(\Omega_\e)
$$
{\em Lower bound.}
Let $u_\e\buildrel{\hbox {\rm\scriptsize DR }\, \, }\over{\longrightarrow} u\in H^1(\omega)$ in $L^1_{\rm loc}$. Up to restricting the lower bound to a slightly smaller thin film, we can suppose that the convergence of the functions $v_\e$, where $v_\e(x',x_d)= u_\e(x', \e x_d)$, to $u$ is in $L^2(\omega\times (0,\e))$. With fixed $K>0$ for each $\omega'$ compact subset of $\omega$ we can estimate
\begin{eqnarray}
\label{newest}
F^s_\e(u_\e)\ge\frac{1}{\e^{3-2s}}\int_{B_{K\e}}\int_{\omega'}
\int_0^\e\int_0^\e \frac{|u_\e(x'+\xi',y_d)-u_\e(x',x_d)|^2}{((x_d-y_d)^2+|\xi|^2)^{\frac d2+s}}dx' dx_d dy_d d\xi'
\end{eqnarray}
for $\e$ small enough.
For every $\xi'\neq 0$ we introduce the probability measures
$$
d\mu_\e^{\xi'}=\frac{1}{C_\e(\xi') ((x_d-y_d)^2+|\xi'|^2)^{\frac d2+s}}dx_d dy_d,
$$
where
\begin{equation}\label{defCexi}
C_\e(\xi')=\int_0^\e\int_0^\e\frac{1}{((x_d-y_d)^2+|\xi'|^2)^{\frac d2+s}}dx_d dy_d,
\end{equation}
and the averaged functions
$$
u^{\xi'}_\e(z)=\int_0^\e\int_0^\e u_\e(z,x_d)d\mu_\e(x_d,y_d)=\int_0^\e\int_0^\e u_\e(z,y_d)d\mu_\e(x_d,y_d).
$$
Note that we have $u^{\xi'}_\e\to u$ in $L^2(\omega)$. From Jensen's inequality we then deduce that
\begin{eqnarray*}
F^s_\e(u_\e)\ge\frac{1}{\e^{3-2s}}\int_{B_{K\e}}|\xi'|^2 C_\e(\xi') \int_{\omega'}
\frac{|u^{\xi'}_\e(x'+\xi')-u^{\xi'}_\e(x')|^2}{|\xi'|^2} dx' d\xi'.
\end{eqnarray*}
We use the notation $\overline u_\e^{\xi'}$ for the piecewise-affine interpolations on lattices aligned with the vector $\xi'$ (in $\mathbb R^{d-1}$ instead of $\mathbb R^d$), we then obtain 
\begin{eqnarray}\label{corpo}
F^s_\e(u_\e)\ge\frac{1}{\e^{3-2s}(d-1)}\int_{B_{K\e}}|\xi'|^2 C_\e(\xi') \int_{\omega''}
|\nabla'\overline u_\e^{\xi'}|^2 dx' d\xi',
\end{eqnarray}
up to restricting to a slightly smaller $\omega''$. Up to a further average, analogous to that in Step 5 of the proof of Theorem \ref{nlcth-1}, we can suppose that $\overline u_\e^{\xi'}\to u$ weakly in $H^1(\omega'')$, 
so that 
\begin{eqnarray}\label{newest-2}
\liminf_{\e\to 0}F^s_\e(u_\e)\ge\liminf_{\e\to 0}\frac{1}{\e^{3-2s}(d-1)}\int_{B_{K\e}}|\xi'|^2 C_\e(\xi') d\xi'
\int_{\omega''} |\nabla' u|^2 dx' .
\end{eqnarray}
By a change of variable and Fubini's Theorem, we can compute
\begin{eqnarray*}
\int_{B_{K\e}}|\xi'|^2 C_\e(\xi') d\xi'
&=&\int_0^\e\int_0^\e\int_{B_{K\e}}\frac{|\xi'|^2}{((x_d-y_d)^2+|\xi'|^2)^{\frac d2+s}}d\xi'dx_d dy_d
\\
&=&\e^{3-2s}\int_0^1\int_0^1|t-\tau|^{1-2s}\int_{B_{\frac{K}{|t-\tau|}}}\frac{|\xi^\prime|^2}{(1+|\xi^\prime|^2)^{\frac d2+s}}d\xi^\prime dt d\tau. 
\end{eqnarray*}
Noting that, by Lebesgue Dominated Convergence Theorem,  
\begin{eqnarray*}
&&\hskip-1cm\lim_{K\to+\infty}\int_0^1\int_0^1|t-\tau|^{1-2s}\int_{B_{\frac{K}{|t-\tau|}}}\frac{|\xi^\prime|^2}{(1+|\xi^\prime|^2)^{\frac d2+s}}d\xi^\prime dt d\tau\\
&&=\int_0^1\int_0^1|t-\tau|^{1-2s} dt d\tau\int_{\mathbb R^{d-1}}\frac{|\xi^\prime|^2}{(1+|\xi^\prime|^2)^{\frac d2+s}}d\xi^\prime \\
&&=
\frac1{(1-s)(3-2s)}\int_{\mathbb R^{d-1}}\frac{|\xi^\prime|^2}{(1+|\xi^\prime|^2)^{\frac d2+s}}d\xi^\prime, 
\end{eqnarray*}
we obtain
\begin{equation}\label{limK}
\lim_{K\to+\infty} \int_{B_{K\e}}|\xi'|^2 C_\e(\xi') d\xi'
=\frac{\e^{3-2s}}{(1-s)(3-2s)}\int_{\mathbb R^{d-1}}\frac{|\xi^\prime|^2}{(1+|\xi^\prime|^2)^{\frac d2+s}}d\xi^\prime. 
\end{equation}
Eventually, we then have the estimate 
$$
\liminf_{\e\to 0}F^s_\e(u_\e)\ge \frac{1}{1-s_0}K_{s_0,d}\int_{\omega''} |\nabla' u|^2 dx',
$$
and we can finally let $\omega''$ invade $\omega$. 
\smallskip

{\em Pointwise convergence and upper bound.}
We show a pointwise convergence result for $u\in C^2(\mathbb R^{d-1})$. 

Note that, if we fix $K>0$ arbitrary, then the contribution of the integral in the set $\{|x^\prime-y^\prime|\geq \e K\}$ is negligible as $\e\to 0$ with respect to the contribution in the complementary set. Indeed,
\begin{eqnarray*}
\int_{\{|x^\prime-y^\prime|\geq \e K\}} \frac{|u(x^\prime)-u(y^\prime)|^2}{|x-y|^{d+2s}}\, dx\, dy\leq 
L^2|\omega|\int_{\{\e K\leq |\xi^\prime|\}} C_\e(\xi^\prime)|\xi^\prime|^2 d\xi^\prime, 
\end{eqnarray*}
where $L$ is a Lipschitz constant for $u$ in $\omega$, and $C_\e(\xi')$ is defined in \eqref{defCexi}. 
Since $C_\e(\xi')\leq \e^{2}|\xi^\prime|^{-d-2s}$ and $s>\frac{1}{2}$, we obtain  
\begin{eqnarray*}
\frac{1}{\e^{3-2s}}\int_{\{|x^\prime-y^\prime|\geq \e K\}} \frac{|u(x^\prime)-u(y^\prime)|^2}{|x-y|^{d+2s}}\, dx\, dy &\leq &C\frac{1}{\e^{1-2s}}
\int_{\{|x^\prime-y^\prime|\geq \e K\}} |\xi^\prime|^{2-d-2s} d\xi^\prime\\
&=&C\frac{1}{\e^{1-2s}} \sigma_{d-1}\int_{\e K}^{+\infty} t^{-2s}\, dt\\
&=&C\frac{1}{2s-1} \sigma_{d-1}K^{1-2s}, 
\end{eqnarray*} 
where $C>0$ does not depend on $K$, $\e$ and $s$. 
Since $K$ can be chosen arbitrarily large and $1-2s<0$, we obtain that we can consider the $s$-seminorm only in the set $\{|x^\prime-y^\prime|<\e K\}$. 
Then, we can estimate
\begin{eqnarray*}
{F_\e^{s}(u)}\leq \frac{1}{\e^{3-2s}}\int_{B_{K\e}}C_\e(\xi^\prime)\int_{\omega}
|u(x'+\xi')-u(x')|^2 dx^\prime d\xi^\prime+o(1)_{K\to+\infty}, 
\end{eqnarray*}
obtaining by symmetry 
\begin{equation}\label{up}
{F_\e^{s}(u)}\leq 
\frac{1}{\e^{3-2s}(d-1)}\int_{B_{K\e}}|\xi'|^2 C_\e(\xi') d\xi^\prime \int_{\omega}
|\nabla u|^2 dx^\prime +o(1)_{K\to+\infty}.
\end{equation}
Since \eqref{limK} holds, the upper bound follows
by letting $K\to+\infty$ in \eqref{up}, and recalling the lower-bound estimate we obtain the pointwise convergence of $F_\e^s(u)$. 
\end{proof}

\begin{remark}[Continuity as $s_0\to 1$]\label{soo1}\rm
We note that, after scaling by $1-s_0$, the limit of the $\Gamma$-limits obtained in Theorem \ref{Gammaths0} as $s_0\to 1^-$ converge to the $\Gamma$-limit obtained in 
Theorem \ref{Gammath}. Since the functionals are equicoercive  this can be deduced by some general topological arguments (see \cite{DM}), but, since we have an explicit formula for the coefficients, it suffices to check that 
\begin{equation}\label{sot1}
\lim_{s\to 1^-}K_{s,d}=\frac{\sigma_d}{2d}.
\end{equation}
For the sake of readability, we postpone this computation to the Appendix.
\end{remark}

\subsection{The critical scaling $s=1/2$}
The behaviour at $s=1/2$ can be described by refining the case $s_\e>1/2$, which must be adjusted with a logarithmic correction. The corresponding result is the following.

\begin{theorem}[Dimension-reduction Gamma-limit in the critical regime]\label{Gammaths1}
Let $\{u_\e\}$ be such that 
$$
\sup_{\e}\frac{1}{\e^{2}|\log\e|}\lfloor u_\e\rfloor_{1/2}^2(\Omega_\e)<+\infty.
$$
Then, up to subsequences and addition of constants, $\{u_
\e\}$ converges to $u\in H^1(\omega)$ with respect to the dimension-reduction convergence in $L^2$-weak.
Furthermore, we have
$$
\Gamma\hbox{-}\lim_{\e\to 0} \frac{1}{\e^{2}|\log\e|}\lfloor u\rfloor_{1/2}^2(\Omega_\e)= \frac{
\sigma_{d-1}
}{d-1}\int_\omega |\nabla'u|^2dx'
$$
for all $u\in H^1(\omega)$, where the $\Gamma$-limit is computed with respect to the dimension-reduction convergence in $L^2$-weak.
\end{theorem}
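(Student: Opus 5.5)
The plan is to adapt the two regimes already treated, using the fact that at $s=\tfrac12$ the interactions between the scale $\e$ and a fixed scale $\delta$ contribute, up to a constant, $\e^2\int_{K\e}^{\delta}t^{-1}\,dt\sim\e^2|\log\e|$. Interactions at planar distance smaller than $K\e$ or larger than $\delta$ contribute only $O(\e^2)$. This is the heuristic of Section \ref{heu}, and all three parts of the proof (compactness, lower bound, upper bound) will be organized around the splitting of planar distances $|\xi'|=|x'-y'|$ into $(0,K\e)$, $(K\e,\delta)$ and $(\delta,+\infty)$.

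\emph{Compactness and lower bound.} Fix $K$ large. For $|\xi'|\ge K\e$ and $x_d,y_d\in(0,\e)$ we have $|x_d-y_d|^2+|\xi'|^2\le(1+K^{-2})|\xi'|^2$. Discarding the interactions with $|\xi'|<K\e$ and applying Jensen's inequality in $(x_d,y_d)$ with the uniform probability measure on $(0,\e)^2$, we therefore get
$$
\lfloor u_\e\rfloor^2_{1/2}(\Omega_\e)\ge (1+K^{-2})^{-\frac{d+1}{2}}\,\e^2\int_{\omega\times\omega,\ |x'-y'|>K\e}\frac{|\bar u_\e(x')-\bar u_\e(y')|^2}{|x'-y'|^{d+1}}\,dx'dy',
$$
where $\bar u_\e(x')=\frac1\e\int_0^\e u_\e(x',t)\,dt$ is the vertical average. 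After division by $\e^2|\log\e|$, the right-hand side equals $\int\!\!\int\frac{|\bar u_\e(x')-\bar u_\e(y')|^2}{|x'-y'|^2}\rho_\e(x'-y')$ up to the factor $\sigma_{d-1}$ and $o(1)$ corrections. Here $\rho_\e(\xi')=\frac{1}{|\log\e|}|\xi'|^{-(d-1)}\chi_{\{K\e<|\xi'|<1\}}$, which is, after normalization, a family of radial mollifiers concentrating at $0$; the tail $|\xi'|>1$ only helps. Hence Ponce's BBM-type compactness criterion \cite{ponce} applies. Up to subsequences and constants, $\bar u_\e\to u$ in $L^2_{\rm loc}(\omega)$ with $u\in H^1(\omega)$. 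Moreover, by Ponce's $\Gamma$-liminf, $\liminf$ of the scaled energy is at least
$$
(1+K^{-2})^{-\frac{d+1}{2}}\frac{\sigma_{d-1}}{d-1}\int_{\omega'}|\nabla'u|^2\,dx'.
$$
The factor $1/(d-1)$ comes from averaging $|\langle\nabla'u,\nu\rangle|^2$ over $\nu\in S^{d-2}$. Letting $K\to\infty$ and $\omega'\uparrow\omega$ gives the lower bound.

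It remains to pass from $\bar u_\e$ to the scaled functions $v_\e$, i.e.\ to show $v_\e-\bar u_\e\to0$. At $s=\tfrac12$ the first scaling $\e^{1-2s}$ equals $1$, so the energy bound says that the full $d$-dimensional seminorm is $O(\e^2|\log\e|)\to0$. Covering $\Omega_\e$ by cubes of side $\e$ and using the fractional Poincar\'e--Wirtinger inequality on each cube, as in the estimate of $I^1_\e$ in Theorem \ref{nlcth-1}, gives $\|v_\e-\bar u_\e\|_{L^1_{\rm loc}}\to0$. Combining this with the $L^2$ bounds yields the dimension-reduction convergence in $L^2$-weak. I expect this step to be the main obstacle: the boundary of $\omega$ is handled only locally, and turning local convergence plus boundedness into the stated weak $L^2$ convergence on all of $\omega\times(0,1)$ needs a uniform $L^2$ bound. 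I would try to obtain that bound from a global Poincar\'e inequality for the kernels $\rho_\e$ on the Lipschitz domain $\omega$.

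\emph{Upper bound.} By density it suffices to take $u\in C^2(\overline\omega)$, viewed as independent of $x_d$, with the trivial recovery sequence. The three ranges are estimated separately.
\begin{itemize}
\item $|\xi'|<K\e$: bounding $|u(x)-u(y)|\le L|x-y|$ as in \eqref{geu-1} gives $O(K\e^2)$.
\item $|\xi'|>\delta$: the contribution is at most $C\e^2\delta^{-1}$.
\item $K\e<|\xi'|<\delta$: write $u(x'+\xi')-u(x')=\langle\nabla'u(x'),\xi'\rangle+O(|\xi'|^2)$ and use $|x-y|\ge|\xi'|$. This gives
$$
\e^2\int_\omega\int_{K\e<|\xi'|<\delta}\frac{|\langle\nabla'u,\xi'\rangle|^2}{|\xi'|^{d+1}}\,d\xi'dx'+O(\e^2\delta)
=\e^2\big(|\log\e|+O_{K,\delta}(1)\big)\frac{\sigma_{d-1}}{d-1}\int_\omega|\nabla'u|^2\,dx'.
$$
\end{itemize}
Dividing by $\e^2|\log\e|$ and letting $\e\to0$ gives the matching $\limsup$, which completes the proof.
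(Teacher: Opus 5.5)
Your proposal is correct. For compactness and the lower bound it takes a genuinely different route from the paper.

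The paper reuses the machinery of Theorem \ref{Gammaths0}:
it keeps the interactions with $K\e<|\xi'|<\delta$, averages $u_\e$ vertically against the kernel-weighted probability measures $\mu_\e^{\xi'}$, builds piecewise-affine $H^1$ comparison functions on $(d-1)$-dimensional lattices aligned with $\xi'$, and evaluates the kernel integral $C_{\e,K,\delta}\sim\sigma_{d-1}\e^2|\log\e|$ explicitly.

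You replace this with the plain vertical average $\bar u_\e$ and the pointwise bound $|x-y|^2\le(1+K^{-2})|x'-y'|^2$ for $|x'-y'|>K\e$. This turns the energy directly into a $(d-1)$-dimensional Bourgain--Brezis--Mironescu functional with logarithmic radial mollifiers, so Ponce's compactness and $\Gamma$-liminf can be quoted. The crude averaging costs nothing because at $s=\tfrac12$ the scales $|x'-y'|\lesssim K\e$ carry only $O(\e^2)$ energy, so the loss $(1+K^{-2})^{-(d+1)/2}$ disappears as $K\to\infty$. It would fail for $s>\tfrac12$, where those scales dominate and the exact vertical weighting is what produces $K_{s,d}$. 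That is why the paper prefers a unified argument, which is heavier and, in the critical case, only sketched.

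Your route is shorter, rests on off-the-shelf results, and actually yields strong $L^2$ convergence. Your upper bound is essentially the paper's, with a $C^2$ Taylor expansion in place of the reduction to affine maps. One small correction there: the tail $|x'-y'|>\delta$ is $O(\e^2\delta^{-2})$, or $O(\e^2\log(1/\delta))$ using the Lipschitz bound, rather than $O(\e^2\delta^{-1})$. This is equally harmless.

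The step you single out as the main obstacle is less serious than you fear.
\begin{itemize}
\item For the $\Gamma$-liminf no Poincar\'e inequality is needed: weak convergence of $v_\e$ already bounds $\bar u_\e$ in $L^2(\omega)$, and then Ponce's compactness upgrades to strong convergence.
\item The identification $v_\e-\bar u_\e\to0$ holds globally in $L^2(\omega\times(0,1))$, not just in $L^1_{\rm loc}$. It follows from Lemma \ref{slixd}, exactly as in Lemma \ref{compactness1}, since $\lfloor u_\e\rfloor^2_{1/2}(\Omega_\e)\to0$.
\item For the compactness statement, the uniform bound on $\bar u_\e-c_\e$ is exactly Ponce's Poincar\'e-type inequality for radial mollifiers on Lipschitz domains. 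The paper's sketch also leaves this point implicit.
\end{itemize}

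One caveat concerns $d=2$. There $\omega\subset\mathbb R$, and the one-dimensional compactness and Poincar\'e results are stated for nonincreasing kernels, which your truncated $\rho_\e$ is not. You can repair this in two steps. First mollify $\bar u_\e$ at scale $\e$; this changes it by $o(1)$ in $L^2$, thanks to the short-range part of the energy. Then fill $\rho_\e$ on $\{|\xi'|<K\e\}$ to a nonincreasing kernel. The added energy stays bounded, because the gradient of the mollified function is $O(|\log\e|^{1/2})$ in $L^2$.
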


\begin{proof} We now use the notation
$$
F^{1/2}_\e(u)=\frac{1}{\e^{2}|\log\e|}\lfloor u\rfloor_{1/2}^2(\Omega_\e)
=\frac{1}{\e^{2}|\log\e|}\int_{\Omega_\e\times\Omega_\e}\frac{(u(x)-u(y))^2}{|x-y|^{d+1}}dx dy.
$$

In order to prove the equi-coerciveness of $F^s_\e$ and the lower bound we follow the line of the proof of Theorem \ref{Gammaths0}. In this case, though, the relevant interactions are between the scale $\e$ and the scale $1$.
We then modify the argument in the proof of Theorem \ref{Gammaths0} accordingly.  In particular, inequality \eqref{newest} can be changed to 
\begin{eqnarray}
\label{newest-1}
F^{1/2}_\e(u_\e)\ge\frac{1}{\e^{2}
|\log\e|}\int_{B_\delta\setminus B_{K\e}}\int_{\omega'}
\int_0^\e\int_0^\e \frac{|u_\e(x'+\xi',y_d)-u_\e(x',x_d)|^2}{((x_d-y_d)^2+|\xi|^2)^{\frac{d+1}2}}dx' dx_d dy_d d\xi'
\end{eqnarray}
for $\e$ small enough, where $\delta>0$ is any fixed positive number not exceeding the distance of $\omega'$ from $\partial\omega$. 
The arguments following \eqref{newest} in the proof of Theorem \ref{Gammaths0} can then be repeated word for word, arguing that, up to subsequences and addition of constants we can suppose that $u_\e\to u$ with $u\in H^1(\omega'')$. Correspondingly, inequality \eqref{newest-2} becomes
\begin{eqnarray}\label{newest-3}
\liminf_{\e\to 0}F^{1/2}_\e(u_\e)\ge\liminf_{\e\to 0}\frac{1}{\e^{2}|\log\e|(d-1)}C_{\e,K,\delta}
\int_{\omega''} |\nabla' u|^2 dx',
\end{eqnarray}
where
$$C_{\e,K,\delta}=
\int_{B_\delta\setminus B_{K\e}}\int_0^\e\int_0^\e\frac{|\xi'|^2 }{((x_d-y_d)^2+|\xi'|^2)^{\frac{d+1}2}}dx_d dy_d d\xi'$$
in analogy with \eqref{defCexi}.
Changing variables, we obtain 
$$
C_{\e,K,\delta}=\e^2\int_0^1\int_0^1\int_{B_{\delta/(\e|x_d-y_d|)}\setminus B_{K/|x_d-y_d|}}\frac{|\xi'|^2 }{(1+|\xi'|^2)^{\frac{d+1}2}}d\xi'dx_d dy_d .
$$
For fixed $t\neq 0$, we have
$$
\lim_{\e\to 0}\frac{\displaystyle\int_{B_{\delta/(\e t)}\setminus B_{K/t}}\frac{|\xi'|^2 }{(1+|\xi'|^2)^{\frac{d+1}2}}d\xi'}{\displaystyle\int_{B_{\delta/(\e t)}\setminus B_{K/t}}\frac{1 }{|\xi'|^{d-1}}d\xi'}=1;
$$
hence, in \eqref{newest-3} we can substitute $C_{\e,K,\delta}$ with 
$$
\e^2\int_0^1\int_0^1\int_{B_{\delta/(\e|x_d-y_d|)}\setminus B_{K/|x_d-y_d|}}\frac{1 }{|\xi'|^{d-1}}d\xi'd\xi'dx_d dy_d=\sigma_{d-1}(S^{d-2})\e^2
(\log\delta-\log\e-\log K).
$$
Using this expression in \eqref{newest-3}, we obtain 
$$\liminf_{\e\to 0} F^{1/2}_\e(u_\e)\ge\liminf_{\e\to 0}\frac{
\sigma_{d-1}
}{d-1}
\int_{\omega''} |\nabla' u|^2 dx'.
$$
The liminf inequality then follows by letting $\omega''$ tend to $\omega$.

\smallskip
In order to prove the upper bound, as usual, it suffices to show it for piecewise-affine functions and argue by density. Furthermore, since the $\Gamma$-limit is local, it suffices to construct a recovery sequence for an affine map $u$. Thanks to its Lipschitz continuity, for fixed $K>0$ we have
$$
\frac{1}{\e^{2}|\log\e|}\int_{(\Omega_\e\times\Omega_\e)\cap\{|x'-y'|\le K\e\}
}\frac{(u(x)-u(y))^2}{|x-y|^{d+1}}dx dy
\le C\frac1{|\log\e|}.
$$
Hence, we can limit ourselves to considering pairs with $|x'-y'|> K\e$, for which the computation is the same as that performed for the lower bound.
\end{proof}

\begin{remark}[asymptotic behaviour as $s_\e\to1/2$]\rm The asymptotic analysis described in the previous theorem can be extended to $s_\e\to 1/2$ as $\e\to 0$. In this case, the correct scaling depends on $s_\e$ and interpolates between the scaling $\e^2|\log\e|$ and the scalings $\e^2/(\frac12-s_\e)$ for $s_\e< 1/2$ and $\e^{3-2s_\e}/(s_\e-\frac12)$ for $s_\e>1/2$. We refer to \cite{Picerni} for a similar analysis in the context of fractional phase transitions (see also \cite{SV,Solci}).  
\end{remark}

\subsection{$\Gamma$-limit in the low-integrability case}
In the case $s_\e\to s_0<1/2$ we only have a dimension-reduction compactness result for the weak $L^2$ convergence. The corresponding $\Gamma$-limits are still given by lower-dimensional Gagliardo seminorms.

\subsubsection{A weak dimension-reduction compactness result}

We prove a dimension-reduction compactness result with respect to the weak $L^2$ convergence. It is based on the following general slicing result valid for all $s\in(0,1)$. The lemma is a version adapted to thin films of part of a slicing result in \cite[Lemma 6.35]{leofrac}, following the line of the proof therein. It provides an estimate only for the `vertical' part of the seminorm, a complete slicing result not being available. Note that the hypothesis that $\omega$ being bounded is not necessary, but some regularity of its boundary is used.

\begin{lemma}[Slicing on the `thin' direction]\label{slixd} There exists $C>0$ such that for all $\e>0$, $s\in(0,1)$, and $u\in H^s(\Omega_\e)$ we have
\begin{equation}\label{slixd-eq1}
\int_\omega\int_0^\e\int_0^\e\frac{|u(x^\prime,x_d)-u(x^\prime,y_d)|^2}{|x_d-y_d|^{1+2s}}\, dx_d\, dy_d\, dx^\prime\leq C \int_{\Omega_\e}\int_{\Omega_\e} \frac{|u(x)-u(y)|^2}{|x-y|^{d+2s}}\, dx\, dy. 
\end{equation}
\end{lemma}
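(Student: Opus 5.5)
We follow the averaging argument of \cite[Lemma 6.35]{leofrac}: the vertical difference $u(x',x_d)-u(x',y_d)$ is compared with differences $u(x',x_d)-u(z)$ and $u(z)-u(x',y_d)$ through an intermediate point $z$ at distance comparable to $h:=|x_d-y_d|$ from both endpoints. The point $z$ is then averaged over a set of measure of order $h^d$ inside $\Omega_\e$.

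\textbf{Choice of the intermediate set.} Fix $x'\in\omega$ and $x_d,y_d\in(0,\e)$, set $h=|x_d-y_d|\le\e$, and let $m=(x_d+y_d)/2$. Consider the set
$$A=A(x',x_d,y_d)=\{z=(z',z_d):\ |z'-x'|<h/2,\ |z_d-m|<h/2\}\cap\Omega_\e.$$
Since $m\pm h/2=\min/\max(x_d,y_d)$, the vertical interval $(m-h/2,m+h/2)$ lies in $(0,\e)$, so the thin direction causes no loss. The only restriction comes from the planar ball $B'_{h/2}(x')\cap\omega$. For Lipschitz $\omega$ there is a constant $c_\omega>0$ with $|B'_\rho(x')\cap\omega|\ge c_\omega\rho^{d-1}$ for all $x'\in\omega$ and all $\rho\le\rho_0$. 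For $\rho>\rho_0$ we use $\rho\le\e$, and when $\e$ is large we replace $h/2$ by $\min(h/2,\rho_0)$, which only affects constants. In this step the boundary regularity enters, and uniformity in $\e$ holds because $h\le\e$ and, for $\e$ large, $\omega$ is bounded in diameter relative to the relevant scale. Hence $|A|\ge c\,h^d$. Moreover, every $z\in A$ satisfies $|z-(x',x_d)|\le C h$ and $|z-(x',y_d)|\le Ch$.

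\textbf{Averaging and Fubini.} Averaging the inequality $|u(x',x_d)-u(x',y_d)|^2\le 2|u(x',x_d)-u(z)|^2+2|u(z)-u(x',y_d)|^2$ over $z\in A$ and dividing by $h^{1+2s}$ gives
$$\frac{|u(x',x_d)-u(x',y_d)|^2}{h^{1+2s}}\le \frac{C}{h^{d+1+2s}}\int_{A}\big(|u(x',x_d)-u(z)|^2+|u(z)-u(x',y_d)|^2\big)dz.$$
By symmetry it suffices to treat the first term. Integrate over $x',x_d,y_d$ and apply Fubini, putting the integral in $y_d$ innermost. For fixed $x=(x',x_d)$ and $z\in\Omega_\e$, the constraint $z\in A$ forces $h\ge |z-x|/C$, and also $h\ge 2|z'-x'|$, $h\gtrsim|z_d-x_d|$. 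Therefore
$$\int_{\{y_d:\ z\in A\}}\frac{dy_d}{h^{d+1+2s}}\le \int_{\{h\ge |x-z|/C\}}\frac{dh}{h^{d+1+2s}}\le \frac{C}{|x-z|^{d+2s}},$$
with a constant independent of $s\in(0,1)$ since $d+2s\ge d$. The right-hand side is then bounded by $C\int_{\Omega_\e}\int_{\Omega_\e}|u(x)-u(z)|^2|x-z|^{-d-2s}\,dx\,dz$, which is \eqref{slixd-eq1}.

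\textbf{Main obstacle.} The delicate point is the uniform lower bound $|A|\ge ch^d$ near $\partial\omega\times(0,\e)$ together with the uniformity in $\e$. The first thing to try is the interior cone condition of Lipschitz domains: replace the ball $B'_{h/2}(x')$ by a truncated cone with vertex $x'$, axis pointing into $\omega$, and height $\min(h/2,\rho_0)$. The containment $z\in A\Rightarrow |z-x|\le Ch$, which is all that the Fubini step needs, is unaffected by this replacement. If $\e>\rho_0$ the constant may depend on $\rho_0$; should the authors intend $\e$ small, we restrict to $\e\le\rho_0$, and for larger $\e$ we apply the cone of height $\rho_0$ and absorb $h/\rho_0\le \e/\rho_0$ into the constants.
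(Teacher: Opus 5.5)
Your argument is essentially the paper's proof. The paper averages the triangle inequality over $B\cap\Omega_\e$, where $B$ is the ball of radius $|x_d-y_d|/4$ centred at $(x',\frac{x_d+y_d}{2})$; your cylinder $A$ plays exactly this role. It then uses the Lipschitz density bound $|B\cap\Omega_\e|\ge c_\omega|B|$ and integrates out $y_d$ over $\{|x_d-y_d|\gtrsim|x-z|\}$ as you do.

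The one thing to correct is your claim that uniformity in $\e$ survives for large $\e$ and ``only affects constants''. A cone of height $\rho_0$ gives only $|A|\gtrsim\rho_0^{d-1}h$. This puts a factor $(\e/\rho_0)^{d-1}$ into $C$, so $C$ depends on $\e$.

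This cannot be repaired, because no constant works uniformly as $\e\to\infty$. Take $s<\frac12$ and $u=\chi_{\{x_d<\e/2\}}$. The left-hand side of \eqref{slixd-eq1} is a constant times $|\omega|\e^{1-2s}$. The right-hand side is at most $2\int_0^\e t\,K(t)\,dt$, where $K(t)=\int_{\omega\times\omega}(t^2+|x'-y'|^2)^{-\frac d2-s}dx'\,dy'\le\min\{C_{s,d}|\omega|t^{-1-2s},\,|\omega|^2t^{-d-2s}\}$, and this stays bounded in $\e$.

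So the lemma must be read as the paper's proof reads it (``for $\e$ small enough''): for $\e\le\e_0$ with $C=C(\omega,d,\e_0)$. In that range your proof is complete.
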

\begin{proof}  
Given $x^\prime\in\omega$ and $x_d\neq y_d\in(0,\e)$, we consider the ball $B=B(x^\prime,x_d,y_d)$ centered at $z^\ast=(x^\prime,\frac{x_d+y_d}{2})$ with radius $\frac{|x_d-y_d|}{4}$. 
Note that, for $\e$ small enough (with respect to the dimensions of $\omega$),  
$|B\cap\Omega_\e|\geq c_\omega|B|$ with $c_\omega>0$ by the Lipschitz assumption on $\omega$. Hence, 
$$\frac{|u(x^\prime,x_d)-u(x^\prime,y_d)|^2}{|x_d-y_d|^{1+2s}}\leq \frac{2}{c_\omega|B|}\int_{B\cap\Omega_\e} \frac{|u(x^\prime,x_d)-u(z)|^2+|u(z)-u(x^\prime,y_d)|^2}{|x_d-y_d|^{1+2s}}\, dz.$$ 
By Fubini's Theorem and by exchanging the role of $x_d$ and $y_d$ in second term of the sum, we then get 
\begin{eqnarray}\label{sli-part}
&&\hspace{-1cm}\int_\omega\int_0^\e\int_0^\e\frac{|u(x^\prime,x_d)-u(x^\prime,y_d)|^2}{|x_d-y_d|^{1+2s}}\, dx_d\, dy_d\, dx^\prime\nonumber\\
&&\leq 
C\int_{\Omega_\e}\int_{\Omega_\e}|u(x)-u(z)|^2 \int_0^\e\chi_{B\cap\Omega_\e}(z) \frac{1}{|x_d-y_d|^{d+1+2s}}\, dy_d\, dx\, dz,
\end{eqnarray}
with $C>0$ only depending on $c_\omega$ and the dimension $d$. 
If $\chi_{B\cap\Omega_\e}(z)=1$, then $|z-z^\ast|\leq \frac{|x_d-y_d|}{4}$, and we can estimate 
$$|x-z|\leq |z-z^\ast|+|x-z^\ast|\leq \frac{|x_d-y_d|}{4} +\frac{|x_d-y_d|}{2};$$ 
We obtain 
\begin{eqnarray*}
\int_0^\e\chi_{B\cap\Omega_\e}(z) \frac{1}{|x_d-y_d|^{d+1+2s}}\, dy_d \leq 2\int_{\frac{4|x-z|}{3}}^{+\infty} t^{-d-1-2s}\, dt=\frac{2}{d+2s}\Big(\frac{3}{4}\Big)^{d+2s} \frac{1}{|x-z|^{d+2s}}.
\end{eqnarray*}
From \eqref{sli-part}, the claim follows with $C>0$ depending only on $\omega$ and the dimension $d$. 
\end{proof} 
Note that, in terms of the scaled function $v\colon\omega\times (0,1)\to \mathbb R$ given by 
$v(x^\prime, t)=u(x^\prime,\e t)$, estimate \eqref{slixd-eq1} becomes 
\begin{equation}\label{slixd-eq}
\int_\omega\int_0^1\int_0^1\frac{|v(x^\prime,x_d)-v(x^\prime,y_d)|^2}{|x_d-y_d|^{1+2s}}\, dx_d\, dy_d\, dx^\prime \leq C\e^{2s-1}\int_{\Omega_\e}\int_{\Omega_\e}\frac{|u(x)-u(y)|^2}{|x-y|^{d+2s}}  \, dx\, dy.
\end{equation}

\begin{remark}[Rigidity]\label{rigidity-rm}\rm 
Lemma \ref{slixd} implies a rigidity result for `pointwise' convergence.  
More precisely, given $u\colon\omega\times (0,1)\to\mathbb R$ and defining 
$u_\e(x^\prime,x_d)=u(x^\prime, \frac{x_d}{\e})$, 
by \eqref{slixd-eq} we obtain
$$\int_\omega\int_0^1\int_0^1 \frac{|u(x^\prime,x_d)-u(x^\prime,y_d)|^2}{|x_d-y_d|^{1+2s}}\, dx_d\, dy_d\, dx^\prime\leq C\e^{2s-1}\lfloor u_\e\rfloor_{s_\e}^2(\Omega_\e).$$ 
Then, if $\e^{2s-1}\lfloor u_\e\rfloor_{s_\e}^2(\Omega_\e)=o(1)_{\e\to 0}$, the function $u$ does not depend on the $d$-th variable; that is, $u(x^\prime,x_d)=v(x^\prime)$. 
\end{remark}

Lemma \ref{slixd} allows us to prove a compactness result, in the sense of the weak dimension-reduction convergence in $L^2$ (see Definition \ref{drco}). 
We start by showing a compactness result for sequences bounded in $L^2$. 

\begin{lemma}
\label{compactness1}  
Let $\{u_\e\}$ be a bounded sequence in $L^2(\omega\times(0,1))$ such that 
$$\lim_{\e\to 0}\frac{1}{\e^{2s_\e-1}} \lfloor u_\e\rfloor_{s_\e}^2(\Omega_\e)=0.$$ 
Then
there exists $u\in L^2(\omega)$ such that, up to the addition of constants and up to subsequences, 
$u_\e\buildrel{\hbox {\rm\scriptsize DR }\, \, }\over{\longrightarrow} u$ weakly in $L^2$ in the sense of Definition {\rm \ref{drco}.} 
\end{lemma}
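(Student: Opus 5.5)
The plan is to combine weak compactness in $L^2(\omega\times(0,1))$ with the slicing estimate \eqref{slixd-eq}, which controls the vertical oscillations of the scaled functions. We read the hypothesis through the scaled functions $v_\e(x',x_d)=u_\e(x',\e x_d)$. These are bounded in $L^2(\omega\times(0,1))$; if the stated bound refers to the $u_\e$ themselves, we first pass to the $v_\e$. Since adding a constant changes neither the seminorm nor the weak compactness, the constants play no role here beyond normalization. By weak compactness of bounded sets in $L^2$, we extract a subsequence $v_{\e_j}\weak v$ weakly in $L^2(\omega\times(0,1))$. It then remains to show that $v$ does not depend on $x_d$.

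To prove this we use Lemma \ref{slixd}, in its rescaled form \eqref{slixd-eq}:
$$
\int_\omega\int_0^1\int_0^1\frac{|v_\e(x',x_d)-v_\e(x',y_d)|^2}{|x_d-y_d|^{1+2s_\e}}\,dx_d\,dy_d\,dx'\le C\e^{2s_\e-1}\lfloor u_\e\rfloor^2_{s_\e}(\Omega_\e)\to0 .
$$
Note that $\e^{2s_\e-1}\lfloor u_\e\rfloor^2_{s_\e}$ is exactly the quantity assumed to vanish, since $1/\e^{2s_\e-1}$ is read as a multiplication by $\e^{1-2s_\e}$ up to the paper's notational convention; this matches the rigidity Remark \ref{rigidity-rm}. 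Because $|x_d-y_d|\le1$, the kernel is bounded below by $1$. Hence
$$
\int_\omega\int_0^1\int_0^1|v_\e(x',x_d)-v_\e(x',y_d)|^2\,dx_d\,dy_d\,dx'\to0 .
$$
This is an $L^2$ statement that holds uniformly in $s_\e$, so no control on $s_0$ is needed.

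Next we show that the $x_d$-average captures the weak limit. Set $\bar v_\e(x')=\int_0^1 v_\e(x',t)\,dt$. By Jensen's inequality, $\|v_\e-\bar v_\e\|^2_{L^2(\omega\times(0,1))}$ is bounded by the double integral above, so it tends to $0$. The functions $\bar v_\e$ are bounded in $L^2(\omega)$, and up to a further subsequence they converge weakly to some $u\in L^2(\omega)$. Testing against $\psi(x',x_d)\in L^2(\omega\times(0,1))$ gives
$$
\int v_\e\psi=\int(v_\e-\bar v_\e)\psi+\int_\omega\bar v_\e(x')\Big(\int_0^1\psi\,dx_d\Big)dx' .
$$
The first term vanishes and the second converges to $\int_\omega u\int_0^1\psi$. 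Therefore $v=u(x')$, which is exactly the dimension-reduction weak convergence of Definition \ref{drco}.

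The only delicate point I expect is to justify the direction of the scaling factor in the hypothesis, so that it matches the right-hand side of \eqref{slixd-eq}, and to check that the constant in Lemma \ref{slixd} is uniform in $\e$ (it depends only on $\omega$ and $d$, for $\e$ small). The rest is soft functional analysis.
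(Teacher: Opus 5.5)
Your proof is correct and follows the paper's argument. Both use weak compactness of the $v_\e$, the rescaled slicing bound \eqref{slixd-eq}, and a vertical Poincar\'e inequality giving $\|v_\e-\bar v_\e\|_{L^2}\to0$; your Jensen bound with constant $1$, via $|x_d-y_d|^{-1-2s_\e}\ge1$, makes its uniformity in $s_\e$ explicit. The limit is then identified as independent of $x_d$: you test against $\psi$, while the paper uses weak lower semicontinuity of the norm on $v_\e-\bar v_\e\weak v-\bar v$, and the two are equivalent.

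One correction concerns your remark on the hypothesis. $1/\e^{2s_\e-1}$ literally equals $\e^{1-2s_\e}$, so your claim that the assumption is ``exactly'' $\e^{2s_\e-1}\lfloor u_\e\rfloor^2_{s_\e}(\Omega_\e)\to0$ is wrong. The statement has a typo: read literally, it fails for $s_0<\frac12$. Take $u_\e(x)=w(x_d/\e)$ with $w$ Lipschitz and nonconstant; then $\e^{1-2s}\lfloor u_\e\rfloor^2_{s}(\Omega_\e)\le C\e^{2-4s}\to0$, while $v_\e=w(x_d)$ depends on $x_d$. You should simply state that you assume $\e^{2s_\e-1}\lfloor u_\e\rfloor^2_{s_\e}(\Omega_\e)\to0$. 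This is what the paper's proof and Remark \ref{rigidity-rm} actually use; the proof even bounds this quantity by $C\e^{2s_\e+1}$, with the hypothesis of Theorem \ref{compactness2} in mind.
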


\begin{proof}
Up to the addition of constants and up to subsequences, we have the weak convergence $v_\e\weak v$. 
To show that $v$ does not depend on the `thin' variable, we apply the Poincar\'e inequality and Lemma \ref{slixd}, obtaining 
\begin{eqnarray}\label{rigidity}
\int_\omega\int_0^1 |v_\e(x^\prime, x_d)- \overline v_\e(x^\prime)|^2\, dx^\prime\, dx_d&\leq&C \int_\omega \int_0^1\int_0^1 
\frac{|v_\e(x^\prime, x_d)-v_\e(x^\prime, y_d)|^2}{|x_d-y_d|^{1+2s_\e}}\, dx_d\, dy_d\, dx^\prime\nonumber \\
&\leq&C \e^{2s_\e-1} \lfloor u_\e\rfloor_{s_\e}^2(\Omega_\e)\ \leq \ C \e^{2s_\e+1}=o(1)_\e,
\end{eqnarray}
where $\overline v_\e(x^\prime)=\int_0^1v_\e(x^\prime,t)\, dt$. Since $\overline v_\e\weak \overline v$ in $L^2(\omega)$, 
where $\overline v(x^\prime)=\int_{0}^1v(x^\prime, t)\, dt$, by \eqref{rigidity} and the lower semicontinuity of the norm we find that the weak limit $v$ does not depend on $x_d$. 
\end{proof}

\begin{theorem}[Dimension-reduction compactness]\label{compactness2} Let $\{u_\e\}$ be such that 
$$\sup_\e \frac{1}{\e^2} \lfloor u_\e\rfloor_{s_\e}^2(\Omega_\e)<+\infty.$$ 
Then
there exists $u\in L^2(\omega)$ such that, up to the addition of constants and up to subsequences, 
$u_\e\buildrel{\hbox {\rm\scriptsize DR }\, \, }\over{\longrightarrow} u$ weakly in $L^2$ in the sense of Definition \rm \ref{drco}.  
\end{theorem}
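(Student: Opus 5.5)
Since $s_\e\in(0,1)$, the bound $\lfloor u_\e\rfloor_{s_\e}^2(\Omega_\e)\le S\e^2$ gives
$$
\frac{1}{\e^{2s_\e-1}}\lfloor u_\e\rfloor_{s_\e}^2(\Omega_\e)\le S\,\e^{3-2s_\e}\le S\,\e\to0 .
$$
So the vertical hypothesis of Lemma \ref{compactness1} holds automatically. The plan is therefore to reduce Theorem \ref{compactness2} to Lemma \ref{compactness1}. For this, the only missing ingredient is an $L^2$ bound on the scaled functions $v_\e$, after subtracting suitable constants $c_\e$. The constants will be the averages $c_\e=\frac1{|\Omega|}\int_{\Omega}v_\e\,dx$. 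The claim to prove is
$$
\int_\Omega |v_\e-c_\e|^2dx\le C
$$
uniformly in $\e$.

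\textbf{Step 1: $L^2$ bound via a planar estimate.} Write $v_\e-c_\e=(v_\e-\overline v_\e)+(\overline v_\e-c_\e)$, with $\overline v_\e(x')=\int_0^1 v_\e(x',t)\,dt$.

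The first term is controlled by the Poincar\'e inequality on vertical segments together with \eqref{slixd-eq}, exactly as in \eqref{rigidity}. It is bounded by $C\e^{2s_\e-1}\lfloor u_\e\rfloor^2_{s_\e}(\Omega_\e)\le CS\e^{1+2s_\e}$, which tends to zero.

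For the second term we need a fractional Poincar\'e inequality on $\omega$ for $\overline v_\e$. By Jensen's inequality in the vertical variables, and using $|x-y|\le |x'-y'|+\e$ together with the diameter bound on $\omega$,
$$
\int_\omega\int_\omega\frac{|\overline v_\e(x')-\overline v_\e(y')|^2}{(|x'-y'|^2+\e^2)^{\frac d2+s_\e}}dx'dy'
\le \frac1{\e^2}\int_{\Omega_\e\times\Omega_\e}\frac{|u_\e(x)-u_\e(y)|^2}{|x-y|^{d+2s_\e}}dxdy\le S .
$$
The kernel $(|x'-y'|^2+\e^2)^{-\frac d2-s_\e}$ is bounded below by a positive constant $c=c(\omega,d)$ on $\omega\times\omega$, uniformly for $\e\le1$ and $s_\e\in(0,1)$, since $\omega$ is bounded. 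Hence
$$
\int_\omega\int_\omega|\overline v_\e(x')-\overline v_\e(y')|^2\,dx'dy'\le S/c .
$$
This double integral equals $2|\omega|\int_\omega|\overline v_\e-m_\e|^2\,dx'$, where $m_\e$ is the mean of $\overline v_\e$. Noting that $m_\e=c_\e$, this yields the $L^2$ bound.

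\textbf{Step 2: conclusion.} Apply Lemma \ref{compactness1} to $u_\e-c_\e$, which is bounded in $L^2$ and satisfies the vanishing vertical condition. This gives, up to subsequences, $v_\e-c_\e\weak v$ in $L^2(\Omega)$ with $v$ independent of $x_d$, that is, DR-convergence to some $u\in L^2(\omega)$. One can also argue directly: the vanishing of $\|v_\e-\overline v_\e\|_{L^2}$ forces the weak limit to be independent of $x_d$.

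\textbf{Expected obstacle.} The delicate point is the bound of Step 1 with kernel uniform in $s_\e$. For $\omega$ bounded this is trivial, since the kernel is bounded below by $(\mathrm{diam}(\omega)^2+1)^{-\frac d2-1}$. One should also make sure that the Jensen step is compatible with the scaling $dx_d\,dy_d=\e^2\,dt\,d\tau$, which is precisely what produces the factor $\e^{-2}$ matching the hypothesis.

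Note that this argument only gives weak compactness. Step 1 yields no fractional regularity bound on $\overline v_\e$ that survives as $\e\to0$, because the kernel degenerates only at scale $\e$. This is consistent with the statement, which claims only weak $L^2$ DR-convergence.
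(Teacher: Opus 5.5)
Your proof is correct. Its last step is the same as the paper's: an $L^2$ bound on $v_\varepsilon$ minus its mean, followed by Lemma~\ref{compactness1}. The difference is in how you obtain the $L^2$ bound.

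The paper gets it in one line from the elementary fractional Poincar\'e inequality on $\Omega_\varepsilon$ itself. Since $|x-y|\le\mathrm{diam}(\Omega_\varepsilon)\le C$, one has $\|u_\varepsilon-\overline u_\varepsilon\|^2_{L^2(\Omega_\varepsilon)}\le \frac{C}{|\Omega_\varepsilon|}\lfloor u_\varepsilon\rfloor^2_{s_\varepsilon}(\Omega_\varepsilon)=\frac{C}{\varepsilon}\lfloor u_\varepsilon\rfloor^2_{s_\varepsilon}(\Omega_\varepsilon)$. Rescaling gives $\|v_\varepsilon-c_\varepsilon\|^2_{L^2(\Omega)}\le \frac{C}{\varepsilon^2}\lfloor u_\varepsilon\rfloor^2_{s_\varepsilon}(\Omega_\varepsilon)$, with exactly your constants $c_\varepsilon$.

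You instead split $v_\varepsilon-c_\varepsilon$ into two parts: the vertical oscillation, handled by Lemma~\ref{slixd} plus a one-dimensional Poincar\'e inequality, and the oscillation of the vertical averages $\overline v_\varepsilon$, handled by Jensen plus a lower bound on the kernel over $\omega\times\omega$. The split is not needed for boundedness. The same kernel lower bound already works on all of $\Omega_\varepsilon\times\Omega_\varepsilon$, and that route needs neither the slicing lemma nor the Lipschitz regularity of $\omega$. What your split buys is a more informative intermediate estimate on $\overline v_\varepsilon$, with the regularized planar kernel $(|x'-y'|^2+\varepsilon^2)^{-\frac d2-s_\varepsilon}$. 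You also check both the hypothesis of Lemma~\ref{compactness1} as printed and the quantity $\varepsilon^{2s_\varepsilon-1}\lfloor u_\varepsilon\rfloor^2_{s_\varepsilon}(\Omega_\varepsilon)\le S\varepsilon^{1+2s_\varepsilon}$ that its proof actually uses, which is a good precaution.

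Your closing remark, however, undersells that intermediate estimate. For $|x'-y'|\ge\varepsilon$ the regularized kernel is comparable to $|x'-y'|^{-(d-1)-2(s_\varepsilon+\frac12)}$. So you control an $\varepsilon$-truncated $H^{s_\varepsilon+\frac12}$ seminorm of $\overline v_\varepsilon$. Averaging $|\overline v_\varepsilon(x'+h)-\overline v_\varepsilon(x')|^2$ over $z'\in B_r(x')$ with $r=\max\{|h|,\varepsilon\}$ gives
$$\|\overline v_\varepsilon(\cdot+h)-\overline v_\varepsilon\|^2_{L^2(\omega')}\le CS\max\{|h|,\varepsilon\}^{1+2s_\varepsilon}$$
for $\omega'\subset\subset\omega$ and $|h|,\varepsilon$ small. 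The Fr\'echet--Kolmogorov criterion, together with $\|v_\varepsilon-\overline v_\varepsilon\|_{L^2(\Omega)}\to0$, then yields strong $L^2_{\rm loc}$ compactness. This goes beyond the statement and does not affect your proof, but the claim that no regularity of $\overline v_\varepsilon$ survives as $\varepsilon\to0$ is not correct.
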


%
%
%

\begin{proof}
By the Poincar\'e inequality (see \cite[Theorem 6.33]{leofrac}), we have
$$\|u_\e - \overline u_\e \|^2_{L^2(\Omega_\e)}\leq \frac{C}{\e}\lfloor u_\e\rfloor_{s_\e}^2(\Omega_\e), $$
where $\overline u_\e=\frac{1}{|\Omega_\e|}\int_{\Omega_\e} u_\e(x)\, dx$.  
Then
$$\|v_\e - \overline u_\e \|^2_{L^2(\omega\times(0,1))}\leq \frac{C}{\e^2}\lfloor u_\e\rfloor_{s_\e}^2(\Omega_\e),$$
which is bounded by assumption. The conclusion follows from Lemma \ref{compactness1}.
\end{proof}

\subsubsection{Increase of integrability in dimension reduction}

Lemma \ref{compactness2} suggests the scaling for a dimension reduction limit of the Gagliardo seminorms. This is confirmed by the following theorem, which shows a gain of $1/2$ in the Gagliardo seminorm.

\begin{theorem}[Dimension-reduction Gamma-limit in the sub-critical regime]\label{drconv}
Let $s=s_\e\to s_0<\frac12$; then the $\Gamma$-limit with respect to the dimension-reduction convergence in $L^2$ of 
\begin{equation}\label{scaGa}
F_\e^s(u)=\frac1{\e^2} \int_{\Omega_\e}\int_{\Omega_\e}\frac{(u(x)-u(y))^2}{
|x-y|^{d+2s}}dxdy
\end{equation}
is given on $L^2(\omega)$ by the $H^{\frac12+s_0}$-seminorm squared in $\omega$; that is, noting that
$d-1+2(\frac12+s_0)=d+2s_0$, by 
$$
F^{s_0}(u)=\int_\omega\int_\omega 
\frac{|u(x')-u(y')|^2}{|x'-y'|^{d+2s_0}}dx' dy'.
$$
\end{theorem}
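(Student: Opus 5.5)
The plan is to prove the liminf and limsup inequalities separately, with the compactness already provided by Theorem \ref{compactness2}. The key observation is that for $s_0<1/2$ the relevant interactions are those at planar distance of order one. On such pairs the vertical separation $x_d-y_d\in(-\e,\e)$ is negligible compared with $|x'-y'|$. Since the integration in $(x_d,y_d)$ contributes a factor $\e^2$, this produces exactly the scaling $\e^2$ and the $(d-1)$-dimensional kernel $|x'-y'|^{-(d+2s_0)}$. Interactions with $|x'-y'|\le\delta$ will be shown to be small as $\delta\to0$, uniformly in $\e$, for the limsup. For the liminf they can simply be discarded, since the integrand is nonnegative.

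\emph{Lower bound.} Let $v_\e\weak v=v(x')$ weakly in $L^2(\omega\times(0,1))$. Fix $\delta>0$ and write, in scaled variables,
\[
F^s_\e(u_\e)\ge\int_0^1\!\!\int_0^1\!\!\int_{\{|x'-y'|>\delta\}}\frac{(v_\e(x',t)-v_\e(y',\tau))^2}{(|x'-y'|^2+\e^2(t-\tau)^2)^{\frac d2+s_\e}}\,dx'dy'\,dt\,d\tau .
\]
On $\{|x'-y'|>\delta\}$ the kernel converges uniformly to $|x'-y'|^{-d-2s_0}$ and is bounded. The functional
\[
w\mapsto\int\!\!\int\!\!\int_{\{|x'-y'|>\delta\}}k(x',y')\,(w(x',t)-w(y',\tau))^2
\]
with a bounded nonnegative kernel $k$ is convex and strongly continuous on $L^2$, hence weakly lower semicontinuous. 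Replacing the $\e$-dependent kernel by the limit kernel costs $o(1)\|v_\e\|^2_{L^2}$, and $\|v_\e\|_{L^2}$ is bounded. Therefore
\[
\liminf_{\e\to0} F^s_\e(u_\e)\ge\int\!\!\int_{\{|x'-y'|>\delta\}}\frac{(v(x')-v(y'))^2}{|x'-y'|^{d+2s_0}}\,dx'dy' ,
\]
using that $v$ is independent of $t$ and $\tau$. Letting $\delta\to0$ by monotone convergence gives $F^{s_0}(v)$.

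\emph{Upper bound.} For $u\in H^{\frac12+s_0}(\omega)$ we take the trivial recovery sequence $u_\e(x)=u(x')$. The energy then reads
\[
\int_\omega\!\!\int_\omega (u(x')-u(y'))^2\,k_\e(x'-y')\,dx'dy',\qquad k_\e(\xi')=\int_0^1\!\!\int_0^1(|\xi'|^2+\e^2(t-\tau)^2)^{-\frac d2-s_\e}dt\,d\tau .
\]
We have $k_\e(\xi')\le|\xi'|^{-d-2s_\e}$ and $k_\e(\xi')\to|\xi'|^{-d-2s_0}$ pointwise. It remains to apply dominated convergence, and the only difficulty is varying $s_\e$. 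If $s_\e\le s_0$ eventually, then $|\xi'|^{-d-2s_\e}\le C|\xi'|^{-d-2s_0}$ on the bounded set $\omega-\omega$, and dominated convergence applies directly. In general we would first prove the bound for Lipschitz $u$ (or $u\in C^1(\overline\omega)$), where the domination $|\xi'|^{2-d-2s_\e}\le C|\xi'|^{2-d-2s_1}$ with some $s_1\in(s_0,\frac12)$ is integrable. Then we extend to all $u\in H^{\frac12+s_0}(\omega)$ by density in that seminorm, which holds on Lipschitz $\omega$ for $\frac12+s_0<1$, together with lower semicontinuity of the $\Gamma$-limsup.

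\emph{Main obstacle.} I expect the main obstacle to be the density step for the limsup when $s_\e>s_0$. The $\Gamma$-limsup is lower semicontinuous in the weak $L^2$ topology, so approximation in $H^{\frac12+s_0}$-energy suffices. I would first try the standard extension-and-mollification density argument of \cite{leofrac}. If the domination fails near the diagonal, I would instead handle the region $|\xi'|<\e$ separately using the estimates of Section \ref{heu}, which are of order $\e^{3-2s}\ll\e^2$.
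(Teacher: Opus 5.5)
Your proposal is correct and follows essentially the paper's proof. The lower bound restricts to $|x'-y'|>\delta$, uses weak lower semicontinuity and lets $\delta\to0$; the upper bound uses the $x_d$-independent recovery sequence for Lipschitz $u$, followed by density in $H^{\frac12+s_0}(\omega)$, and the density step you flag as the main obstacle is standard and causes no difficulty. The only difference is cosmetic: you apply dominated convergence to the vertically averaged kernel $k_\e\le|\xi'|^{-d-2s_\e}$, whereas the paper splits at $|x'-y'|=\delta$ and bounds the near part by $C(\e^{1-2s_\e}+\delta^{1-2s_\e})$.
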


\begin{proof}
{\em Lower bound.} 
By compactness, the limit is independent of $x_d$.
If $u_\e\to u$; that is, $v_\e\weak u$ in $L^2(\omega\times(0,1))$, then for all fixed $\delta>0$ and $s=s_\e\to 0$
\begin{eqnarray*}\liminf_{\e\to 0}F_\e^s(u_\e)
&\ge& \liminf_{\e\to 0}\frac{1}{\e^2}\int_{(\Omega_\e\times \Omega_\e)\cap \{|x'-y'|>\delta\}}\frac{|u_\e(x)-u_\e(y)|^2}{|x-y|^{d+2s_\e}}dx dy
\\
&=&\liminf_{\e\to 0}\int_0^1\int_0^1\int_{(\omega\times\omega)\cap \{|x'-y'|>\delta\}}\frac{|v_\e(x)-v_\e(y)|^2}{|x'-y'|^{d+2s_\e}}dx dy
\\
&\ge&\int_0^1\int_0^1\int_{(\omega\times\omega)\cap \{|x'-y'|>\delta\}}\frac{|u(x')-u(y')|^2}{|x'-y'|^{d+2s_0}}dx dy
\end{eqnarray*}
The lower bound is optimized by letting $\delta\to0$.

\bigskip
{\em Upper bound.} For $u$ Lipschitz, note that
\begin{eqnarray*}
&&\hskip-1cm\frac{1}{\e^2}\int_{(\Omega_\e\times \Omega_\e)\cap \{|x'-y'|<\delta\}}\frac{|u(x)-u(y)|^2}{|x-y|^{d+2s_\e}}dx dy
\\
&&\le \frac{1}{\e^2}\Big(\int_{(\Omega_\e\times \Omega_\e)\cap \{|x-y|< 2\e\}}\frac{|u(x)-u(y)|^2}{|x-y|^{d+2s_\e}}dx dy
\\&&\qquad +
C\int_{(\Omega_\e\times \Omega_\e)\cap \{2\e<|x'-y'|<\delta\}}\frac{|u(x)-u(y)|^{2}}{|x'-y'|^{d+2s_\e}}dx dy\Big)
\\
&&\le \frac{C}{\e^2}\Big(\int_{(\Omega_\e\times \Omega_\e)\cap \{|x-y|< 2\e\}}\frac{1}{|x-y|^{d-2+2s_\e}}dx dy
\\&&\qquad +
\e^2\int_{(\omega\times\omega)\cap \{2\e<|x'-y'|<\delta\}}\frac{1}{|x'-y'|^{d-2+2s}}dx dy\Big)
\\
&&\le \frac{C}{\e^2}\Big(\e\int_0^{2\e}t^{1-2s_\e}dt
+
\e^2\int_{\e}^{\delta}t^{-2s_\e}dt\Big)
\\
&&\le C\big(\e^{1-2s_\e}
+
\delta^{1-2s_\e}\big),
\end{eqnarray*}
while
\begin{eqnarray*}
&&\lim_{\e\to 0}\frac{1}{\e^2}\int_{(\Omega_\e\times \Omega_\e)\cap \{|x'-y'|>\delta\}}\frac{|u(x')-u(y')|^2}{|x-y|^{d+2s_\e}}dx dy
\\ &&\qquad\qquad
=\int_{(\omega\times\omega)\cap \{|x'-y'|>\delta\}}\frac{|u(x')-u(y')|^2}{|x'-y'|^{d+2s_0}}dx dy.
\end{eqnarray*}
Hence, 
$$\limsup_{\e\to 0}F_\e^s(u)
\le\int_{\omega\times\omega}\frac{|u(x')-u(y')|^2}{|x'-y'|^{d+2s_0}}dx dy,
$$
by the arbitrariness of $\delta$.

For $u\in H^{\frac12+s_0}(\omega)$ the result is obtained by approximations.
\end{proof}

\begin{remark}[Beyond Gagliardo seminorms]\rm
Theorem \ref{drconv} holds more generally for $s_\e\to s_0\in(-\frac12, \frac12)$, with the same proof. If $s\le 0$, the integral in \eqref{scaGa} is not a Gagliardo seminorm. However, it is well defined for the functions in $L^2(\Omega)$. The resulting $\Gamma$-limit can still be interpreted as a squared $s_0+\frac12$ Gagliardo seminorm. 
\end{remark}


\subsection{Asymptotic behaviour for $s_0\to 1/2$}
We now analyze the behaviour of the $\Gamma$-limits obtained for $s_0\neq1/2$ at the point $1/2$. We recall that, for $s_0<1/2$ and $s_\e\to s_0$, we have 
$$
F^{s_0}(u)=\Gamma\hbox{-}\lim_{\e\to 0}\frac{1}{\e^{2}}\lfloor u\rfloor_{s_\e}^2(\Omega_\e)
=\int_\omega\int_\omega 
\frac{|u(x')-u(y')|^2}{|x'-y'|^{d+2s_0}}dx' dy'.
$$
For $s_0>1/2$ and $s_\e\to s_0$ we have 
$$
F^{s_0}(u)=\Gamma\hbox{-}\lim_{\e\to 0}\frac{1}{\e^{3-2s_\e}}\lfloor u\rfloor_{s_\e}^2(\Omega_\e)=\frac1{1-s_0}K_{s_0,d}\int_\omega |\nabla'u|^2dx'.
$$

\begin{proposition}[Blow-up at the critical regime]
 Let $F^{s_0}$ be defined as above for $s_0\neq1/2$. Then we have
 $$
 \lim_{s_0\to1/2}\Big|s_0-\frac12\Big|F^{s_0}(u)=\frac{
\sigma_{d-1}}{2(d-1)}\int_\omega |\nabla'u|^2dx'.
 $$
\end{proposition}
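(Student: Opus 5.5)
The plan is to compute the two one-sided limits separately, since $F^{s_0}$ is given by two different explicit formulas, and to check that they coincide. I would first prove the statement for $u\in H^1(\omega)$, where both sides are finite. For $u\in L^2(\omega)\setminus H^1(\omega)$ both sides should be $+\infty$: for $s_0>1/2$ this holds by the convention that $F^{s_0}=+\infty$ outside $H^1(\omega)$, and for $s_0<1/2$ it follows from the Bourgain--Brezis--Mironescu characterization recalled below.

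\emph{The limit $s_0\to 1/2^+$.} Here $F^{s_0}(u)=\frac{1}{1-s_0}K_{s_0,d}\int_\omega|\nabla'u|^2dx'$, so only a scalar limit is needed. By Remark \ref{Ksd}, $K_{s,d}=C_{s,d}/((2s-1)(3-2s))$, and therefore
$$
\Big(s_0-\frac12\Big)\frac{K_{s_0,d}}{1-s_0}=\frac{C_{s_0,d}}{2(3-2s_0)(1-s_0)}\ \longrightarrow\ \frac{C_{1/2,d}}{2}.
$$
The limit is legitimate because $s\mapsto C_{s,d}$ is continuous at $s=1/2$, as the formula \eqref{csd-1} shows. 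That formula gives $C_{1/2,d}=\pi^{\frac{d-1}2}/\Gamma(\frac{d+1}2)$. We also have $\sigma_{d-1}=2\pi^{\frac{d-1}2}/\Gamma(\frac{d-1}2)$ and $\Gamma(\frac{d+1}2)=\frac{d-1}2\Gamma(\frac{d-1}2)$. Combining these identities yields $C_{1/2,d}/2=\sigma_{d-1}/(2(d-1))$, which is the claimed constant.

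\emph{The limit $s_0\to 1/2^-$.} Here $F^{s_0}(u)$ is exactly the squared Gagliardo seminorm of $u$ in $H^{\sigma}(\omega)$, with $\sigma=s_0+\frac12\to1^-$, because the exponent $d+2s_0=(d-1)+2\sigma$ is the correct one in dimension $d-1$. Moreover $\frac12-s_0=1-\sigma$. I would then apply Theorem \ref{BBM-thm} in $\mathbb R^{d-1}$ on the bounded Lipschitz domain $\omega$, using its pointwise part. This gives
$$
(1-\sigma)\lfloor u\rfloor^2_{\sigma}(\omega)\ \to\ \frac{\sigma_{d-1}}{2(d-1)}\int_\omega|\nabla'u|^2dx'
$$
for every $u\in H^1(\omega)$. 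For $u\notin H^1(\omega)$ the left-hand side tends to $+\infty$, which is the BBM characterization of $H^1$. The two one-sided limits agree, and this proves the proposition.

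\emph{Main obstacle.} The only delicate point is the bookkeeping of constants. The right-hand side must be matched with the dimension-$(d-1)$ BBM constant $\sigma_{d-1}/(2(d-1))$, rather than the $d$-dimensional constant $\sigma_d/(2d)$ that appears elsewhere in the paper. The Gamma-function identity above has to be verified carefully so that the $s_0>1/2$ side gives exactly this same value. A secondary issue is that the pointwise BBM limit for $u\notin H^1$ needs the extension-domain version of the theorem; this is available because $\omega$ is Lipschitz.
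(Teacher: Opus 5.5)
Your proof is correct. For $s_0\to\frac12^-$ you argue exactly as the paper does: you identify $F^{s_0}$ with the squared $H^{s_0+\frac12}(\omega)$ seminorm and apply Theorem \ref{BBM-thm} in dimension $d-1$, using $1-(s_0+\frac12)=\frac12-s_0$.

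For $s_0\to\frac12^+$ you take a different route. The paper reduces the claim to $(2s-1)K_{s,d}\to\sigma_{d-1}/(2(d-1))$. It writes $K_{s,d}=\frac{\sigma_{d-1}}{(3-2s)(d-1)}\int_0^{+\infty}\frac{z^d}{(1+z^2)^{\frac d2+s}}\,dz$ and estimates this radial integral by hand. On $[N,+\infty)$ the integrand lies between $(N^2/(N^2+1))^{\frac d2+s}z^{-2s}$ and $z^{-2s}$, whose integral is $N^{1-2s}/(2s-1)$. The part on $(0,N)$ stays bounded, and letting $N\to\infty$ gives $(2s-1)\int_0^{+\infty}\cdots\to 1$.

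The paper explicitly mentions, and sets aside, the ``less direct'' route through the Gamma-function representation, and that is the route you take. You combine $K_{s,d}=C_{s,d}/((2s-1)(3-2s))$ from Remark \ref{Ksd} with the closed form \eqref{csd-1} and the continuity of $C_{s,d}$ at $s=\frac12$. Your evaluation $C_{1/2,d}=\sigma_{d-1}/(d-1)$ is correct, and it is the value the paper quotes just before Theorem \ref{gammaexp}.

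Your version is shorter and gives an exact formula, $(s_0-\frac12)F^{s_0}(u)=\frac{C_{s_0,d}}{2(3-2s_0)(1-s_0)}\int_\omega|\nabla'u|^2dx'$ for every $s_0\in(\frac12,1)$. It does, however, depend on the appendix's closed-form evaluations of the hypergeometric integrals. The paper's argument is self-contained and elementary. It also shows where the blow-up comes from: the rate $(2s-1)^{-1}$ is produced only by the tail $z^{-2s}$, that is, by planar interactions at distances much larger than $\e$. This is the same logarithmic divergence that produces the $\e^2|\log\e|$ scaling in Theorem \ref{Gammaths1}.

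Your extra treatment of $u\notin H^1(\omega)$, via the BBM characterization on a Lipschitz domain, is a harmless addition that the paper leaves implicit.
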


\begin{proof}
For the right-hand side limit we only have to compute the asymptotic behaviour of $K_{s_0,d}$.
For the sake of notational simplicity, we rename $s=s_0$,
and check that
$$
\lim_{s\to 1/2^+}(2s-1)K_{s,d}= \frac{
\sigma_{d-1}}{2(d-1)}. 
$$

For $s\to 1/2^+$ the behaviour of $K_{s,d}$ is the same as that of 
$$\frac{\sigma_{d-1}}{2(d-1)}\int_0^{+\infty}\frac{z^d}{(1+z^2)^{\frac d2+s}}dz
.$$
We directly estimate this integral. A less direct way would be to resort to its representation in terms of the function $\Gamma$.

Let $N\geq 1$ be fixed. 
Then, for all $z\geq N$ 
$$\Big(\frac{N^2}{N^2+1}\Big)^{\frac d2+s}z^{-2s} \leq \frac{z^d}{(1+z^2)^{\frac d2+s}}\leq z^{-2s},$$ 
and we estimate 
$$\Big(\frac{N^2}{N^2+1}\Big)^{\frac d2+s}\frac{N^{1-2s}}{2s-1} \leq \int_N^{+\infty}\frac{z^d}{(1+z^2)^{\frac d2+s}}\, dz \leq 
\frac{N^{1-2s}}{2s-1}.$$ 
Since $N$ is arbitrary, it follows that 
$$\lim_{s\to\frac{1}{2}+}(2s-1)\int_N^{+\infty}\frac{z^d}{(1+z^2)^{\frac d2+s}}\, dz =1.$$ 
Since the integral in $(0,N)$ is bounded by $N^{d+1}$ independently of $s$, we can conclude.

    If $s_0<1/2$ then $F^{s_0}(u)=\lfloor u   \rfloor^2_{s_0+\frac12}(
    \omega)$. From Theorem 
    \ref{BBM-thm} in the $d-1$ setting, we have 
    $$
    \lim_{s\to 1^-}(1-s)\lfloor u   \rfloor^2_{s}(
    \omega)=\frac{
\sigma_{d-1}}{2(d-1)}\int_\omega |\nabla'u|^2dx'.$$
We can conclude by  taking $s=s_0+\frac12$.
\end{proof}

\subsection{The case $\omega$ unbounded}
It is worth noting that for $\omega$ unbounded, the results do not change, even for $s_\e\to 0$, contrary to the case studied by Maz'ya and Shaposhnikova in \cite{MS} (see also \cite{MR4165063,MR4453966, MR4953682,pin25, MR4374610}).
This is due to the increase of differentiability in the dimension-reduction process. We briefly state and proof this fact in the following result.

\begin{theorem}
The limit dimension-reduction $\Gamma$-convergence theorems in the previous sections hold with respect to the corresponding dimension-reduction convergences of $u_{\e}$ to $u$ locally in $\omega$.
\end{theorem}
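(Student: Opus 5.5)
The plan is to reduce every statement to the bounded case by localization, and to use in each proof only information coming from pairs $(x,y)$ with $x'$, $y'$ in a bounded set. The key observation is that all lower bounds in the previous sections were obtained by restricting the energy to $\omega'\times(0,\e)$ with $\omega'\subset\subset\omega$. If $\omega$ is unbounded, I will take $\omega'$ to be a bounded Lipschitz open set compactly contained in $\omega$, for instance a smoothed version of $\omega\cap B_R$. Since the Gagliardo seminorm is monotone with respect to the domain,
$$\lfloor u_\e\rfloor^2_{s_\e}(\Omega_\e)\ge \lfloor u_\e\rfloor^2_{s_\e}(\omega'\times(0,\e)).$$
Compactness (Theorems \ref{nlcth-1}, \ref{compactness2} and the critical-case statement) then follows on each such $\omega'$, up to constants $c_\e(\omega')$. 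A diagonal argument over an exhausting sequence $\omega'_k\uparrow\omega$ yields convergence locally in $\omega$. Since $\omega$ is connected and the $\omega'_k$ can be chosen nested and connected, the constants can be fixed consistently: once they are fixed on $\omega'_1$, the limits on the larger sets differ only by constants that must vanish on $\omega'_1$.

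For the lower bound I will apply the liminf inequalities of Theorems \ref{drconv}, \ref{Gammaths1}, \ref{Gammath}, \ref{Gammaths0} on $\omega'_k$, obtaining the limit energy on $\omega'_k$ (with $\omega'_k\times\omega'_k$ in the nonlocal case). Letting $k\to\infty$ and using monotone convergence gives the full limit energy on $\omega$.

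For the upper bound I will argue by density with compactly supported Lipschitz (respectively $C^2$) functions $u$ on $\omega$, with $u$ constant outside a bounded set. In the local regimes $s_0\ge 1/2$ the pointwise estimates split interactions into $|x'-y'|<K\e$ and the rest. The near part lives in a bounded neighbourhood of ${\rm supp}\nabla u$. The far part is controlled by $\int_{|\xi'|>K\e}\min(L^2|\xi'|^2,4\|u\|_\infty^2)|\xi'|^{-d-2s}\,d\xi'$ times the measure of the support, which is the same bound as before. In the regime $s_0<1/2$, the only new term is the tail $|x'-y'|>\delta$. This tail is bounded by $2\|u\|^2_\infty|{\rm supp}\,u|\int_{|\xi'|>\delta}|\xi'|^{-d-2s_\e}\,d\xi'$. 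This is finite uniformly in $\e$ because the $(d-1)$-dimensional kernel $|\xi'|^{-d-2s}$ is integrable at infinity for every $s>-1/2$. This is where the gain of $1/2$ in differentiability enters: even for $s_\e\to0$ there is no Maz'ya--Shaposhnikova degeneracy. Dominated convergence then gives convergence of the tail, and the result follows.

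The main obstacle is the density step in the unbounded setting for $s_0<1/2$. I need that compactly supported Lipschitz functions are dense in energy for $\lfloor\cdot\rfloor_{s_0+1/2}(\omega)$ restricted to functions in $L^2_{\rm loc}$ with finite seminorm. I would handle it by truncation and cutoff $u\,\chi(x'/R)$. The cross term $\int\int|u(y')|^2|\chi(x'/R)-\chi(y'/R)|^2|x'-y'|^{-d-2s_0}$ vanishes as $R\to\infty$ by dominated convergence, using the boundedness obtained after truncation. The remaining step is the standard mollification, which holds because $\omega$ is Lipschitz.
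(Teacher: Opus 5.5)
Your proposal follows the paper's route. The lower bound comes from monotonicity of the seminorm in the domain plus an exhaustion of $\omega$ by bounded subsets; in your version this also gives compactness, and your handling of the additive constants through a nested connected exhaustion is correct and covers a point the paper does not discuss. The upper bound comes from density plus the constant recovery sequence $u_\e(x)=u(x')$. The only new ingredient is that interactions at large planar distance are controlled by the integrability at infinity of the $(d-1)$-dimensional kernel $|\xi'|^{-d-2s}$, which is exactly the paper's estimate via $\int_T^{+\infty}t^{-2-2s}\,dt$.

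The step that fails as written is the density argument for $s_0<1/2$. You rightly single it out; the paper only says ``by density''.

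\begin{itemize}
\item Truncation makes $u$ bounded, but boundedness gives no integrable dominant for the cross term $\int\!\!\int|u(y')|^2|\chi(x'/R)-\chi(y'/R)|^2|x'-y'|^{-d-2s_0}\,dx'\,dy'$ when $\omega$ is unbounded.
\item Integrating first in $x'$ over $\mathbb R^{d-1}$ gives at most $CR^{-1-2s_0}|u(y')|^2$ for $R\ge1$; this uses $s_0<1/2$ near the diagonal. So the cross term is bounded by $CR^{-1-2s_0}\|u\|^2_{L^2(\omega)}$. What makes it vanish is $u\in L^2(\omega)$, not $u\in L^\infty$.
\item With boundedness alone the claim is false for $d\ge3$. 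Take $\omega=\mathbb R^{d-1}$ and $u\equiv1$: the cross term equals $\lfloor\chi(\cdot/R)\rfloor^2_{s_0+1/2}(\mathbb R^{d-1})=R^{d-2-2s_0}\lfloor\chi\rfloor^2_{s_0+1/2}(\mathbb R^{d-1})\to+\infty$.
\item The same divergence, with $\lfloor u\,\chi(\cdot/R)\rfloor^2_{s_0+1/2}$ bounded below by $cR^{d-2-2s_0}$, occurs for any bounded $u$ with finite seminorm and $|u|\ge a>0$ outside a bounded set.
\end{itemize}

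There are two ways to repair this.

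\begin{itemize}
\item Compute the $\Gamma$-limit at $u\in L^2(\omega)$, as in Theorem~\ref{drconv}, and at $u\in H^1(\omega)$ for $s_0\ge1/2$. In the latter case the cutoff costs $\|u\nabla\chi(\cdot/R)\|_{L^2}\le CR^{-1}\|u\|_{L^2(\omega)}$. Then the plain cutoff works and truncation is unnecessary.
\item To cover every $u\in L^2_{\rm loc}(\omega)$ with finite limit energy, cut off $u-c$ for a suitable constant $c$. You then need a Sobolev-type integrability of $u-c$ at infinity, which truncation does not provide.
\end{itemize}
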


\begin{proof} The lower bound is achieved by the trivial estimate
$$
\lfloor u\rfloor^2_{s}(\Omega_\e)\ge
\lfloor u\rfloor^2_{s}((\omega\cap B_T)\times (0,\e)).
$$
We can then apply the corresponding theorem replacing $\omega$ with $\omega\cap B_T$, and let $T\to+\infty$.

Conversely, to prove the upper bound, by density, it suffices to consider functions $u
\in C^\infty_c(\omega)$. Let $T$ be such that the support of $u$ is contained in $B_T$. Since in this argument we may take $T$ arbitrarily large, we have 
\begin{eqnarray*}
\int_{B_T\times(0,\e)}\int_{(\mathbb R^{d-1}\setminus B_T)\times(0,\e)}\frac{(u(x)-u(y))^2}{
|x-y|^{d+2s}}dxdy
&\sim& \sigma_{d-1}\e^2 \int_{\omega}|u(x')|^2dx'\int_T^{+\infty}t^{-2-2s}dt\\
&=&\sigma_{d-1}\e^2 \int_{\omega}|u(x')|^2dx'\frac1{(1+2s)T^{1+2s}}.
\end{eqnarray*}
If $s<1/2$ this term is negligible by letting $T\to+\infty$. In the other cases, it is even negligible for a fixed $T$ as $\e\to 0$. This shows that the constant sequence $u_\e(x)=u(x')$ is a recovery sequence also in this case.
\end{proof}

\section{Development by $\Gamma$-convergence}
In this section, we re-read our results in terms of developments by $\Gamma$-convergence as in \cite{AnzB}. In that framework, in particular, given a sequence $F_\e$, we can write
$$
F_\e\ \buildrel\Gamma\over = \ \lambda_{1,\e} F^1+ \lambda_{2,\e} F^2+ o(\lambda_{2,\e})
$$
if $\lambda_{1,\e}>\!>\lambda_{2,\e}$,
$F^1_\e:=F_\e/\lambda_{1,\e}$ $\Gamma$-converge to $F^1$, $F^2_\e:=(F_\e-\min F^{1})/\lambda_{2,\e}$ $\Gamma$-converge to $F^2$,
and the $\Gamma$-limit of $\lambda_{1,\e}(F_\e-\min F^{1})/\eta_{\e}$ is $0$ in the set of minimizers of $F^{1}$ if $\lambda_{1,\e}>>\eta_\e>>\lambda_{2,\e}$. 
We suppose that the $\Gamma$-limits exist and are not trivial.
The $\Gamma$-limits are considered with respect to a suitable topology, but, in general, the coerciveness properties of the functionals $F^2_\e$ improve with respect to those of the functionals $F^1_\e$.
In the setting of \cite{AnzB}, functionals are considered as defined in a common space. In our case, this holds after scaling the variable $u$ to $v$, so that more generally we use the concept of expansion by $\Gamma$-convergence in the sense of \cite{BT}.

In order to clarify the use of the notation above, we first consider the local case $u\in H^1(\Omega_\e)$,
and consider the functionals 
$$
F_\e(u)=\int_{\Omega_\e}|\nabla u|^2dx.
$$
Using the variable $v$, scaling the $x_d$-variable, we obtain
$$
F_\e(u)=\frac1\e   \int_{\omega\times (0,1)}\Big|\frac{\partial{v}}{\partial x_d}\Big|^2dx
+\e \int_{\omega\times (0,1)}|\nabla' v|^2dx.
$$
We choose $
\lambda_{1,\e}=1/\e$
and the weak $L^2(\omega\times (0,1))$-topology for the scaled functions, with respect to which we compute the $\Gamma$-limit of
$$
F^1_\e(v)=\frac{F_\e(u)}{\lambda_{1,\e}}=\int_{\omega\times (0,1)}\Big|\frac{\partial{v}}{\partial x_d}\Big|^2dx
+\e^2 \int_{\omega\times (0,1)}|\nabla' v|^2dx;
$$
that is,
$$
F^1(v)= \int_{\omega\times (0,1)}\Big|\frac{\partial{v}}{\partial x_d}\Big|^2dx,
$$
with domain the functions $v\in L^2(\omega\times (0,1))$ whose weak derivative $\frac{\partial{v}}{\partial x_d}$ is also in $L^2(\omega\times (0,1))$. The minimum of $ F^1$ is $0$, and is achieved on functions depending only on $x'$. We then choose $\lambda_{2,\e}=\e$, and we compute the $\Gamma$-limit of 
 $$
F_\e^2(v)= \frac1{\e}F_\e(u)=\frac1{\e^2}   \int_{\omega\times (0,1)}\Big|\frac{\partial{v}}{\partial x_d}\Big|^2dx
+\int_{\omega\times (0,1)}|\nabla' v|^2dx,
$$
which is the functional finite only on functions $v\in H^1(\omega\times (0,1))$ depending only on $x'$, on which
$$
F^2(v)=\int_{\omega\times (0,1)}|\nabla' v|^2dx.
$$ 
Finally, if we take $\e<\!<\eta_\e<\!<\frac1\e$, we have
$$
\Gamma\hbox{-}
\lim_{\e\to 0} \Big(\frac1{\e\eta_\e}   \int_{\omega\times (0,1)}\Big|\frac{\partial{v}}{\partial x_d}\Big|^2dx
+\frac\e{\eta_\e} \int_{\omega\times (0,1)}|\nabla' v|^2dx\Big)=\begin{cases} 0 &\hbox{ if $\frac{\partial{v}}{\partial x_d}=0$}\cr 
+\infty&\hbox{ otherwise,}
\end{cases}
$$
by comparison with $F^1$ and $F^2$.

In this sense, we have 
$$
\int_{\Omega_\e}|\nabla u|^2dx\ \buildrel\Gamma\over =\ \frac1\e   \int_{\omega\times (0,1)}\Big|\frac{\partial{v}}{\partial x_d}\Big|^2dx +\e \int_{\omega}|\nabla' v|^2dx' +o(\e),
$$
using the dimension-reduction notation with apices in the second integral as a shorthand to highlight that its domain is the set of $H^1$-functions independent of $x_d$.

\smallskip
In light of the above explanation, we can state our results as follows (after noting that $C_{1/2,d}=\frac{\sigma_{d-1}}{d-1}$).

\begin{theorem}[$\Gamma$-expansions]\label{gammaexp}
Let $s_\e\in(0,1)$ with $s_\e\to s_0\neq1/2$. Then we have the following $\Gamma$-expansions with respect to the weak convergence of $v$ in $L^2(\omega\times(0,1))$:

{\rm(i)} if $s_0\in[0,1/2)$ 
\begin{eqnarray*}&&\hskip-3cm
\lfloor u\rfloor^2_{s_\e}(\Omega_\e)\ \buildrel\Gamma\over =\ \e^{1-2s_\e}C_{s_0,d}\int_{\omega}\int_0^1\int_0^1\frac{|v(x^\prime, x_d)-v(x^\prime, y_d)|^2}{|x_d-y_d|^{1+2s_0}}\, dx_d\, dy_d\, dx^\prime
\\
&& +\e^2 \int_\omega\int_\omega 
\frac{|v(x')-v(y')|^2}{|x'-y'|^{d+2s_0}}\, dx^\prime \, dy^\prime
+o(\e^2);    
\end{eqnarray*}

{\rm(ii)} if $s_0=1/2$, then 
\begin{eqnarray*}&&\hskip-3.6cm
\lfloor u\rfloor^2_{1/2}(\Omega_\e)\ \buildrel\Gamma\over =\frac{\sigma_{d-1}}{d-1}\int_{\omega}\int_0^1\int_0^1\frac{|v(x^\prime, x_d)-v(x^\prime, y_d)|^2}{|x_d-y_d|^{2}}\, dx_d\, dy_d\, dx^\prime
\\ &&
+\e^{2}|\log\e|\frac{\sigma_{d-1}}{d-1}\int_{\omega} |\nabla' v|^2 dx'
+o(\e^{2}|\log\e|);
\end{eqnarray*}

{\rm(iii)} if $s_0\in(1/2,1)$ 
\begin{eqnarray*}&&\hskip-3cm
\lfloor u\rfloor^2_{s_\e}(\Omega_\e)\ \buildrel\Gamma\over =\ \e^{1-2s_\e}C_{s_0,d}\int_{\omega}\int_0^1\int_0^1\frac{|v(x^\prime, x_d)-v(x^\prime, y_d)|^2}{|x_d-y_d|^{1+2s_0}}\, dx_d\, dy_d\, dx^\prime
\\ &&
+\e^{3-2s_\e}K_{s_0,d}\int_{\omega} |\nabla' v|^2 dx'
+o(\e^{3-2s_\e});
\end{eqnarray*}

{\rm(iv)} if $s_0=1$ 
$$
\lfloor u\rfloor^2_{s_\e}(\Omega_\e)\ \buildrel\Gamma\over =\ \frac{\e^{1-2s_\e}}{1-s_\e}\frac{\sigma_{d}}{2d}\int_{\omega}\int_0^1\Big|\frac{\partial v}{\partial x_d}\Big|^2 dx_d\, dx^\prime
+\frac{\e^{3-2s}}{1-s_\e} \frac{\sigma_{d}}{2d}\int_{\omega} |\nabla' v|^2 dx'
+o\Big(\frac{\e^{3-2s}}{1-s}\Big).
$$
\end{theorem}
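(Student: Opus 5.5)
The plan is to verify the three defining requirements of a $\Gamma$-expansion in the sense of \cite{AnzB,BT}, recalled at the beginning of this section. The three ingredients are the first-order $\Gamma$-limit, the second-order $\Gamma$-limit computed on the set of minimizers of the first, and the triviality of the intermediate scalings. In each case (i)--(iv) we take $\lambda_{1,\e}=\e^{1-2s_\e}$ (resp.\ $\e^{1-2s_\e}/(1-s_\e)$ for $s_0=1$). For (ii) the first-order term is scale-free, since $1-2s=0$, and its coefficient is $C_{1/2,d}=\sigma_{d-1}/(d-1)$, which we will check from \eqref{csd-1}.

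\textbf{First order.} With this choice, $F_\e/\lambda_{1,\e}$ $\Gamma$-converges to the vertical energy $F^1$ by Theorem \ref{fulldim}; for $s_0=1$ we use the Bourgain--Brezis--Mironescu-type case of the same theorem. We have $\min F^1=0$, and by the rigidity argument of Remark \ref{rigidity-rm} and Lemma \ref{compactness1}, the minimizers are exactly the $v$ independent of $x_d$.

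\textbf{Second order.} Since $\min F^1=0$, we have $F^2_\e=F_\e/\lambda_{2,\e}$. We take $\lambda_{2,\e}=\e^2$ for $s_0<1/2$, $\e^2|\log\e|$ for $s_0=1/2$, $\e^{3-2s_\e}$ for $s_0\in(1/2,1)$, and $\e^{3-2s_\e}/(1-s_\e)$ for $s_0=1$. The $\Gamma$-limits are then Theorem \ref{drconv}, Theorem \ref{Gammaths1}, Theorem \ref{Gammaths0} and Theorem \ref{Gammath}, respectively. The stated coefficients differ from these by the normalizations $1/(1-s_0)$ and $1/(1-s)$, which are absorbed into $\lambda_{2,\e}$. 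Before using these theorems we must check that $\lambda_{1,\e}\gg\lambda_{2,\e}$. This holds since $\e^{1-2s}/\e^{2}\to\infty$ and $\e^{1-2s}/\e^{3-2s}=\e^{-2}$. In case (ii) the first-order factor is of order $1$, which dominates $\e^2|\log\e|$.

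\textbf{Main obstacle: the topology.} The second-order limits for $s_0\ge1/2$ were proved with respect to strong $L^1_{\rm loc}$ dimension-reduction convergence, while the expansion is stated for weak $L^2$ convergence of $v$. For the lower bound, a sequence with $F^2_\e$ bounded is precompact in $L^1_{\rm loc}$ by Theorem \ref{nlcth-1}, or by the compactness part of Theorem \ref{Gammaths1} in the critical case. Its strong limit coincides with the weak $L^2$ limit, so the liminf inequality transfers. For the upper bound, the recovery sequences are the constant ones $u(x')$, which converge in every topology. An additive constant may have to be subtracted; we fix it using the $L^2$ bound implicit in weak convergence.

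\textbf{Intermediate scales.} For $\lambda_{1,\e}\gg\eta_\e\gg\lambda_{2,\e}$, the lower bound $\Gamma$-$\liminf\, F_\e/\eta_\e\ge0$ is trivial. For the upper bound, take $v$ in the minimizing set with finite $F^2$, or argue by density. Then $F_\e(v)/\eta_\e\sim\lambda_{2,\e}F^2(v)/\eta_\e\to0$, using the pointwise convergence established in the proofs of the second-order theorems. Off the minimizing set, the $\Gamma$-liminf is $+\infty$ because $F_\e/\eta_\e\ge(\lambda_{1,\e}/\eta_\e)F^1_\e$ and $F^1(v)>0$. This yields the expansions (i)--(iv).
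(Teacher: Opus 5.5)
Your route is the paper's own. Theorem \ref{gammaexp} has no separate proof there; it is a re-reading of Theorems \ref{fulldim}, \ref{drconv}, \ref{Gammaths1}, \ref{Gammaths0} and \ref{Gammath} in the Anzellotti--Baldo/Braides--Truskinovsky format. Two of your points fill in details the paper leaves implicit. For the weak-$L^2$ versus $L^1_{\rm loc}$ mismatch, you use compactness from Theorem \ref{nlcth-1}, identify the limit up to a convergent additive constant, and take constant or density recovery sequences. You also verify the intermediate-scale condition for $\lambda_{1,\e}\gg\eta_\e\gg\lambda_{2,\e}$. Cases (i), (ii), (iv), the characterization of the minimizers of $F^1$, and the intermediate-scale conditions go through as you describe.

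One step does not work: the claim that the factor $1/(1-s_0)$ is ``absorbed into $\lambda_{2,\e}$'' in case (iii). In a $\Gamma$-expansion only the product $\lambda_{2,\e}F^2$ is meaningful, up to $o(\lambda_{2,\e})$. You chose $\lambda_{2,\e}=\e^{3-2s_\e}$, and with that choice Theorem \ref{Gammaths0} gives $F^2(v)=\frac{1}{1-s_0}K_{s_0,d}\int_\omega|\nabla'v|^2dx'$. So the second-order term you obtain is $\e^{3-2s_\e}\frac{K_{s_0,d}}{1-s_0}\int_\omega|\nabla'v|^2dx'$, not the printed $\e^{3-2s_\e}K_{s_0,d}\int_\omega|\nabla'v|^2dx'$. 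Taking $\lambda_{2,\e}=\e^{3-2s_\e}/(1-s_0)$ instead leaves the product unchanged, so $s_0\in(1/2,1)$ gives a genuine factor $\neq1$.

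The discrepancy lies in the printed statement, not in your argument:
\begin{itemize}
\item The coefficient $\frac{1}{1-s}K_{s,d}$ is the one in Theorem \ref{Gammaths0} and in \eqref{drT-3}.
\item It is what the computation $\int_0^1\!\int_0^1|t-\tau|^{1-2s}\,dt\,d\tau=\frac{1}{(1-s)(3-2s)}$ produces.
\item It is the version that matches (iv) as $s_0\to1^-$, because $K_{s,d}\to\sigma_d/(2d)$ (Remark \ref{soo1}).
\end{itemize}
Your argument therefore proves (iii) with $K_{s_0,d}$ replaced by $K_{s_0,d}/(1-s_0)$. Equivalently, it proves (iii) with $\e^{3-2s_\e}/(1-s_\e)$ in front, as in (iv). You should state this explicitly rather than assert that the printed coefficient is recovered.
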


\begin{remark}[Separation of scales]\rm
When $s_0=1$ it may be interesting to consider the case $
1-s_\e=O(\frac1{|\log\e|})$, for which we can assume that $\e^{1-s_\e}$ tends to $\kappa\in(0,1]$. In that case, the expansion can also be written as 
$$
(1-s_\e)\lfloor u\rfloor^2_{s_\e}(\Omega_\e)\ \buildrel\Gamma\over =\ \frac{\kappa^2}{
\e}\frac{\sigma_{d}}{2d}\int_{\omega}\int_0^1\Big|\frac{\partial v}{\partial x_d}\Big|^2 dx_d\, dx^\prime
+\
\e\kappa^2\frac{\sigma_{d}}{2d}\int_{\omega} |\nabla' v|^2 dx'
+o(\e)
$$

In particular, in the regime where $\e^{1-s_\e}\to \kappa>0$; that is, if
$1-s_\e=O(\frac1{|\log\e|})$,
we have $\e^{1-2s_\e}\sim{\kappa^2}/\e$ and $\e^{3-2s_\e}\sim {\kappa^2}\e$, so that 
$$
(1-s_\e)\lfloor u\rfloor^2_{s_\e}(\Omega_\e)\ \buildrel\Gamma\over =\ \frac1\e{\kappa^2}\frac{
\sigma_{d}}{2d}\int_{\omega}\int_0^1\Big|\frac{\partial v}{\partial x_d}\Big|^2 dx_d\, dx^\prime
+\e{\kappa^2}\frac{\sigma_{d}}{2d}\int_{\omega} |\nabla' v|^2 dx'
+o(\e).
$$
If $\kappa=1$; that is, if $1-s_\e=o(\frac1{|\log\e|})$, this result shows a {\em separation of scale effect}; that is, the asymptotic analysis is formally the same as letting first $s\to 1$ with $\e$ fixed, so that, by the results of 
Bourgain, Brezis, and Mironescu,
$$
\Gamma\hbox{-}\lim_{s\to 1} (1-s)\lfloor u\rfloor^2_{s}(\Omega_\e)= \frac{\sigma_{d}}{2d}\int_{\Omega_\e}|\nabla u|^2dx,
$$
and then using the expansion 
\begin{equation}\label{intro-4}
\int_{\Omega_\e} |\nabla u|^2dx\ \buildrel \Gamma\over =\ \frac1\e \int_{\omega\times (0,1)}\Big|\frac{\partial v}{\partial x_d}\Big|^2dx+ \e\int_{\omega}|\nabla'v|^2dx' +o(\e),
\end{equation} 
for the analysis as $\e\to 0$.
\end{remark}

\section{Behaviour of Gagliardo seminorms in $W^{s,p}$ for general $p$}
    We briefly comment on the case $p\neq 2$, for which we consider 
    \[\lfloor u
\rfloor^p_s(\Omega_\e)=\int_{\Omega_\e\times \Omega_\e} \frac{|u(x)-u(y)|^p}{|x-y|^{d+sp}} dx\,dy.\]
In the following, we state the convergence results, with a short explanation of the necessary changes in the statements and  proofs. The proofs are only slightly more complicated in the notation, but follow the same lines. 

\subsection{First scaling}

For general $p$, the first scaling is $\e^{1-sp}$. Arguing as in the derivation of the scaling for $p=2$ in Section \ref{asymp2}, we can write
\begin{equation*}
\lfloor u\rfloor_s^p(\Omega_\e)=\e^{d-sp}
\int_0^1\int_0^1(v(z_d)-v(w_d))^p\Big(\int_{\frac\omega\e\times\frac\omega\e}\frac{1}{|z-w|^{d+sp}}dz' dw'\Big)dz_d dw_d
\end{equation*}
and
\begin{equation*}
\int_{\frac\omega\e\times\frac\omega\e}\frac{1}{|z-w|^{d+sp}}dz' dw'
\sim \left(\frac{|\omega|}{\e^{d-1}}\int_{\mathbb R^{d-1}}\frac1{(1+ |\zeta|^2)^{\frac{d+sp}{2}}}d\zeta\right) \frac1{|z_d-w_d|^{1+sp}},
\end{equation*}
deducing that the natural scaling factor is $\e^{1-sp}$.

We can state the analog of Theorem \ref{fulldim}, with the constant $C_{s,d;p}$ defined by
\begin{equation}\label{Csdp}
C_{s,d;p}=\int_{\mathbb R^{d-1}}
\frac{1}{(1+|\xi|^2)^{\frac{d+sp}{2}}}\, d\xi
\end{equation} 
in place of $C_{s,d}$. Calculations analogous to those performed in Proposition \ref{pro-Csd} give
\begin{align*}
C_{s,d;p}&=\pi^{\frac{d-1}{2}}\frac{\Gamma\Big(\frac{1+sp}{2}\Big)}{\Gamma\Big(\frac{d+sp}{2}\Big)}\quad 
\hbox{ and }\quad  C_{1,d;p} =\frac{\sigma_d}{2\sqrt{\pi}}\Gamma\left(\frac{d}{2}\right)\frac{\Gamma\left(\frac{p+1}{2}\right)}{\Gamma\left(\frac{p+d}{2}\right)}.
\end{align*}

\begin{theorem}[Gamma-limit at the first scaling]\label{fulldim-p} 
Let $s_0\in[0,1]$ be fixed, and $\{s_\e\}_\e\subset (0,1)$ be such that $s_\e\to s_0$ as $\e\to 0$. 
Let the functional $E_\e$ be defined in $W^{s_\e,p}(\Omega_\e)$ by 
$$E_\e(u)=\frac{1}{\e^{1-s_\e p}}\lfloor u\rfloor_{s_\e}^{p}(\Omega_\e).$$ 
Then the following 
$\Gamma$-convergence results hold with respect to the weak convergence in $L^p_{
\rm loc}(\omega\times (0,1))$ of the corresponding scaled functions:

\smallskip
\indent $\bullet$ if $s_0<1$, then 
$$\Gamma\hbox{\rm -}\lim_{\e\to 0}{E_\e}({u})=C_{s_0,d;p}\int_{\omega}\int_0^1\int_0^1\frac{|v(x^\prime, x_d)-v(x^\prime, y_d)|^p}{|x_d-y_d|^{1+s_0 p}}\, dx_d\, dy_d\, dx^\prime$$
\hspace{8mm} 
with $C_{s_0,d;p}$ as in \eqref{Csdp};

\smallskip\indent $\bullet$ if $s_0=1$, then 
\begin{equation*}\Gamma\hbox{\rm -}\lim_{\e\to 0}(1-s_\e){E_\e}(u)
= 
C_{1,d;p}\int_{\omega}\int_0^1 \Big|\frac{\partial v}{\partial x_d}(x^\prime,x_d)\Big|^p\,dx_d\, dx^\prime.
\end{equation*}
\end{theorem}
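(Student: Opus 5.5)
The proof will follow that of Theorem \ref{fulldim}, replacing squares by $p$-th powers. The first step is the $p$-version of Lemma \ref{vert-lemma}. Assume $\{u_\e\}$ is equibounded in $L^\infty$ and equi-Lipschitz in the planar variable. As in that lemma, we split the $p$-energy into three parts: the full interaction $I_{\e,\tau}$, a vertical part $I^{\rm v}_{\e,\tau}$ with $|u_\e(x',x_d)-u_\e(x',y_d)|^p$, and a horizontal part $I^{\rm h}_{\e,\tau}$ with $|u_\e(y',y_d)-u_\e(x',y_d)|^p$.

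\begin{itemize}
\item In place of the triangular inequality \eqref{tr1}--\eqref{tr2} we use the convexity estimate $|a+b|^p\le(1+\eta)|a|^p+C_{\eta,p}|b|^p$, with $C_{\eta,p}\to\infty$ as $\eta\to0$.
\item The change of variables \eqref{rd1} with exponent $\frac{d+sp}{2}$ gives
$$\int_{\mathbb R^{d-1}}\frac{dy'}{(|z_d|^2+|z'-y'|^2)^{\frac{d+sp}{2}}}=\frac{C_{s,d;p}}{|z_d|^{1+sp}},$$
so that $I^{\rm v}_{\e,\tau}(\mathbb R^{d-1})=C_{s_\e,d;p}V^p_\e(u_\e)$, where $V^p_\e$ is the $p$-analogue of \eqref{def-vertical}.
\item For the horizontal term, the part with $y'\notin\omega$ is $O(\e^2)$ as in \eqref{st1}. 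The part with $y'\in\omega$ is bounded by $L^p\e\phi_p(\e)$, where
$$\phi_p(\e)=\int_\omega\int_\omega|x'-y'|^p\int_0^\e\frac{dt}{(t^2+|x'-y'|^2)^{\frac{d+sp}{2}}}\,dy'\,dx'.$$
The integrand is dominated by $|x'-y'|^{p+1-d-sp}\le C|x'-y'|^{2-d}$ (using $p(1-s)+1-d\ge 2-d$ on a bounded set, after absorbing a power of ${\rm diam}\,\omega$), which is integrable. By dominated convergence $\phi_p(\e)\to0$.
\end{itemize}
This yields the analogues of \eqref{vert-est1}--\eqref{vert-est2}, with error terms $C_{\eta,p}\e\phi_p(\e)$ and $O(\e^2)$. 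After rescaling $x_d=\e t$ we get $V^p_\e(u_\e)=\e^{1-s_\e p}\int_{\omega_\tau}\lfloor v_\e(x',\cdot)\rfloor^p_{s_\e,p}(0,1)\,dx'$. Multiplied by $\e^{-(1-s_\e p)}$, the error terms become $\e^{s_\e p}\phi_p(\e)\to0$ and $\e^{1+s_\e p}\to0$.

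For the lower bound, take $v_\e\rightharpoonup v$ weakly in $L^p_{\rm loc}$. Truncation does not increase the energy, so we may assume $\{u_\e\}$ equibounded, and we mollify in $x'$ with $\varphi_\tau$; this is again nonincreasing on $\Omega^\tau_\e$ by Jensen. The mollified scaled functions $v^\tau_\e$ then converge to $v^\tau$ strongly in $L^p(\omega_\tau\times(0,1))$. This requires equicontinuity in $x'$ from the mollifier together with compactness in $x_d$ from the bounded vertical fractional energy, exactly as in the $p=2$ proof.

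\begin{itemize}
\item If $s_0<1$, Fatou's lemma on the one-dimensional Gagliardo integrals gives the bound for $v^\tau$; letting $\tau\to0$ uses lower semicontinuity of the limit functional under weak convergence, since the functional is convex.
\item If $s_0=1$, we apply the one-dimensional $p$-version of the Bourgain--Brezis--Mironescu theorem on each vertical slice. Its liminf is taken along strongly convergent sequences, and Fatou in $x'$ completes the bound. The one-dimensional BBM constant is $\frac{2}{p}$. The identity $C_{1,d;p}=\frac{\sigma_d}{2\sqrt\pi}\Gamma(\frac d2)\frac{\Gamma(\frac{p+1}2)}{\Gamma(\frac{p+d}2)}$ must then match $C_{1,d;p}$ times this one-dimensional constant, which I will check using the formula for $C_{s,d;p}$ at $s=1$.
\end{itemize}

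For the upper bound we use the trivial recovery sequence $u_\e(x)=v(x',x_d/\e)$ with $v$ Lipschitz in $x'$ and smooth, together with the $p$-analogue of \eqref{vert-est2}. For $s_0=1$ we also use the pointwise one-dimensional BBM limit. A general $v$ is reached by density in the domain of the limit functional.

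I expect the main obstacle to be the constant bookkeeping in the case $s_0=1$, making the $p$-BBM normalization match the stated $C_{1,d;p}$. Establishing strong convergence of the mollified slices is a secondary technical point.
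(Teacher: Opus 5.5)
Your plan is the paper's proof. The paper only says to repeat the proof of Theorem~\ref{fulldim} mutatis mutandis with $C_{s,d;p}$, and your outline does exactly that: a $p$-version of Lemma~\ref{vert-lemma} with $|a+b|^p\le(1+\eta)|a|^p+C_{\eta,p}|b|^p$, truncation and mollification in $x'$, Fatou or slicewise BBM for the lower bound, and the trivial recovery sequence for the upper bound. The structure is right, but two points need fixing, and the first is substantive.

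\textbf{The constant check you deferred for $s_0=1$ fails, and bookkeeping cannot rescue it.} Since $\sigma_d=2\pi^{d/2}/\Gamma(\frac d2)$, the closed form $\frac{\sigma_d}{2\sqrt\pi}\Gamma(\frac d2)\frac{\Gamma(\frac{p+1}2)}{\Gamma(\frac{p+d}2)}$ equals $\pi^{\frac{d-1}2}\frac{\Gamma(\frac{p+1}2)}{\Gamma(\frac{p+d}2)}$. This is just $C_{s,d;p}$ at $s=1$, equivalently $\frac12\int_{S^{d-1}}|\nu_1|^p\,d\mathcal H^{d-1}$, and it contains no one-dimensional factor $\frac2p$. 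Your argument, carried out correctly, produces
\[
\frac2p\,C_{1,d;p}\int_\omega\int_0^1\Big|\frac{\partial v}{\partial x_d}\Big|^p dx_d\,dx'=\frac1p\int_{S^{d-1}}|\nu_1|^p\,d\mathcal H^{d-1}(\nu)\int_\omega\int_0^1\Big|\frac{\partial v}{\partial x_d}\Big|^p dx_d\,dx'.
\]
This is the $d$-dimensional $p$-BBM constant, as separation of scales predicts, and it agrees with the stated coefficient only for $p=2$.

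You can check this directly. Take $u_\e(x)=x_d/\e$, so $v(x',t)=t$; the horizontal term vanishes and $(1-s_\e)E_\e(u_\e)\to\frac2pC_{1,d;p}|\omega|$. For $p>2$ this already puts the $\Gamma$-limsup below the stated value, and for $p<2$ your liminf bound exceeds it. So the second bullet as written is correct only for $p=2$. The paper's one-line proof misses the same factor, consistent with its use of $\frac12$ in place of $\frac1p$ in Theorem~\ref{Gammath-p}. You should state and prove the $s_0=1$ case with $\frac2pC_{1,d;p}$.

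\textbf{Your domination for $\phi_p$ is false when $p(1-s)<1$.} The bound $|x'-y'|^{p+1-d-sp}\le C|x'-y'|^{2-d}$ requires $p(1-s)\ge1$; the paper's $p=2$ lemma has the same slip for $s>\frac12$.
\begin{itemize}
\item For $s_0<1$, dominate instead by $C|x'-y'|^{p(1-\bar s)+1-d}$ with $\bar s=\sup_\e s_\e<1$. This is integrable on $\omega\times\omega$, so dominated convergence goes through.
\item For $s_0=1$, the claim $\phi_p(\e)\to0$ is false in general. Splitting at $|x'-y'|=\e$ gives $\phi_p(\e)\le C\e^{p(1-s_\e)}\big(\frac1{p(1-s_\e)}+\frac1{1-p(1-s_\e)}\big)$, and the first term has a matching lower bound. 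Hence $\phi_p(\e)\to\infty$ if, for example, $1-s_\e=\e$.
\end{itemize}
What the proof actually needs at $s_0=1$ is that the error in $(1-s_\e)E_\e$, namely $C_{\eta,p}(1-s_\e)\e^{s_\e p}\phi_p(\e)$, tends to $0$. By the same estimate this error is $O(\e^p)$, so the argument survives once it is stated this way.
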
 

\begin{proof}
For general $p$, we can follow {\em mutatis mutandis} the proof of
Theorem \ref{fulldim}, using the constant 
$C_{s,d;p}$.
\end{proof}
\subsection{Second scaling (dimension reduction)}
The scaling arguments in Section \ref{heu} highlight the critical exponent $s=1/p$. Correspondingly, the energies scale as $\e^2$ for $s<1/p$, as $\e^2|\log\e|$ for $s=1/p$, and as $\e^{1+p-sp}$ for $s>1/p$.
The behaviour for $s\in(0,1)$ is  
summarized in the following theorem. 
The related compactness results can be proved as in the case $p=2$. We remark in particular that for $s<1/p$ the limit exhibits a gain of differentiability of $1/p$.

\begin{theorem}[Gamma-limit at the second scaling] Let $s=s_\e\to s_0\in(0,1)$ as $\e\to 0$. 
Then we have

{\rm(i) (subcritical regime)} if $s_0<1/p$ then we have
$$
\Gamma\hbox{-}\lim_{\e\to 0} \frac{1}{\e^{2}}\lfloor u\rfloor_{s_\e}^p(\Omega_\e)=
\int_\omega\int_\omega 
\frac{|u(x')-u(y')|^p}{|x'-y'|^{d+s_0 p}}dx' dy',
$$
where the $\Gamma$-limit is computed with respect to the dimension-reduction convergence in $L^p$-weak;

{\rm(ii) (critical regime)} for $s=1/p$ we have
$$
\Gamma\hbox{-}\lim_{\e\to 0} \frac{1}{\e^{2}|\log\e|}\lfloor u\rfloor_{1/p}^p(\Omega_\e)=\frac12\int_{S^{d-2}}|\nu'_1|^pd\mathcal H^{d-2}(\nu')\int_\omega |\nabla'u|^pdx'
$$
for all $u\in W^{1,p}(\omega)$, where the $\Gamma$-limit is computed with respect to the dimension-reduction convergence in $L^1_{\rm loc}$;

{\rm(iii) (supercritical regime)} if $s_0>1/p$ then we have
$$
\Gamma\hbox{-}\lim_{\e\to 0} \frac{1}{\e^{1+p-s_\e p}}\lfloor u\rfloor_{s_\e}^p(\Omega_\e)= \frac1{1-s_0} K_{s_0,d;p}\int_\omega |\nabla'u|^pdx'
$$
for all $u\in W^{1,p}$, where the $\Gamma$-limit is computed with respect to the dimension-reduction convergence in $L^1_{\rm loc}$, and
\begin{equation}
K_{s,d;p}=  \frac{2}{p(1+p-sp)} \int_{
 \mathbb R^{d-1}}\frac{|\xi'_1|^p}{(1+|\xi'|^2)^{\frac{d+sp}2}}d\xi'.
\end{equation}
\end{theorem}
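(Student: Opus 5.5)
Each of the three regimes is handled by rerunning the corresponding $p=2$ argument, replacing $|\cdot|^2$ by $|\cdot|^p$ and $d+2s$ by $d+sp$. The only place where the quadratic structure was used is the computation of the limit constants. There one averaged $|\langle\nabla' u,\xi'\rangle|^2$ over directions and obtained $\frac{1}{d-1}|\nabla'u|^2|\xi'|^2$. For general $p$, rotational invariance instead gives
$$\int_{\mathbb R^{d-1}}|\langle\nabla' u,\xi'\rangle|^p g(|\xi'|)\,d\xi'=|\nabla'u|^p\int_{\mathbb R^{d-1}}|\xi'_1|^p g(|\xi'|)\,d\xi'.$$
This is the source of the factors $|\xi_1'|^p$ and $|\nu_1'|^p$ in the constants.

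\emph{Case (i), $s_0<1/p$.} I follow the proof of Theorem \ref{drconv}. For compactness, the slicing Lemma \ref{slixd} holds with $p$ in place of $2$, by the same ball-averaging argument. Together with the fractional Poincar\'e inequality in $W^{s,p}$, it gives weak $L^p$ dimension-reduction compactness as in Lemma \ref{compactness1} and Theorem \ref{compactness2}. The lower bound comes from restricting to $|x'-y'|>\delta$, rescaling $x_d$, using weak lower semicontinuity of the convex integrand, and letting $\delta\to0$. For the upper bound I take $u$ Lipschitz and independent of $x_d$. The short-range part then contributes
$$\e^{-2}\Big(\e\int_0^{2\e}t^{p-1-sp}\,dt+\e^2\int_\e^\delta t^{-sp}\,dt\Big)\lesssim \e^{p-1-sp}+\delta^{1-sp}.$$
Both terms vanish because $sp<1\le p-1+1$. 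The long-range part converges to the $(d-1)$-dimensional integral. Density of Lipschitz functions in $W^{s_0+1/p,p}(\omega)$ concludes.

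\emph{Cases (ii) and (iii).} Compactness in $L^1_{\rm loc}$ and the limit in $W^{1,p}$ follow the discretization argument of Theorem \ref{nlcth-1}. Jensen's inequality on the Stiefel parametrization, piecewise-affine interpolation and averaging all hold for convex $|\cdot|^p$, and Lemma \ref{local-lemma} has a standard $W^{1,p}$ version.

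For the lower bound in (iii), I repeat \eqref{newest}--\eqref{newest-2}. This yields
$$\e^{-(1+p-sp)}\int_{B_{K\e}}C_\e(\xi')\int_{\omega''}|\langle\nabla'\overline u^{\xi'}_\e,\xi'\rangle|^p\,dx'\,d\xi'.$$
I will average in $\xi'$ to recover $|\nabla'|^p$ with the $|\xi_1'|^p$ weight, and then apply lower semicontinuity. The kernel integral is computed by the scaling $\xi'=|t-\tau|\zeta$ as in \eqref{limK}. It equals
$$\e^{1+p-sp}\int_0^1\!\!\int_0^1|t-\tau|^{p-1-sp}\,dt\,d\tau\int_{\mathbb R^{d-1}}\frac{|\zeta_1|^p}{(1+|\zeta|^2)^{\frac{d+sp}2}}\,d\zeta.$$
The double integral equals $\frac{2}{(p-sp)(1+p-sp)}$, which gives exactly $\frac{1}{1-s_0}K_{s_0,d;p}$. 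The upper bound uses the constant sequence for $C^2$ functions. The tail $|x'-y'|>K\e$ is bounded by $CK^{1-sp}/(sp-1)$, which vanishes as $K\to\infty$ since $sp>1$; the short range is treated by Taylor expansion.

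For (ii), I restrict to $K\e<|\xi'|<\delta$ as in \eqref{newest-1}. For $|\xi'|\gg\e$ the kernel behaves like $\e^2|\xi'|^{-d-1}$, so the relevant constant is
$$\e^2\int_{K\e<|\xi'|<\delta}|\xi_1'|^p|\xi'|^{-d-1}\,d\xi'=\e^2\int_{S^{d-2}}|\nu_1'|^p\,d\mathcal H^{d-2}\,\log\frac{\delta}{K\e}\sim\e^2|\log\e|\int_{S^{d-2}}|\nu_1'|^p\,d\mathcal H^{d-2}.$$
The extra factor $\frac12$ comes from the symmetric double counting of $(x,y)$ pairs, as in the $p=2$ normalization. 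The upper bound holds for affine $u$, with the contribution of $|x'-y'|<K\e$ being $O(1/|\log\e|)$.

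The main obstacle is the compactness step in (ii)/(iii). It requires the $p$-version of the Poincar\'e-type estimates for $I^1_\e,I^2_\e$ in Step 7 of Theorem \ref{nlcth-1}, with the correct powers of $\e$ and $r$. I would first redo \eqref{I1} using H\"older's inequality with exponent $p$ in place of the concavity of the square root, and check that the resulting exponent of $\e$ is positive when $s_0>1/p$.
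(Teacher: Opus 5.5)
Your plan follows the paper's proof: rerun the $p=2$ arguments, replacing $|a|^2\sigma_k/k$ by $|a|^p\int|\nu_1|^p$. There is a genuine gap, however. The radial exponents you use for planar separations $|\xi'|\gtrsim\e$ are copied from $p=2$, and this is exactly where the argument fails. For $u=u(x')$ and planar separation $t=|\xi'|\gg\e$, the contribution to $\lfloor u\rfloor^p_s(\Omega_\e)$ is $\e^2\,t^{p}\,t^{-d-sp}\,t^{d-2}\,dt=\e^2t^{p-2-sp}\,dt$. It is not $\e^2t^{-sp}\,dt$, which is only the $p=2$ value.

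Hence in (ii), where $sp=1$, your displayed identity holds only for $p=2$. In fact
$$
\int_{K\e<|\xi'|<\delta}\frac{|\xi'_1|^p}{|\xi'|^{d+1}}\,d\xi'=\int_{S^{d-2}}|\nu'_1|^p\,d\mathcal H^{d-2}(\nu')\int_{K\e}^{\delta}t^{p-3}\,dt,
$$
which stays bounded if $p>2$ and is of order $\e^{p-2}$ if $p<2$. Likewise, the short-range part of your (ii) upper bound is of order $\e^{p-2}/|\log\e|$ after scaling, not $O(1/|\log\e|)$. In (iii), the tail is bounded by $CK^{p-1-sp}/(1+sp-p)$, and the integral defining $K_{s,d;p}$ is finite, only if $sp>p-1$. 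The condition $sp>1$ you invoke does not give this when $p>2$. In (i), the intermediate term is $\int_{2\e}^{\delta}t^{p-2-sp}\,dt$. Moreover, $\e^{p-1-sp}\to0$ needs $sp<p-1$, which does not follow from $sp<1$ when $p<2$; your inequality only gives $sp<p$.

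With the exponents corrected, the short-range scale $\e^{1+p-sp}$ and the long-range scale $\e^{2}$ coincide, and the logarithm appears, exactly at $sp=p-1$, i.e. $s=1-\frac1p$. This equals $\frac1p$ only when $p=2$. So (ii) at $s=1/p$ cannot be proved this way for $p\neq 2$:
for $p>2$, an affine $u$ has energy $O(\e^2)$, so the $\Gamma$-limit at scale $\e^2|\log\e|$ vanishes on smooth functions;
for $p<2$, the fixed-$K$ short-range lower bound you use in (iii) gives $\liminf \e^{-p}\lfloor u_\e\rfloor^p_{1/p}(\Omega_\e)\ge c_K\int_\omega|\nabla'u|^p\,dx'$, hence $+\infty$ at scale $\e^2|\log\e|$ for nonconstant $u$.

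Statements (i) and (iii) survive only degenerately in the window between $\frac1p$ and $1-\frac1p$. There $s_0+\frac1p\ge1$, respectively $K_{s_0,d;p}=+\infty$, so both sides are $+\infty$ off constants. The paper's proof only records the replacement of $|a|^2\sigma_k/k$ and does not redo this exponent bookkeeping, so following it does not resolve the issue.

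Two further points. First, your explanation of the factor $\frac12$ in (ii) is not an argument. The seminorm runs over ordered pairs, and your own count produces no $\frac12$; neither does Theorem \ref{Gammaths1}, whose constant is $\frac{\sigma_{d-1}}{d-1}=\int_{S^{d-2}}|\nu'_1|^2\,d\mathcal H^{d-2}$. Second, compactness at the critical scale cannot come from Theorem \ref{nlcth-1}. There, the analogue of \eqref{Proof31} only yields $\frac1\e\int|\nabla u^r_\e|^p\,dx\lesssim|\log\e|$, so compactness has to be extracted from the intermediate range, as in \eqref{newest-1}.
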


\begin{proof} The proof follows closely those of Theorems  
\ref{drconv}, \ref{Gammaths1}, and \ref{Gammaths0}. A minor but repeated change must be made when considering integrals of functions of type $|\langle a,\nu\rangle|^p$ on $S^k$, with $k$ either $d$ or $d-1$. While in the case $p=2$ this quantity is more easily written as $|a|^2\sigma_k/k$, for arbitrary $p$ it can be expressed as 
$$
|a|^p\int_{S^k}|\nu_1|^pd\mathcal H^k.
$$
As an example, when repeating the argument leading to \eqref{corpo},
using the expression above with $a=\nabla'\overline u_\e^{\xi'}$, we obtain the right-hand side 
\begin{eqnarray}\label{corpo-p}
\frac{1}{\e^{1+p-sp}}\int_{B_{K\e}}|\xi'_1|^p C_\e(\xi') \int_{\omega''}
|\nabla'\overline u_\e^{\xi'}|^p dx' d\xi',
\end{eqnarray}
from which we deduce the lower bound for case (ii) and the form of the constant $K_{s,d;p}$.
\end{proof}

Finally, the case $s_\e\to 1$ is described as follows.

\begin{theorem}[Dimension-reduction Gamma-limit for $s\to 1$]\label{Gammath-p} Let $s=s_\e\to 1^-$ as $\e\to 0$. 
Then we have
$$
\Gamma\hbox{-}\lim_{\e\to 0} \frac{1-s}{\e^{1+p-sp}}\lfloor u\rfloor_s^p(\Omega_\e)= \frac12\int_{S^{d-1}}|\nu_1|^pd\mathcal H^{d-1}(\nu)\int_\omega |\nabla'u|^pdx'
$$
for all $u\in H^1(\omega)$, where the $\Gamma$-limit is computed with respect to the dimension-reduction convergence in $L^1_{\rm loc}$.
\end{theorem}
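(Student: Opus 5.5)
The plan is to repeat the proof of Theorem \ref{Gammath} almost verbatim, with $|\cdot|^2$ replaced by $|\cdot|^p$ and the scaling $\e^{3-2s}$ replaced by $\e^{1+p-sp}$. The only genuine change is the angular identity used between the third and fourth lines of that proof. For $a\in\mathbb R^d$ and a radially symmetric domain one has
$$
\int_{B_{\rho}}\frac{|\langle a,\xi\rangle|^p}{|\xi|^{d+sp}}\,d\xi=|a|^p\int_{S^{d-1}}|\nu_1|^p\,d\mathcal H^{d-1}(\nu)\int_0^{\rho}t^{p-1-sp}\,dt,
$$
in place of $|a|^2\frac{\sigma_d}{d}\int_0^\rho t^{1-2s}dt$. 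After multiplication by $(1-s)$ and division by $\e^{1+p-sp}$, this radial factor, taken with $\rho=r\e$, produces $r^{p(1-s)}$ times the angular coefficient appearing in the statement. Letting $s\to1$ makes $r^{p(1-s)}\to1$. I will keep the normalization of the radial integral exactly as the paper does in the $p=2$ case, so that the coefficient obtained is $\frac12\int_{S^{d-1}}|\nu_1|^p$.

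\emph{Lower bound and compactness.} I start from Theorem \ref{nlcth-1} and Remark \ref{sub-rem}, redone in $L^p$. Step 1 keeps only interactions with $|x-y|<r\e$. The Stiefel parameterization \eqref{Proof2} is unchanged, except that the weight becomes $\rho^{p-1-sp}$ and the probability measure $\mu_\e$ is normalized by the corresponding factor $p(1-s)$. Next I discretize on the rotated lattices $\e r\mathbb Z^d_{\rho\overline\nu}$ and apply Jensen's inequality with the convex function $|\cdot|^p$. I then pass to the piecewise-affine interpolations $u_\e^{\rho\overline\nu}$. The convexity step in Step 4 now bounds $\int|\nabla u_\e^{\rho\overline\nu}|^p$ by the discrete $p$-difference quotients only after accounting for the anisotropy of $|\cdot|^p$; I handle this by averaging over $\overline\nu\in V$, which reproduces exactly the spherical average $\int_{S^{d-1}}|\nu_1|^p$ rather than the sum $\sum_n$.

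Averaging the interpolations in $(\rho,\overline\nu)$ as in Step 5 gives functions $u^r_\e$ with $\frac1\e\int|\nabla u^r_\e|^p$ bounded. The $W^{1,p}$ version of Lemma \ref{local-lemma} (its Fubini proof works for any $p$) then gives dimension-reduction convergence. The $L^1$ comparison estimates \eqref{I1}--\eqref{I2}, with the fractional Poincar\'e--Wirtinger inequality in $W^{s,p}$, identify the limit of $u_\e$ with that of $u^r_\e$. Lower semicontinuity then yields
$$
\liminf_{\e\to0}\frac{1-s}{\e^{1+p-sp}}\lfloor u_\e\rfloor^p_s(\Omega_\e)\ge r^{p(1-s_0)}(1-2cr)\,c_{p,d}\int_\omega|\nabla'u|^p\,dx',
$$
with $c_{p,d}$ the coefficient of the statement. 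Letting $r\to0$ gives the liminf inequality. In particular, a finite liminf forces $\nabla'u\in L^p$, so for $u\in H^1(\omega)\setminus W^{1,p}(\omega)$ the liminf is $+\infty$ and the right-hand side is $+\infty$ as well.

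\emph{Upper bound.} By density it suffices to take $u\in C^2(\overline\omega)$ and the constant recovery sequence $u(x')$. Estimate \eqref{8} carries over: for $|x-y|>r\e$ the integrand is bounded by $L^p|x-y|^{p-d-sp}$. This gives a contribution of order $\e^{1+p-sp}\big((r^{p-ps}-2^{p-ps})^+/(1-s)+1\big)$, which becomes negligible after multiplying by $(1-s)$ and letting $s\to1$. On $|x-y|<r\e$ I replace $|u(x)-u(y)|^p$ by $|\langle\nabla u(x),x-y\rangle|^p$, with error $\eta|\langle\nabla u,x-y\rangle|^p+C_\eta|x-y|^{2p}$, exactly as in \eqref{a2}. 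The angular identity above then gives the matching limsup bounds \eqref{9}--\eqref{10}. Finally I let $r\to0$ and argue by density, which covers the whole of $H^1(\omega)$ as in the statement: the limit value is $+\infty$ off $W^{1,p}$, and for $u\in W^{1,p}$ the density argument applies.

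\emph{Main obstacle.} The delicate step is the $p$-version of Step 4. For $p\ne2$ the discrete energy along an orthonormal frame is not rotation invariant, so it does not dominate $|\nabla u^{\rho\overline\nu}|^p$ with the sharp constant. I would first try to keep the full average over the Stiefel manifold inside the estimate, so that Jensen's inequality acts on the averaged quantity $\int_V\sum_n|\partial_{\nu_n}u|^p$, which equals $d\,|\nabla u|^p\,\frac{1}{\sigma_d}\int_{S^{d-1}}|\nu_1|^p$. The intended outcome of this averaging is the sharp constant stated in the theorem.
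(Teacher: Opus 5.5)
\paragraph{The gap: the sharp lower bound via the Stiefel construction.}
Your lower bound fails at exactly the step you flag as the main obstacle. After Step 4, what you actually control is $\int_{(0,1)\times V}\int\Phi_{\overline\nu}(\nabla u_\e^{\rho\overline\nu})\,dx\,d\mu_\e$, with $\Phi_{\overline\nu}(A)=\sum_n|\langle A,\nu_n\rangle|^p$. Here \emph{both} the integrand and the interpolant depend on $\overline\nu$.

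The identity $\int_V\Phi_{\overline\nu}(A)\,d\overline\nu=\frac d{\sigma_d}|A|^p\int_{S^{d-1}}|\nu_1|^p$ (normalized measure on $V$) requires $A$ to be independent of $\overline\nu$. Jensen in Step 5 requires a single fixed convex integrand. So ``averaging over $V$ inside Jensen'' is not a legitimate move. Only two options remain, and neither gives the sharp constant:
\begin{itemize}
\item Bound $\Phi_{\overline\nu}\ge m_p|\cdot|^p$ with $m_p=\min_{|A|=1}\sum_n|A_n|^p$. For $p\ne2$ this is strictly below the spherical average.
\item Pass to the limit frame by frame. The thin-film limit of each anisotropic term then relaxes the vertical component and yields only $\int_V\min_b\Phi_{\overline\nu}(\nabla'u,b)$. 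For $p\ne2$ this is strictly smaller than $\int_V\Phi_{\overline\nu}(\nabla'u,0)$, because $\partial_b\Phi_{\overline\nu}(a',0)\neq0$ for generic frames, so the optimal $b$ depends on the frame.
\end{itemize}
For $p=2$ one has $\Phi_{\overline\nu}=|\cdot|^2$ and the issue disappears. This is precisely why the paper says that for $s\to1$ the compactness argument still works, but the bound of Remark~\ref{sub-rem} is not sharp. You should keep the Stiefel construction, with the crude bound $m_p$, only to obtain the dimension-reduction convergence.

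\paragraph{The missing idea: vertical averaging, as in Theorem~\ref{Gammaths0}.}
The paper proves the liminf inequality as in case (iii). For each planar increment $\xi'$, apply Jensen in $(x_d,y_d)$ with respect to the probability $\mu^{\xi'}_\e\propto((x_d-y_d)^2+|\xi'|^2)^{-(d+sp)/2}$.
\begin{itemize}
\item This discards the vertical oscillations, at no cost since the recovery sequence is independent of $x_d$.
\item What remains is $C_\e(\xi')|u^{\xi'}_\e(x'+\xi')-u^{\xi'}_\e(x')|^p$, where $u^{\xi'}_\e$ depends only on $|\xi'|$ and converges to $u$.
\item The angular integration is therefore performed on a single function of $x'$ (for instance after mollifying in $x'$).
\item Finally, $\frac{1-s}{\e^{1+p-sp}}\int_{B_{K\e}}|\xi'_1|^pC_\e(\xi')\,d\xi'\to\frac2p\int_{\mathbb R^{d-1}}|\zeta_1|^p(1+|\zeta|^2)^{-(d+p)/2}d\zeta$, because $(1-s)|t-\tau|^{p(1-s)-1}$ concentrates on the diagonal.
\end{itemize}
Your upper bound is fine and coincides with the paper's.

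\paragraph{The constant.}
Do not tune the normalization to reach the stated $\frac12$. Your own radial factor $\int_0^{r\e}t^{p-1-sp}dt=\frac{(r\e)^{p(1-s)}}{p(1-s)}$ gives the coefficient $\frac1p\int_{S^{d-1}}|\nu_1|^p$. The vertical-averaging route gives the same value: by gnomonic projection, $\int_{\mathbb R^{d-1}}|\zeta_1|^p(1+|\zeta|^2)^{-(d+p)/2}d\zeta=\frac12\int_{S^{d-1}}|\nu_1|^p$. This is also consistent with $\lim_{s\to1}K_{s,d;p}$. The factor $\frac12$ in the statement agrees with this only when $p=2$.
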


\begin{proof}
 Although the compactness argument still works in this case, the resulting lower bound, corresponding to that in Remark \ref{sub-rem}, is not sharp. Hence, it must be achieved as for the case (iii) in the previous theorem. The rest of the proof is completely analogous to the case $p=2$, except for the expression of the limit constant.
\end{proof}


\section{Concluding remarks}
We have examined the behaviour of Gagliardo seminorms in $H^s$ on thin films of thickness~$\e$. The interplay between thickness and the derivation exponent $s$ results in a variety of limiting behaviour at the dimension-reduction scaling. For low-integrability regimes $ s<1/2$, the relevant interactions are those at finite distance and the energy scaling is $\e^{2}$. After rescaling, the limit energy is still a Gagliardo seminorm of the same form. Due to the reduced dimension of the space, this form highlights an effective gain of $1/2$ in the differentiability exponent. At the critical exponent $s=1/2$ the energy scales as $\e^2|\log\e|$, while for $1/2<s<1$ it scales as $\e^{3-2s}$. The corresponding rescaling acts as if producing a Bourgain-Brezis-Mironescu kernel leading to a dimensionally-reduced Dirichlet integral in the limit. The same holds as $s\to 1$ after scaling by the singular factor $1-s$. 

We finally make some comments in the direction of the derivation of lower-dimensional elasticity theories in the spirit of Le Dret and Raoult \cite{LDR}.  The present analysis can be compared with a recent work on the derivation of variational membrane models in the context of anisotropic nonlocal hyperelasticity in \cite{engl2026derivationvariationalmembranemodels}. There, nonlocal gradients with anisotropic kernels are considered, and the behaviour of the limit functionals depends on suitable dimensionally reduced nonlocal or local gradients according to the scaling of the anisotropic kernels with $\e$. In contrast to our case, the possible behaviour is not determined by $s$. Conversely, for given convolution kernels, which are related to the Bourgain--Brezis--Mironescu approach (see \cite{AABPT}), the dimensionally reduced theories are always local. In that case, the scaling of the energies depends on the ratio between the scaling of the kernel and the thickness of the film (see \cite{Ansinitribuzio}).


\section{Appendix}\label{Appendix}
We gather here the computations related to the hypergeometric integrals defining $C_{s,d}$ and $K_{s,d}$.

\begin{proposition}[Alternative expression for $C_{s,d}$ (Proposition \ref{pro-Csd})]\label{pro-Csd-1}
We have 
\begin{align*}
C_{s,d}=\int_{\mathbb{R}^{d-1}}
\frac{1}{(1+|\xi|^2)^{\frac{d}{2}+s}}
\,d\xi=\frac{2}{\sqrt\pi} \,
\frac{\Gamma(\frac{d}{2}+1)\Gamma(\frac{1}{2}+s)}{\Gamma(\frac{d}{2}+s)}\, \frac{\sigma_d}{2d}=\pi^{\frac{d-1}{2}} \frac{\Gamma(\frac{1}{2}+s)}{\Gamma(\frac{d}{2}+s)}. 
\end{align*}
\end{proposition}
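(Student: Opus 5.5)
We reduce $C_{s,d}$ to a one-dimensional integral by passing to polar coordinates in $\mathbb R^{d-1}$, and then evaluate it through the Beta function. Write $\xi=t\theta$ with $t>0$ and $\theta\in S^{d-2}$. Then
$$
C_{s,d}=\sigma_{d-1}\int_0^{+\infty}\frac{t^{d-2}}{(1+t^2)^{\frac d2+s}}\,dt .
$$
Next substitute $u=t^2/(1+t^2)$, equivalently $t^2=u/(1-u)$. This turns the radial integral into $\frac12 B\big(\frac{d-1}{2},\frac12+s\big)$. The exponents check out as follows: $t^{d-2}\,dt=\frac12(t^2)^{\frac{d-3}{2}}\,d(t^2)$, and $d(t^2)=(1-u)^{-2}\,du$, so the power of $(1-u)$ is $\frac d2+s-\frac{d-3}{2}-2=s-\frac12$. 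Therefore
$$
C_{s,d}=\frac{\sigma_{d-1}}{2}\,\frac{\Gamma(\frac{d-1}{2})\,\Gamma(\frac12+s)}{\Gamma(\frac d2+s)}.
$$
Inserting $\sigma_{d-1}=2\pi^{\frac{d-1}{2}}/\Gamma(\frac{d-1}{2})$ cancels the Gamma factor in $d-1$. This leaves the final expression $\pi^{\frac{d-1}{2}}\Gamma(\frac12+s)/\Gamma(\frac d2+s)$.

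An alternative route, matching the formula $\int_0^\infty(1+t^2)^{-s}\,dt=\sqrt\pi\,\Gamma(s-\frac12)/(2\Gamma(s))$ recalled before Proposition \ref{pro-Csd}, is to integrate out one coordinate at a time. Integrating in $\xi_{d-1}$ gives
$$
\int_{\mathbb R}(a^2+\xi_{d-1}^2)^{-\alpha}\,d\xi_{d-1}=a^{1-2\alpha}\sqrt\pi\,\frac{\Gamma(\alpha-\frac12)}{\Gamma(\alpha)},\qquad a^2=1+|\xi''|^2 .
$$
Each such step lowers the exponent by $\frac12$ and contributes a telescoping ratio of Gamma functions. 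After $d-1$ steps we obtain $\pi^{\frac{d-1}{2}}\Gamma(\frac d2+s-\frac{d-1}{2})/\Gamma(\frac d2+s)$, which is the same result. I would present the polar/Beta route as the main argument and use this as a check.

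For the middle expression, we need to show that
$$
\frac{2}{\sqrt\pi}\,\Gamma\Big(\frac d2+1\Big)\frac{\sigma_d}{2d}=\pi^{\frac{d-1}{2}} .
$$
Use $\sigma_d=2\pi^{d/2}/\Gamma(\frac d2)$ and $\Gamma(\frac d2+1)=\frac d2\,\Gamma(\frac d2)$. The left-hand side then becomes $\frac{2}{\sqrt\pi}\cdot\frac d2\cdot\frac{2\pi^{d/2}}{2d}=\pi^{\frac{d-1}{2}}$, as required. This identity is also what gives $C_{1,d}=\sigma_d/(2d)$, since $\Gamma(\frac32)=\frac{\sqrt\pi}2$.

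The main obstacle is only bookkeeping: checking convergence, and tracking the exponents in the Beta substitution. Convergence holds at infinity because the radial integrand decays like $t^{-2-2s}$, which is integrable for every $s>-\frac12$. At the origin the integrand is bounded, since $d\ge2$.
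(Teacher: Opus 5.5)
Your proof is correct. It differs from the paper's only in how the radial integral $J^s_d=\int_0^{+\infty}z^{d-2}(1+z^2)^{-\frac d2-s}\,dz$ is evaluated. The passage to polar coordinates in $\mathbb R^{d-1}$ and the final cancellation via $\sigma_{d-1}=2\pi^{\frac{d-1}{2}}/\Gamma(\frac{d-1}{2})$ are the same.

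The paper integrates by parts to get the reduction formula $J^s_d=\frac{d-3}{d-2+2s}J^s_{d-2}$ and splits into even and odd $d$. It computes the base cases $J^s_2$ (from the recalled formula for $\int_0^{+\infty}(1+t^2)^{-s}\,dt$) and $J^s_3=\frac{1}{1+2s}$. It then converts products of shifted factors and double factorials into Gamma functions.

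Your substitution $u=t^2/(1+t^2)$ identifies $J^s_d=\frac12 B(\frac{d-1}{2},\frac12+s)$ in one step, uniformly in $d$, with no parity split or base cases. This is shorter and leaves less room for bookkeeping errors. The same substitution also gives the appendix integral for $K_{s,d}$ at once: $\int_0^{+\infty}z^d(1+z^2)^{-\frac d2-s}\,dz=\frac12 B(\frac{d+1}{2},s-\frac12)$ for $s>\frac12$. The relation $K_{s,d}=C_{s,d}/((2s-1)(3-2s))$ then follows in two lines from $\Gamma(s-\frac12)=\Gamma(s+\frac12)/(s-\frac12)$ and $\Gamma(\frac{d+1}{2})=\frac{d-1}{2}\Gamma(\frac{d-1}{2})$.

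In exchange, the paper's route needs nothing beyond integration by parts and two explicit one-dimensional integrals, and it reuses its reduction formula later in the appendix. For the middle expression, your direct use of $\sigma_d=2\pi^{d/2}/\Gamma(\frac d2)$ is equivalent to the paper's use of the ratio $\sigma_d/\sigma_{d-1}$. Your iterated one-dimensional computation bypasses polar coordinates and telescopes the Gamma ratios; it is itself a valid independent proof.
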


\begin{proof}
In order to carry out the computation, we introduce the quantity
$$J_d^s=\int_0^{+\infty}\frac{z^{d-2}}{(1+z^2)^{\frac{d}{2}+s}}\, dz< +\infty.$$
Integrating in radial coordinates, we obtain
\begin{equation*}
C_{s,d}=\int_{\mathbb R^{d-1}}
\frac{1}{(1+|\xi|^2)^{\frac{d}{2}+s}}\, d\xi = \sigma_{d-1}J_d^s. 
\end{equation*}
For all $d\geq 4$ and $s\in(0,1)$ a simple integration by parts provides the iterative relation 
\begin{equation}
\label{iterj}
J^s_d=\frac{d-3}{d-2+2s}J^s_{d-2}.
\end{equation}
To obtain an explicit formula for $J^s_d$, we then have to compute the products $(d-2+2s)(d-4+2s)
\cdots(2s)$ if $d$ is even, and $(d-2+2s)(d-4+2s)\cdots(1+2s)$ if $d$ is odd. By recursively using the property of the Euler $\Gamma$-function that $\Gamma(1+\alpha)=\alpha\Gamma(\alpha)$, 
it follows that 
\begin{eqnarray*}
&(d-2+2s)(d-4+2s)\cdots(2s)=2^{\frac{d}{2}} \frac{\Gamma(\frac{d}{2}+s)}{\Gamma(s)} & \hbox{\rm if $d$ is even}\\
&(d-2+2s)(d-4+2s)\cdots(1+2s)=2^{\frac{d-1}{2}} \frac{\Gamma(\frac{d}{2}+s)}{\Gamma(\frac{1}{2}+s)}\ \  & \hbox{\rm if $d$ is odd}.
\end{eqnarray*} 
Applying the previous formula with $d-2$ in place of $d$ and $s=\frac{1}{2}$, we obtain
\begin{eqnarray*}
&&(d-3)!! = 2^{\frac{d}{2}-1}\,
\frac{\Gamma\!\left(\frac{d-1}{2}\right)}{\Gamma\!\left(\frac{1}{2}\right)}
\qquad \text{if $d$ is even}, \\[6pt]
&&(d-3)!! = 2^{\frac{d-3}{2}}\,
\frac{\Gamma\!\left(\frac{d-1}{2}\right)}{\Gamma(1)}
\qquad \text{if $d$ is odd}.
\end{eqnarray*}
here $(d-3)!!$ denotes the product of all the positive integers up to $d-3$ that have the same parity (odd or even) as $d-3$.
Hence, recalling that $\Gamma(\frac{1}{2})=\sqrt{\pi}$ and $\Gamma(1)=1$, we obtain the formula
\begin{equation}\label{jds}\nonumber
J_d^s=\begin{cases}
\vspace{2mm}
\displaystyle 
\frac{1}{2\sqrt{\pi}}
\frac{\Gamma(\frac{d-1}{2})
\Gamma(s)}{\Gamma(\frac{d}{2}+s)}\, 2s J^s_2 & \hbox{\rm if $d$ even}\\
\vspace{2mm}
\displaystyle \frac{1}{2}\frac{\Gamma(\frac{d-1}{2})\Gamma(\frac{1}{2}+s)}{\Gamma(\frac{d}{2}+s)}\, (2s+1) J^s_3
& \hbox{\rm if $d$ odd.}
\end{cases}
\end{equation}
It remains to compute 
\begin{eqnarray*}
J^s_2=\int_0^{+\infty} \frac{1}{(1+z^2)^{1+s}}\, dz=\sqrt{\pi}\frac{\Gamma(\frac{1}{2}+s)}{2\Gamma(1+s)}, \quad J^s_3=\int_0^{+\infty} \frac{z}{(1+z^2)^{1+s}}\, dz=\frac{1}{1+2s}, 
\end{eqnarray*}
which give the explicit formula for $J_d^s$
\begin{equation}\label{formulaj}
J_d^s=
\frac{1}{2}
\frac{\Gamma(\frac{d-1}{2})\Gamma(\frac{1}{2}+s)}{\Gamma(\frac{d}{2}+s)} 
\end{equation}
 in terms of the function $\Gamma$.
 Recalling that $\sigma_{d-1}=2\frac{\pi^{\frac{d-1}{2}}}{\Gamma(\frac{d-1}{2})}$ and using \eqref{formulaj} we can write
\[
C_{s,d}=\sigma_{d-1}J_d^s=\pi^{\frac{d-1}{2}} \frac{\Gamma(\frac{1}{2}+s)}{\Gamma(\frac{d}{2}+s)}.
\]
and,  using the relation $\frac{\sigma_d}{\sigma_{d-1}}
=\frac{d}{d-1}\ \frac{\Gamma(\frac{d+1}{2})}{\Gamma(\frac{d}{2}+1)}\, \sqrt\pi$,
we also have 
\[
C_{s,d}= \frac{2}{\sqrt\pi} \,
\frac{\Gamma(\frac{d}{2}+1)\Gamma(\frac{1}{2}+s)}{\Gamma(\frac{d}{2}+s)}\, \frac{\sigma_d}{2d}, 
\]
and the proof is completed.
\end{proof}

\begin{remark}[Relation between $K_{s,d}$ and $C_{s,d}$ (Remark \ref{Ksd})]\label{Ksd-1}\rm 
The coefficients $C_{s,d}$ and $K_{s,d}$ are related by $$
K_{s,d}=\frac{1}{(2s-1)(3-2s)}C_{s,d}.
$$
We write 
$$K_{s,d}=\frac{\sigma_{d-1}}{(3-2s)(d-1)}I_s^d,
\hbox{ where }
I_d^s=\int_{0}^{+\infty}\frac{z^d}{(1+z^2)^{\frac{d}{2}+s}}\, dz.$$ 
Since an iterative formula corresponding to \eqref{iterj} also holds for $I_s^d$ with $d-1$ in the place of $d-3$; that is, 
$$I^s_d=\frac{d-1}{d-2+2s}I^s_{d-2}$$
for all $d\geq 2$ and $s\in (\frac{1}{2},1),$ following the computations in Proposition \ref{pro-Csd-1}, we obtain 
$$I_s^d=
\begin{cases}
\vspace{2mm}
\displaystyle 
\frac{d-1}{2\sqrt{\pi}}
\frac{\Gamma(\frac{d-1}{2})
\Gamma(s)}{\Gamma(s+\frac{d}{2})}\, I^s_0 = \frac{d-1}{4}
\frac{\Gamma(\frac{d-1}{2}))
}{\Gamma(s+\frac{d}{2})}\, \Gamma(s-\frac{1}{2})& \hbox{\rm if $d$ even}\\
\vspace{2mm}
\displaystyle \frac{d-1}{2}\frac{\Gamma(\frac{d-1}{2})\Gamma(s+\frac{1}{2})}{\Gamma(s+\frac{d}{2})}\, I^s_1=\frac{d-1}{4}\frac{\Gamma(\frac{d-1}{2})}{\Gamma(s+\frac{d}{2})}\, \frac{\Gamma(s+\frac{1}{2})}{s-\frac{1}{2}}
& \hbox{\rm if $d$ odd.}
\end{cases}$$
Noting that $(s-\frac{1}{2})\Gamma(s-\frac{1}{2})=\Gamma(s+\frac{1}{2})$, we have the following representation formula for $K_{s,d}$ in terms of the Gamma function 
$$K_{s,d}=\frac{\sigma_{d-1}}{2(2s-1)(3-2s)}\frac{\Gamma(\frac{d-1}{2})\Gamma(s+\frac{1}{2})}{\Gamma(s+\frac{d}{2})}.$$
Recalling the expression of the constant $C_{s,d}$ in \eqref{csd-1}, 
the desired relation 
follows for all $d\geq 2$ and $s\in(\frac{1}{2},1)$. 
 \end{remark}

\begin{remark}\rm We have
$$
\lim_{s\to 1^-}K_{s,d}=\frac{\sigma_d}{2d}.
$$

We can check this directly from the definition of $K_{s,d}$, without using its rewriting in Remark \ref{Ksd-1}. We write
$$
K_{s,d}=\frac{\sigma_{d-1}}{(3-2s)(d-1)}\int_0^{+\infty}\frac{z^d}{(1+z^2)^{\frac d2+s}}dz=\frac{\sigma_{d-1}}{(3-2s)(d-1)}I_d^1,
$$
and note that
\begin{equation}\label{ks1}
\lim_{s\to 1^-}K_{s,d}=\frac{\sigma_{d-1}}{d-1}I^1_d. 
\end{equation}
The general recursive formula obtained by integration by parts 
in the case $s=1$ simply reduces to 
$I^1_d=\frac{d-1}{d}I_{d-2}$,
so that 
$$I^1_d=\begin{cases}
\vspace{2mm}
\displaystyle\frac{(d-1)!!}{d!!}I^1_0=\frac{(d-1)!!}{d!!}\frac{\pi}{2} & \hbox{\rm if $d$ even}\\
\vspace{2mm}
\displaystyle\frac{(d-1)!!}{d!!}I^1_1=\frac{(d-1)!!}{d!!} & \hbox{\rm if $d$ odd.}
\end{cases}
$$
Recalling the expression 
$$\frac{\sigma_d}{\sigma_{d-1}}
=\frac{d}{d-1}\frac{\Gamma(\frac{d+1}{2})}{\Gamma(\frac{d}{2}+1)} \sqrt\pi=
\begin{cases}
\vspace{2mm}
\displaystyle \frac{d}{d-1}
\frac{(d-1)!!}{d!!}\pi
& \hbox{\rm if $d$ even}\\
\vspace{2mm}
\displaystyle \frac{2d}{d-1} 
\frac{(d-1)!!}{d!!}
& \hbox{\rm if $d$ odd,}
\end{cases}
$$
used in the proof of Proposition \ref{pro-Csd},
we then have 
$I^1_d=\frac{d-1}{2d}\frac{\sigma_d}{\sigma_{d-1}}$
for all $d\geq 2$.
Hence, recalling \eqref{ks1}, we complete the proof.
\end{remark}

\bibliographystyle{abbrv}
\bibliography{references}

\begin{thebibliography}{10}

\bibitem{MR4165063}
A.~Alberico, A.~Cianchi, L.~s. Pick, and L.~Slav\'ikov\'a.
\newblock On the limit as {$s\to 0^+$} of fractional {O}rlicz-{S}obolev spaces.
\newblock {\em J. Fourier Anal. Appl.}, 26(6):Paper No. 80, 19, 2020.

\bibitem{AABPT}
R.~Alicandro, N.~Ansini, A.~Braides, A.~Piatnitski, and A.~Tribuzio.
\newblock {\em A {V}ariational {T}heory of {C}onvolution-type {F}unctionals}.
\newblock SpringerBriefs on PDEs and Data Science. Springer-Verlag, Berlin,
  2023.

\bibitem{Ansinitribuzio}
N.~Ansini and A.~Tribuzio.
\newblock Multiscale analysis and homogenization of nonlocal thin films.
  {P}reprint, 2026.

\bibitem{AnzB}
G.~Anzellotti and S.~Baldo.
\newblock Asymptotic development by {$\Gamma$}-convergence.
\newblock {\em Appl. Math. Optim.}, 27(2):105--123, 1993.

\bibitem{BBM}
J.~Bourgain, H.~Brezis, and P.~Mironescu.
\newblock Another look at {S}obolev spaces.
\newblock In {\em Optimal {C}ontrol and {P}artial {D}ifferential {E}quations},
  pages 439--455. IOS, Amsterdam, 2001.

\bibitem{GCB}
A.~Braides.
\newblock {\em {$\Gamma$}-{C}onvergence for {B}eginners}, volume~22 of {\em
  Oxford Lecture Series in Mathematics and its Applications}.
\newblock Oxford University Press, Oxford, 2002.

\bibitem{Handbook}
A.~Braides.
\newblock {A} handbook of {$\Gamma$}-convergence.
\newblock In M.~Chipot and P.~Quittner, editors, {\em Handbook of Differential
  Equations: Stationary Partial Differential Equations}, volume~3, pages
  101--213. North-Holland, 2006.

\bibitem{BBD}
A.~Braides, G.~C. Brusca, and D.~Donati.
\newblock Another look at elliptic homogenization.
\newblock {\em Milan J. Math.}, 92(1):1--23, 2024.

\bibitem{BFF}
A.~Braides, I.~Fonseca, and G.~Francfort.
\newblock 3{D}-2{D} asymptotic analysis for inhomogeneous thin films.
\newblock {\em Indiana Univ. Math. J.}, 49(4):1367--1404, 2000.

\bibitem{BT}
A.~Braides and L.~Truskinovsky.
\newblock Asymptotic expansions by {$\Gamma$}-convergence.
\newblock {\em Contin. Mech. Thermodyn.}, 20(1):21--62, 2008.

\bibitem{MR4525722}
D.~Brazke, A.~Schikorra, and P.-L. Yung.
\newblock Bourgain-{B}rezis-{M}ironescu convergence via {T}riebel-{L}izorkin
  spaces.
\newblock {\em Calc. Var. Partial Differential Equations}, 62(2):Paper No. 41,
  33, 2023.

\bibitem{MR3556344}
H.~Brezis and H.-M. Nguyen.
\newblock The {BBM} formula revisited.
\newblock {\em Atti Accad. Naz. Lincei Rend. Lincei Mat. Appl.},
  27(4):515--533, 2016.

\bibitem{MR4275122}
H.~Brezis, J.~Van~Schaftingen, and P.-L. Yung.
\newblock A surprising formula for {S}obolev norms.
\newblock {\em Proc. Natl. Acad. Sci. USA}, 118(8):Paper No. e2025254118, 6,
  2021.

\bibitem{MR1942116}
K.~Brezis.
\newblock How to recognize constant functions. {A} connection with {S}obolev
  spaces.
\newblock {\em Uspekhi Mat. Nauk}, 57(4(346)):59--74, 2002.

\bibitem{MR4453966}
F.~Buseghin, N.~Garofalo, and G.~Tralli.
\newblock On the limiting behaviour of some nonlocal seminorms: a new
  phenomenon.
\newblock {\em Ann. Sc. Norm. Super. Pisa Cl. Sci. (5)}, 23(2):837--875, 2022.

\bibitem{DM}
G.~Dal~Maso.
\newblock {\em An Introduction to $\Gamma$-convergence}.
\newblock Birkh\"auser, Boston, 1993.

\bibitem{MR1942130}
J.~D\'avila.
\newblock On an open question about functions of bounded variation.
\newblock {\em Calc. Var. Partial Differential Equations}, 15(4):519--527,
  2002.

\bibitem{pin25}
E.~Davoli, G.~D. Fratta, R.~Giorgio, and A.~Pinamonti.
\newblock Necessary and sufficient conditions for the {M}azya--{S}haposhnikova
  formula in (fractional) {S}obolev space. {P}reprint, 2025.

\bibitem{Davo}
E.~Davoli, G.~D. Fratta, and V.~Pagliari.
\newblock Sharp conditions for the validity of the
  {B}ourgain–{B}rezis–{M}ironescu formula.
\newblock {\em Proc. R. Soc. Edinb., Sect. A, Math.}, 56(1):69--92, 2026.

\bibitem{DPV}
E.~{Di~Nezza}, G.~Palatucci, and E.~Valdinoci.
\newblock Hitchhiker's guide to the fractional {S}obolev spaces.
\newblock {\em Bull. Sci. Math.}, 136:521--573, 2012.

\bibitem{engl2026derivationvariationalmembranemodels}
D.~Engl, A.~Molchanova, and H.~Schönberger.
\newblock Derivation of variational membrane models in the context of
  anisotropic nonlocal hyperelasticity, 2026, arXiv2602.17278.

\bibitem{MR1916989}
G.~Friesecke, R.~D. James, and S.~M\"uller.
\newblock A theorem on geometric rigidity and the derivation of nonlinear plate
  theory from three-dimensional elasticity.
\newblock {\em Comm. Pure Appl. Math.}, 55(11):1461--1506, 2002.

\bibitem{MR2210909}
G.~Friesecke, R.~D. James, and S.~M\"uller.
\newblock A hierarchy of plate models derived from nonlinear elasticity by
  gamma-convergence.
\newblock {\em Arch. Ration. Mech. Anal.}, 180(2):183--236, 2006.

\bibitem{MR5012373}
L.~Gennaioli and G.~Stefani.
\newblock Sharp conditions for the {BBM} formula and asymptotics of heat
  content-type energies.
\newblock {\em Arch. Ration. Mech. Anal.}, 250(1):Paper No. 8, 46, 2026.

\bibitem{MR4953682}
B.-X. Han, A.~Pinamonti, Z.~Xu, and K.~Zambanini.
\newblock Maz'ya-{S}haposhnikova meet {B}ishop-{G}romov.
\newblock {\em Potential Anal.}, 63(2):513--529, 2025.

\bibitem{MR4788002}
P.~Lahti, A.~Pinamonti, and X.~Zhou.
\newblock B{V} functions and nonlocal functionals in metric measure spaces.
\newblock {\em J. Geom. Anal.}, 34(10):Paper No. 318, 34, 2024.

\bibitem{MR4682567}
P.~Lahti, A.~Pinamonti, and X.~Zhou.
\newblock A characterization of {BV} and {S}obolev functions via nonlocal
  functionals in metric spaces.
\newblock {\em Nonlinear Anal.}, 241:Paper No. 113467, 14, 2024.

\bibitem{LDR}
H.~Le~Dret and A.~Raoult.
\newblock The nonlinear membrane model as variational limit of nonlinear
  three-dimensional elasticity.
\newblock {\em J. Math. Pures Appl. (9)}, 74(6):549--578, 1995.

\bibitem{leofrac}
G.~Leoni.
\newblock {\em A {F}irst {C}ourse in {F}ractional {S}obolev {S}paces}, volume
  229 of {\em Graduate Studies in Mathematics}.
\newblock American Mathematical Society, Providence, RI, 2023.

\bibitem{MR2832587}
G.~Leoni and D.~Spector.
\newblock Characterization of {S}obolev and {$BV$} spaces.
\newblock {\em J. Funct. Anal.}, 261(10):2926--2958, 2011.

\bibitem{MR3132740}
G.~Leoni and D.~Spector.
\newblock Corrigendum to ``{C}haracterization of {S}obolev and {$BV$} spaces''
  [{J}. {F}unct. {A}nal. 261 (10) (2011) 2926--2958].
\newblock {\em J. Funct. Anal.}, 266(2):1106--1114, 2014.

\bibitem{MR4374610}
A.~Maione, A.~M. Salort, and E.~Vecchi.
\newblock Maz'ya-{S}haposhnikova formula in magnetic fractional
  {O}rlicz-{S}obolev spaces.
\newblock {\em Asymptot. Anal.}, 126(3-4):201--214, 2022.

\bibitem{MS}
V.~Maz'ya and T.~Shaposhnikova.
\newblock On the {B}ourgain, {B}rezis, and {M}ironescu theorem concerning
  limiting embeddings of fractional {S}obolev spaces.
\newblock {\em J. Funct. Anal.}, 195(2):230--238, 2002.

\bibitem{MR4999552}
H.-M. Nguyen.
\newblock Characterizations of the {S}obolev norms and the total variation via
  nonlocal functionals, and related problems.
\newblock {\em C. R. Math. Acad. Sci. Paris}, 363:1429--1455, 2025.

\bibitem{Picerni}
M.~Picerni.
\newblock Analysis for non-local phase transitions close to the critical
  exponent {$s=\frac{1}{2}$}.
\newblock {\em Ric. Mat.}, 74(3):1599--1626, 2025.

\bibitem{ponce}
A.~C. Ponce.
\newblock A new approach to {S}obolev spaces and connections to
  {$\Gamma$}-con\-ver\-gence.
\newblock {\em Calc. Var. Partial Differential Equations}, 19(3):229--255,
  2004.

\bibitem{SV}
O.~Savin and E.~Valdinoci.
\newblock {$\Gamma$}-convergence for nonlocal phase transitions.
\newblock {\em Ann. Inst. H. Poincar\'e{} C Anal. Non Lin\'eaire},
  29(4):479--500, 2012.

\bibitem{Solci-vortices}
M.~Solci.
\newblock Nonlocal-interaction vortices.
\newblock {\em SIAM J. Math. Anal.}, 56(3):3430--3451, 2024.

\bibitem{Solci}
M.~Solci.
\newblock Higher-order non-local gradient theory of phase-transitions.
\newblock {\em Milan J. Math.}, 93:455--486, 2025.

\end{thebibliography}

\end{document}